\newcommand{\ipbtobf}[2]{\left\langle#1,#2\right\rangle_{\mathcal{F}_{h}^{\partial}}}
\newcommand{\iipbfb}[2]{\left\langle#1,#2\right\rangle_{\mathcal{F}_h}}
\newcommand{\gbold}{\bm{g}}
\newcommand{\nbold}{\bm{n}}
\newcommand{\ubold}{\bm{u}}
\newcommand{\vbold}{\bm{v}}
\newcommand{\wbold}{\bm{w}}
\newcommand{\xbold}{\bm{x}}
\newcommand{\ipbt}[2]{\left\langle#1,#2\right\rangle_{\partial \mathcal{T}_h}}
\newcommand{\ipbtkf}[2]{\left\langle#1,#2\right\rangle_{F}}
\newcommand{\ipbf}[2]{\left\langle#1,#2\right\rangle_{\mathcal{F}_h}}
\newcommand{\iipbf}[2]{\left\langle#1,#2\right\rangle_{\mathcal{F}_h^i}}
\newcommand{\llbracket}{\left[\!\left[}
\newcommand{\rrbracket}{\right] \! \right]}
\newcommand{\llcurve}{\{\!\!\{}
\newcommand{\rrcurve}{\}\!\!\}}
\begin{document}
\newtheorem{remark}{Remark}
\newtheorem{corollary}{Corollary}
\newtheorem{lemma}{Lemma}
\newtheorem{theorem}{Theorem}
\newtheorem{proof}{Proof}
\def\proof{\par\noindent{\bf Proof.\ } \ignorespaces}
\def\endproof{}
\begin{frontmatter}

\title{A unified framework of continuous and discontinuous Galerkin methods for solving the incompressible Navier--Stokes equation}

%% Group authors per affiliation:
\author[mainaddress]{Xi Chen\corref{mycorrespondingauthor}}
\cortext[mycorrespondingauthor]{Corresponding author: xbc5027@psu.edu (Xi Chen)}

\author[mainaddress1]{Yuwen Li}\ead{yuwenli925@gmail.com}

\author[mainaddress2]{Corina Drapaca}\ead{csd12@psu.edu}

\author[mainaddress]{John Cimbala}\ead{jmc6@psu.edu}

\address[mainaddress]{Department of Mechanical Engineering, The Pennsylvania State
University, University Park, PA 16802, USA}

\address[mainaddress1]{Department of Mathematics, The Pennsylvania State University, University Park, PA 16802, USA}

\address[mainaddress2]{Department of Engineering Science and Mechanics, The Pennsylvania State
University, University Park, PA 16802, USA}

\begin{abstract}
In this paper, we propose a unified numerical framework for the time-dependent incompressible Navier--Stokes equation which yields the $H^1$-, $H(\text{div})$-conforming, and discontinuous Galerkin methods with the use of different viscous stress tensors and penalty terms for pressure robustness. Under minimum assumption on Galerkin spaces, the semi- and fully-discrete stability is proved when a family of implicit Runge--Kutta methods are used for time discretization. Furthermore, we present a unified discussion on the penalty term. Numerical experiments are presented to compare our schemes with classical schemes in the literature in both unsteady and steady situations. It turns out that our scheme is competitive when applied to well-known benchmark problems such as Taylor--Green vortex, Kovasznay flow, potential flow, lid driven cavity flow, and the flow around a cylinder.
\end{abstract}

%%Research highlights

\begin{keyword}
incompressible Navier--Stokes equation\sep  discontinuous Galerkin method\sep mixed finite  element method \sep energy stability \sep implicit Runge--Kutta methods \sep pressure robustness 
\end{keyword}

\end{frontmatter}

\section{Introduction}
Continuous and discontinuous Galerkin methods for the incompressible Navier--Stokes (NS) equation have been an active research area and extensively studied, see, e.g., \cite{Ern2012,GR1986,Hesthaven07,layton2008introduction,Temam1984} and references therein. Most of the classical $H^{1}$-conforming finite element methods weakly enforce the divergence free constraint and suffer from a loss of velocity accuracy due to the influence of pressure approximation and small viscosity, see, e.g., \cite{john2017divergence}. To remedy the situation, one popular approach by Franca and Hughes~\cite{franca1988two} is to add the grad-div stabilization term. Many works can be found in this direction, from both theoretical and computational point of view, see, e.g., \cite{lube2002stable,olshanskii2009grad,olshanskii2004grad,olshanskii2002low}. Recent research has shown that the grad-div stabilization is a penalization procedure  \cite{case2011connection,jenkins2014parameter,linke2011convergence}, and large grad-div stabilization parameters might lead to Poisson locking phenomena if the finite element method is not inf-sup stable in the limiting case \cite{jenkins2014parameter}. To completely decouple the pressure and velocity, one may use the $H(\text{div})$-conforming methods. With the help of a carefully designed velocity and pressure finite element  pair~\cite{boffi2013mixed,BDM1985,RT1977}, the numerical velocity is actually pointwise divergence-free and pressure-robust, see, e.g.,  \cite{falk2013stokes,guzman2014conforming,guzman2016h,john2017divergence,lehrenfeld2016high,schroeder2018towards,zhang2005new}. Finally for two-dimensional incompressible flows, one may use the vorticity-stream formulation \cite{cai2019high,GR1986,liu2000high} to automatically enforce the divergence-free constraint. %However, this formulation introduces extra complications \cite{bonaventura2018fully,russo1993deterministic,souli1996vorticity} under boundary conditions involving viscous terms.

For pressure robustness,
discontinuous Galerkin (DG) methods usually penalize the jump of the velocity normal component \cite{akbas2018analogue,gresho1990theory,guzman2016h,joshi2016post}. In \cite{Ern2012}, a new inf-sup condition involving the jump of pressure is constructed for steady incompressible NS equation and optimal convergence is observed when $\mathcal{P}_{k+1}\times\mathcal{P}_k$ DG space for velocity and pressure is used. In \cite{schroeder2017stabilised}, a element-wise grad-div penalization has been used on tensor product meshes for non-isothermal flow, and an improvement of mass conservation is observed for both inf-sup stable $\mathcal{P}_{k+1}\times\mathcal{P}_{k}$ and $\mathcal{P}_{k}\times\mathcal{P}_{k}$ pairs. Readers are also referred to \cite{cesmelioglu2017analysis,cockburn2005locally,cockburn2007note} for DG methods with more than two variables. In particular, \cite{cockburn2007note} achieves pointwise divergence-free velocity by $H(\text{div})$-conforming finite element subspace, while a postprocessed divergence-free numerical velocity is obtained in \cite{cockburn2005locally}.

In this paper, we present a unified framework for the spatial discretization of the time-dependent incompressible NS equation that covers the $H^{1}$-conforming, $H(\text{div})$-conforming, and DG methods including penalty term for pressure robustness and upwinding term for convection. %Since different finite elements interpret the same boundary conditions differently based on their own degrees of freedom,
With carefully designed numerical fluxes and consistent terms in the unified scheme, the semi-discrete stability for the first time is proved in Theorem \ref{stability_theorem} for the time-dependent incompressible NS equation under minimal assumption on Galerkin spaces. Furthermore, a unified discussion on the penalty term for pressure robustness is presented, and thus, the motivation of penalization in $H^1$-conforming, $H(\text{div})$-conforming, and DG methods is quite transparent, see Section \ref{secDG}. Another distinct feature of this paper is the use of a family of implicit Runge--Kutta methods for time discretization of the NS equation, which is shown to guarantee fully-discrete kinetic energy stability. To the best of our knowledge, such stability analysis could not be found in existing literature, see Theorem \ref{fullystability} for details.

In contrast to previous works, our numerical scheme incorporates the classical stress tensor $\bm{\tau}_h=\nu\nabla_h\ubold_h$ or $\nu(\nabla_h\bm{u}_h+\nabla_h\bm{u}_h^T)$ as well as the full viscous stress tensor $\bm{\tau}_h=\nu \left( \nabla_h\ubold_h + \nabla_h \ubold_h^T - \frac{2}{3} \left(\nabla_h\cdot \ubold_h \right) \mathbb{I} \right)$. Due to the divergence-free constraint, the variational formulation based on $\bm{\tau}=\nu\nabla\ubold$ or $\bm{\tau}=\nu(\nabla\ubold+\nabla\bm{u}^T)$ could be recovered from the corresponding formulation based on $\bm{\tau}=\nu \left( \nabla\ubold + \nabla \ubold^T - \frac{2}{3} \left(\nabla\cdot \ubold \right) \mathbb{I} \right)$ at the continuous level. However, the equivalence breaks down at the discrete level because of insufficient  regularity. Therefore, it is meaningful to check the numerical performance of those numerical methods based on the full viscous stress tensor. %In order to enforce the divergence free constraint in a consistent manner, we follow \cite{VersatileH(div)} and also consider the methods which starts from the full viscous stress tensor. 
In Section \ref{secNE}, we shall test the performance of our schemes with full viscous stress tensor applied to a number of steady and unsteady benchmark problems.

Preliminary notations for numerical methods are introduced in the rest of this section. We use $\mathcal{T}_h$ to denote a conforming and shape-regular simplex mesh on a  bounded Lipschitz domain ${\Omega}$ in $\mathbb{R}^d$ where $d\in\{2,3\}$. For each element $K\in\mathcal{T}_h$, let $h_{K}$ denote the diameter of $K$. Let $\mathcal{F}_{h}$ be the collection of faces of $\mathcal{T}_h$ with $\mathcal{F}_h^{i}$ the set of interior faces and $\mathcal{F}_h^{\partial}$ the set of boundary faces. For any $(d-1)$-dimensional set $\Sigma$, we use $\langle\cdot,\cdot\rangle_\Sigma$ to denote the $L^2$ inner product on $\Sigma$, and 
\begin{align*}
&\ipbt{\cdot}{\cdot}:= \sum_{K \in \mathcal{T}_h} \langle\cdot,\cdot\rangle_{\partial K},\quad\langle\cdot,\cdot\rangle_{\partial\mathring{\mathcal{T}}_h}:=\sum_{K\in\mathcal{T}_h}\langle\cdot,\cdot\rangle_{\partial K\backslash\partial\Omega},\\
&\ipbf{\cdot}{\cdot}:= \sum_{F \in \mathcal{F}_h} \ipbtkf{\cdot}{\cdot},\quad
\langle\cdot,\cdot\rangle_{\mathcal{F}^i_h}:= \sum_{F\in\mathcal{F}^i_h} \ipbtkf{\cdot}{\cdot}.
\end{align*}
For each $F\in\mathcal{F}_h^{i}$, we fix a unit normal $\bm{n}_F$ to $F$, which points from one element $K^+$ to the other element $K^-$ on the other side. The jump and average operators are defined as:
\begin{align*}
\llbracket \phi \rrbracket|_F&= \phi|_{K^+} - \phi|_{K^-}, \qquad \llbracket \phi \nbold \rrbracket|_F = \phi|_{K^+} \bm{n}_F-\phi|_{K^-} \bm{n}_F, \qquad \llcurve \phi \rrcurve|_F= \frac{1}{2} \left( \phi|_{K^+} + \phi|_{K^-} \right), \\
\llbracket \vbold \rrbracket|_F&= \vbold|_{K^+} - \vbold|_{K^-}, \qquad \llbracket \vbold \otimes \nbold \rrbracket|_F= \vbold|_{K^+} \otimes \bm{n}_F- \vbold|_{K^-} \otimes \bm{n}_F, \qquad  \llcurve \vbold \rrcurve|_F= \frac{1}{2} \left( \vbold|_{K^+} + \vbold|_{K^-}\right),
\end{align*}
where $\phi$ and $\vbold$ are arbitrary scalar- and vector-valued functions, respectively. For a boundary face $F\in\mathcal{F}_h^{\partial}$ which is contained in a single element $K\in\mathcal{T}_h$, we further assume that $\bm{n}_F$ is the outward pointing normal to $\partial\Omega$ and define
%%%%%%%%%%%%%%%%%%%%
\begin{align*}
\llbracket \phi \rrbracket|_F&= \phi|_K, \qquad \llbracket \phi \nbold \rrbracket|_F= \phi|_K\bm{n}_F, \qquad \llcurve \phi \rrcurve|_F= \phi|_K, \\
\llbracket \vbold \rrbracket|_F&= \vbold|_K, \qquad \llbracket \vbold\otimes \nbold \rrbracket|_F= \vbold|_K\otimes \bm{n}_F, \qquad  \llcurve \vbold \rrcurve|_F= \vbold|_K.
\end{align*}
Throughout the rest of this paper, we use $\bm{n}\in\prod_{F\in\mathcal{F}_h}\mathbb{R}^d$ to denote the piecewise constant vector defined on the skeleton $\mathcal{F}_h$ such that $\bm{n}|_F:=\bm{n}_F$ for all $F\in\mathcal{F}_h.$ 
Let $\mathcal{P}_j \left(K \right)$ denote the space of polynomials of degree at most $j$. We shall make use of the following   function spaces
\begin{align*}
&[H^{m}(\mathcal{T}_h)]^d = \left\{\vbold  \in [{L}^{2}(\Omega)]^d: \vbold|_{K} \in[{H}^{m}(K)]^d,~\forall K \in \mathcal{T}_h \right\}, \\
&L^{2}_{0}(\Omega) = \left\{q \in {L}^{2}(\Omega):  \int_{\Omega}q=0\right\},\\
&\bm{V}_h \subseteq\left\{\bm{v}_h  \in [{L}^{2}(\Omega)]^d: \bm{v}_h |_{K} \in  [\mathcal{P}_r \left( K \right)]^d,~\forall K \in \mathcal{T}_h \right\},\\
&Q_h\subseteq \left\{q_h\in L^{2}_{0}(\Omega): q_{h} |_{K} \in \mathcal{P}_k \left( K \right),~\forall K \in \mathcal{T}_h \right\}, 
\end{align*}
where $\bm{V}_h$ and $Q_h$ will be given in Section \ref{secDG}. Here we do not require a specific relationship between $r$ and $k$.

The rest of this paper is organized as follows. In Section \ref{gen_method_sec}, we first present the unified framework and then prove the semi- and fully-discrete stability of the general scheme. In Section \ref{secDG}, we derive $H^1$-, $H(\text{div})$-conforming, and DG methods from the unified scheme, and discuss the expression of the penalty term for each of the three methods. In Section \ref{secNE}, we test our schemes in both unsteady and steady situations, and compare the simulation results with classical schemes and data in the literature. Finally we conclude our paper in Section \ref{conclusion}.
%%%%%%%%%%%%%%%%%%%%%%%%%%%%%%%%%%%%%%%%%%%%%%%%%%%%%%%%%%%%%%%%%%%%%%%%%%%%%%%%%%%%%%%%%%%%%%%%%%%%%%%%%%%%%%%%%%%%%%%%%%%%%%%%%%%%%%%%%%%%%%%%%%
\section{General formulation} \label{gen_method_sec}

In this section, we present a general framework covering the $H^1$-conforming, $H(\text{div})$-conforming, and DG methods for the following incompressible Navier--Stokes equation
\begin{align*}
\partial_t\ubold + \nabla \cdot \left( \ubold \otimes \ubold + p\mathbb{I} \right) - \nu\nabla \cdot \bm{\tau}(\bm{u})  = \bm{f},  \qquad & \text{in} \quad \left(0, T\right]\times \Omega,\\
\nabla \cdot \ubold = 0, \qquad & \text{in} \quad \left(0, T\right]\times \Omega,\\  
\ubold=\bm{0},\qquad& \text{on} \quad \left(0, T\right] \times \partial\Omega,\\
\ubold(0,\xbold)=\ubold_0(\xbold),\qquad& \text{in} \quad \Omega, 
\end{align*}
where $\nu>0$ is the viscosity constant, and
$\bm{\tau}(\bm{u})$ is the viscous strain tensor that could be 
%%%%%%%%%%%%%%%%%%%%%%%
\begin{equation}\label{tau}
    \bm{\tau}(\bm{u}):=\nabla\bm{u},\text{ or }\bm{\tau}(\bm{u}):=\nabla\bm{u}+ \nabla\bm{u}^T,\text{ or }\bm{\tau}(\bm{u}):=\nabla\bm{u}+ (\nabla\bm{u})^T - \frac{2}{3} \left(\nabla\cdot \bm{u}\right) \mathbb{I}.
\end{equation}
%%%%%%%%%%%%%%%%%%%%%%%%%
The three choices of viscous strain tensor yield the same problem in the smooth level.
Let $(\cdot,\cdot)$ denote the usual $L^2$ inner product on $\Omega$ and $\nabla_h$ the broken gradient with respect to $\mathcal{T}_h$.
Our general semi-discrete scheme seeks unknowns  $\left(\bm{u}_h(t),p_h(t) \right)\in\bm{V}_h\times Q_h$ for each time $t\in(0,T]$ such that
\begin{subequations}\label{mixed}
    \begin{align}
&(\partial_t\ubold_h,\vbold_h)- (\ubold_h \otimes \ubold_h,\nabla_h \vbold_h)- (p_h,\nabla_h \cdot \vbold_h) + \ipbt{\hat{\bm{\sigma}}_{h} \nbold}{\bm{v}_h} \\[1.5ex] 
\nonumber & -\frac{1}{2} (\left(\nabla_h \cdot \bm{u}_h \right) \ubold_h,\vbold_h) + \frac{1}{2}  \langle\bm{u}_h,\nbold \llcurve \ubold_h \cdot \vbold_h\rrcurve\rangle_{\partial\mathring{\mathcal{T}}_h}+d_h(\bm{u}_h,\bm{v}_h)\\[1.5ex]
\nonumber & + \nu \bigg[ (\bm{\tau}_h(\bm{u}_h),\nabla_h \vbold_h) - \ipbt{\widehat{\bm{\tau}}_h\, \nbold}{\vbold_h}
+ \ipbt{\widehat{\bm{u}}_h - \ubold_h}{\bm{\tau}_h(\bm{v}_h) \nbold} \bigg]=(\bm{f},\vbold_h),\\[1.5ex]
&(\nabla_h\cdot \ubold_h,q_h)-\iipbfb{\llbracket \ubold_h \rrbracket\cdot\nbold}{\llcurve q_h \rrcurve} = 0,
\end{align}
\end{subequations}
for all $\left(\bm{v}_h,q_h\right)\in\bm{V}_h\times Q_h$ subject to the initial condition 
$\bm{u}_h(0)=I_h\bm{u}_0,$ where $I_h$ is a suitable interpolation onto $\bm{V}_h,$ and $\bm{\tau}_h(\bm{u}_h)$
% :=\nabla_h\bm{u}_h+ (\nabla_h\bm{u}_h)^T - \frac{2}{3} \left(\nabla_h\cdot \bm{u}_h\right) \mathbb{I}$.
is the discrete viscous strain tensor, which could be  
\begin{equation}\label{tauh}
    \bm{\tau}_h(\bm{u}_h):=\nabla_h\bm{u}_h,\text{ or } \bm{\tau}_h(\bm{u}_h):=\nabla_h\bm{u}_h+ \nabla_h\bm{u}_h^T,\text{ or }\bm{\tau}_h(\bm{u}_h):=\nabla_h\bm{u}_h+ (\nabla_h\bm{u}_h)^T - \frac{2}{3} \left(\nabla_h\cdot \bm{u}_h\right) \mathbb{I}.
\end{equation}
Note that $\frac{1}{2}  \langle\bm{u}_h,\nbold \llcurve \ubold_h \cdot \vbold_h\rrcurve\rangle_{\partial\mathring{\mathcal{T}}_h}$ is a consistent term added for convenience of analysis. In order to improve pressure robustness, we use the penalty term $d_h(\bm{u}_h,\bm{v}_h)$ which is consistent and positive semi-definite, namely, for all $v_h\in\bm{V}_h$,
\begin{align}
&d_h(\bm{v}_h,\bm{v}_h)\geq 0,\label{positivedh}\\
&d_h(\bm{u},\bm{v}_h)=0.
\end{align}
In principle, $d_h$ could also depend on the pressure although we have not found such examples in practice.
The particular expression of $d_h$ will be specified later, see Section \ref{secDG} for details. Let $h_F$ denote the diameter of $F\in\mathcal{F}_h$ and $h=\{h_F\}_{F\in\mathcal{F}_h}$  the face size function.
We recommend the following numerical fluxes 
\begin{align*}
\widehat{\bm{\sigma}}_h &= \llcurve \ubold_h \rrcurve \otimes \llcurve \ubold_h \rrcurve + \llcurve p_h \rrcurve \mathbb{I}+\zeta\left| \llcurve\ubold_h\rrcurve \cdot \nbold\right| \llbracket \ubold_h \otimes \nbold \rrbracket, \\[1.5ex]
\widehat{\bm{\tau}}_{h} & = \llcurve\bm{\tau}_h(\bm{u}_h)\rrcurve -\eta h^{-1} \llbracket \bm{u}_h \otimes \nbold \rrbracket,\\
\widehat{\bm{u}}_h &= \llcurve \ubold_h \rrcurve\text{ on }\mathcal{F}_h^i,\quad\widehat{\bm{u}}_h =\bm{0}~\text{ on }\mathcal{F}_h^\partial.
\end{align*}
where $\zeta=\{\zeta_F\}_{F\in\mathcal{F}_h}$ and $\eta=\{\eta_F\}_{F\in\mathcal{F}_h}$ are user specified piecewise non-negative constants for controlling the amount of numerical dissipation. 

%%%%%%%%%%%%%%%%%%%
%%%%%%%%%%%%%%%%%%%%%%%%%%%%%%%%%%%%%%%%%
Now we introduce the viscous bilinear form $a_h$, the convective bilinear form $b_h$, and the convective form $c_h$ in the following.
\begin{align}
\nonumber a_h(\vbold_h, \wbold_h)&:=( \bm{\tau}_h(\bm{v}_h),\nabla_h \wbold_h)-  \ipbt{\left( \llcurve \bm{\tau}_h(\bm{v}_h) \rrcurve -\eta h^{-1}\llbracket \vbold_h \otimes \nbold \rrbracket \right) \nbold}{\wbold_h}\\
\nonumber&\quad+ \langle\llcurve \vbold_h \rrcurve - \vbold_h,\bm{\tau}_h(\bm{w}_h)  \nbold\rangle_{\partial\mathring{\mathcal{T}}_h}-\langle\vbold_h,\bm{\tau}_h(\bm{w}_h)  \nbold\rangle_{\partial\Omega},\\
b_h(\bm{v}_h, q_h)&:= (\nabla_h \cdot \vbold_h,q_h)-\iipbfb{\llbracket \vbold_h \rrbracket\cdot\nbold}{\llcurve q_h \rrcurve},\label{bh_form}\\
\nonumber c_h \left(\bm{\beta}_h ; \vbold_h, \wbold_h \right)  &:= - (\vbold_h \otimes \bm{\beta}_h,\nabla_h \wbold_h)-\frac{1}{2} (\left(\nabla_h \cdot \bm{\beta}_h \right) \vbold_h,\wbold_h) \\
\nonumber & + \ipbt{\left(\zeta \left|  \llcurve\bm{\beta}_h\rrcurve \cdot \nbold \right| \llbracket \vbold_h \otimes \nbold \rrbracket \right) \nbold}{\wbold_h}+ \ipbt{(\llcurve\vbold_h \rrcurve \otimes \llcurve \bm{\beta}_h \rrcurve)\bm{n}}{\bm{w}_h}+  \frac{1}{2} \langle\bm{\beta}_h,\nbold \llcurve \vbold_h \cdot \wbold_h \rrcurve\rangle_{\partial\mathring{\mathcal{T}}_h}.
\end{align} 
%%%%%%%%%%%%%%%%%
%%%%%%%%%%%%%%%
One can then rewrite \eqref{mixed} in the following compact form
\begin{subequations}\label{compact}
\begin{align}
&(\partial_t\ubold_h,\vbold_h) +N_h(\bm{u}_h;\bm{v}_h)- b_h \left( \vbold_h, p_h \right)=(\bm{f},\vbold_h),\quad\forall\bm{v}_h\in\bm{V}_h,\label{compact1}  \\[1.5ex]
& b_h \left(\ubold_h, q_h \right) = 0,\quad \forall q_h\in Q_h,\label{compact2}
\end{align}
\end{subequations}
where 
$$N_h(\bm{u}_h;\bm{v}_h):=c_h \left(\ubold_h; \ubold_h, \vbold_h \right) +\nu a_h \left(\ubold_h, \vbold_h \right)+d_h(\bm{u}_h,\bm{v}_h).$$ Note that $N_h(\bm{u}_h;\bm{v}_h)$ is nonlinear in $\bm{u}_h$ but linear in $\bm{v}_h.$
%%%%%%%%%%%%%%%%%%%
\begin{remark}
From the derivation given above, it can be observed that the scheme \eqref{compact} is consistent if $\bm{u}(t)\in[H^{1}_{0}(\Omega)]^d\cap [H^{\frac{3}{2}+\varepsilon}(\mathcal{T}_h)]^d$ and $p(t)\in L^{2}_{0}(\Omega)\cap{H}^{\frac{1}{2}+\varepsilon}(\mathcal{T}_h)~with~\varepsilon>0$.
\end{remark}{}
%%%%%%%%%%%%%%%%%%%%%%%%
%%%%%%%%%%%%%%%%%%%%%%%
%%%%%%%%%%%%%%%%%%%%%%%
Throughout the rest of this paper, we use $C$ to denote any positive  absolute constant that is independent of $h.$ We shall also make use of the following mesh-dependent norms
\begin{align*}
\|\llbracket\vbold_{h}\rrbracket\|_{L^2(\mathcal{F}_h)}&:=\langle\bm{v}_h,\bm{v}_h\rangle_{\mathcal{F}_h}^\frac{1}{2},\\
    \|\bm{v}_h\|_{1,h}&:=\left(\| \bm{\tau}_h(\bm{v}_h)\|_{{L}^2 \left(\Omega \right)}^2 + \eta h^{-1}\|\llbracket\vbold_{h}\rrbracket\|_{L^2(\mathcal{F}_h)}^2\right)^\frac{1}{2}.
\end{align*}
It is noted that $\|\cdot\|_{1,h}$ is a well-defined norm on $\bm{V}_h$, see \cite{Brenner2004,BS2008,chen2020versatile} for details.
The next theorem shows that $a_h$ is coercive with respect to the norm $\|\cdot\|_{1,h}.$
\begin{lemma}[Positivity of $a_h$]\label{positiveah}
For all $F\in\mathcal{F}_h,$ assume that $\eta_F\geq{\eta}_0$, where $\eta_0$ is a sufficiently large constant independent of $h.$
Then for $\vbold_h\in\bm{V}_h$ it holds that
\begin{align*}
     a_h \left( \vbold_h, \vbold_h \right)\geq C\|\bm{v}_h\|_{1,h}^2.
\end{align*}
\begin{proof}
%%%%%%%%%%%%%%%%
Combining terms on $\mathcal{F}_h$, one can rewrite $a_h$ in the following symmetric form
\begin{equation}\label{ahsym}
a_h(\vbold_h, \wbold_h)=( \bm{\tau}_h(\bm{v}_h),\nabla_h \wbold_h)   -\ipbf{\llbracket \vbold_h \rrbracket}{\llcurve  \bm{\tau}_h(\bm{w}_h) \rrcurve \bm{n}}  
-\ipbf{\llbracket \wbold_h \rrbracket}{\llcurve  \bm{\tau}_h(\bm{v}_h) \rrcurve \bm{n}} 
+\ipbf{\eta h^{-1} \llbracket \vbold_h \rrbracket}{\llbracket \wbold_h \rrbracket}.
\end{equation}
First we assume $\bm{\tau}_h(\bm{v}_h)=\nabla_h\bm{v}_h+\nabla_h\bm{v}_h^T-\frac{2}{3}(\nabla_h\cdot\bm{v}_h)\mathbb{I}$. It follows from \eqref{ahsym} and the algebraic identity
\begin{align*}
    (\bm{\tau}_h(\bm{v}_h),\nabla_h\bm{w}_h)=\frac{1}{2}(\bm{\tau}_h(\bm{v}_h),\bm{\tau}_h(\bm{w}_h))+\left(\frac{2}{3}-\frac{2d}{9}\right)(\nabla_h\cdot\bm{v}_h,\nabla_h\cdot\bm{w}_h)
\end{align*}
that
\begin{equation*}
a_h(\vbold_h, \vbold_h)\geq\frac{1}{2}\|\bm{\tau}_h(\bm{v}_h)\|_{L^2(\Omega)}^2-2\ipbf{\llbracket \vbold_h \rrbracket}{\llcurve  \bm{\tau}_h(\bm{v}_h) \rrcurve \bm{n}} +\ipbf{\eta h^{-1} \llbracket \vbold_h \rrbracket}{\llbracket \vbold_h \rrbracket}.
\end{equation*}
Assuming $\eta$ is sufficiently large, we  conclude the proof from the trace and Cauchy--Schwarz inequalities, which is standard in the analysis of interior penalty DG methods, see, e.g., \cite{Arnold1982,BS2008} for details. The other two cases $\bm{\tau}_h(\bm{u}_h)=\nabla_h\bm{u}_h$ and $\bm{\tau}_h(\bm{u}_h)=\nabla_h\bm{u}_h+\nabla_h\bm{u}_h^T$ can be proved in a similar way.
\end{proof}
%%%%%%%%%%%%%%%
\label{coercive_vis_lemma}
\end{lemma}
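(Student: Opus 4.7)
The plan is to follow the standard symmetric interior penalty (SIP) DG coercivity argument, handling the three choices of $\bm{\tau}_h$ in parallel. First, I would algebraically recast $a_h$ into the symmetric form \eqref{ahsym} by applying the DG identity
\begin{equation*}
\ipbt{\phi\bm{n}}{\psi} = \ipbf{\llbracket\phi\bm{n}\rrbracket}{\llcurve\psi\rrcurve}+\iipbf{\llcurve\phi\rrcurve\bm{n}}{\llbracket\psi\rrbracket}
\end{equation*}
(with matching analogues for vector/tensor arguments) to both $\ipbt{(\llcurve\bm{\tau}_h(\bm{v}_h)\rrcurve-\eta h^{-1}\llbracket\bm{v}_h\otimes\bm{n}\rrbracket)\bm{n}}{\bm{w}_h}$ and $\langle\llcurve\bm{v}_h\rrcurve-\bm{v}_h,\bm{\tau}_h(\bm{w}_h)\bm{n}\rangle_{\partial\mathring{\mathcal{T}}_h}-\langle\bm{v}_h,\bm{\tau}_h(\bm{w}_h)\bm{n}\rangle_{\partial\Omega}$. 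Care is needed on $\mathcal{F}_h^\partial$ where the jump and average conventions both reduce to the trace, so that the two boundary pieces combine cleanly with the interior ones.

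Next, I would set $\bm{w}_h=\bm{v}_h$ in the symmetric form and analyze the volume term $(\bm{\tau}_h(\bm{v}_h),\nabla_h\bm{v}_h)$ case by case. For $\bm{\tau}_h(\bm{v}_h)=\nabla_h\bm{v}_h$ the term is trivially $\|\nabla_h\bm{v}_h\|_{L^2(\Omega)}^2$; for the symmetric tensor $\nabla_h\bm{v}_h+\nabla_h\bm{v}_h^T$, use $(\bm{A}+\bm{A}^T,\bm{A})=\tfrac{1}{2}\|\bm{A}+\bm{A}^T\|^2$; and for the traceless tensor, the algebraic identity in the excerpt yields
\begin{equation*}
(\bm{\tau}_h(\bm{v}_h),\nabla_h\bm{v}_h)=\tfrac{1}{2}\|\bm{\tau}_h(\bm{v}_h)\|_{L^2(\Omega)}^2 +\bigl(\tfrac{2}{3}-\tfrac{2d}{9}\bigr)\|\nabla_h\cdot\bm{v}_h\|_{L^2(\Omega)}^2,
\end{equation*}
whose second summand is non-negative for $d\in\{2,3\}$ and hence discardable.

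The main obstacle, as in all SIP-type analyses, is absorbing the cross terms $-2\ipbf{\llbracket\bm{v}_h\rrbracket}{\llcurve\bm{\tau}_h(\bm{v}_h)\rrcurve\bm{n}}$. Here I would apply Cauchy--Schwarz face-by-face, then invoke the discrete trace inequality $\|\llcurve\bm{\tau}_h(\bm{v}_h)\rrcurve\bm{n}\|_{L^2(F)}\le C h_F^{-1/2}\|\bm{\tau}_h(\bm{v}_h)\|_{L^2(\omega_F)}$ valid on shape-regular simplex meshes for piecewise polynomials, followed by Young's inequality with a small parameter $\varepsilon$. This gives an estimate of the form
\begin{equation*}
2\bigl|\ipbf{\llbracket\bm{v}_h\rrbracket}{\llcurve\bm{\tau}_h(\bm{v}_h)\rrcurve\bm{n}}\bigr| \le\varepsilon\|\bm{\tau}_h(\bm{v}_h)\|_{L^2(\Omega)}^2 +\tfrac{C}{\varepsilon}\|h^{-1/2}\llbracket\bm{v}_h\rrbracket\|_{L^2(\mathcal{F}_h)}^2.
\end{equation*}
Choosing $\varepsilon<\tfrac{1}{2}$ preserves a positive share of $\|\bm{\tau}_h(\bm{v}_h)\|_{L^2(\Omega)}^2$, and requiring $\eta_0>C/\varepsilon$ leaves a positive share of the penalty $\eta h^{-1}\|\llbracket\bm{v}_h\rrbracket\|_{L^2(\mathcal{F}_h)}^2$. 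Summing delivers $a_h(\bm{v}_h,\bm{v}_h)\ge C\|\bm{v}_h\|_{1,h}^2$.

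The subtle points are (i) tracking boundary-face sign conventions so that the symmetrization \eqref{ahsym} is literally correct, and (ii) verifying the non-negativity of $\tfrac{2}{3}-\tfrac{2d}{9}$ exactly at $d=3$, where the coefficient vanishes and the deviatoric structure of $\bm{\tau}_h$ contributes nothing extra beyond the $\tfrac{1}{2}\|\bm{\tau}_h(\bm{v}_h)\|_{L^2(\Omega)}^2$ piece; all remaining steps are the routine SIP machinery cited in \cite{Arnold1982,BS2008}.
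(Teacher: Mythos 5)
Your proposal is correct and follows essentially the same route as the paper: symmetrize $a_h$ into the form \eqref{ahsym}, bound the volume term from below case by case (using the same algebraic identity for the deviatoric tensor and discarding the non-negative $\bigl(\tfrac{2}{3}-\tfrac{2d}{9}\bigr)\|\nabla_h\cdot\bm{v}_h\|^2$ contribution), and absorb the cross terms via the discrete trace inequality, Cauchy--Schwarz, and Young's inequality with $\eta_0$ large enough. The only difference is that you spell out the two simpler tensor cases and the $d=3$ degeneracy explicitly, where the paper defers them to the standard SIP references.
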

%%%%%%%%%%%%%%%%%%%%%%%%
%%%%%%%%%%%%%%%%%%%%%%%
%%%%%%%%%%%%%%%%%%%%%%%%
\begin{lemma}[Positivity of $c_h$]\label{positivech}
Assume  $\zeta_F\geq 0.5$ for all $F\in\mathcal{F}_{h}^{\partial}$. Then for  $\bm{\beta}_h, \vbold_h\in \bm{V}_h,$ we have
\begin{align*}
    c_h \left(\bm{\beta}_h; \vbold_h, \vbold_h \right)\geq 0.
\end{align*}
\end{lemma}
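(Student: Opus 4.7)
The plan is to set $\bm{w}_h=\bm{v}_h$ in $c_h(\bm{\beta}_h;\bm{v}_h,\bm{w}_h)$ and reduce the resulting expression to a sum of nonnegative face integrals. First I would collapse the two volume terms together: using the pointwise identity $(\bm{v}_h\otimes\bm{\beta}_h):\nabla\bm{v}_h=\bm{\beta}_h\cdot\nabla\tfrac{|\bm{v}_h|^2}{2}$ and applying elementwise integration by parts, the convective volume term rewrites as $-(\bm{v}_h\otimes\bm{\beta}_h,\nabla_h\bm{v}_h)=\tfrac{1}{2}((\nabla_h\cdot\bm{\beta}_h)\bm{v}_h,\bm{v}_h)-\tfrac{1}{2}\sum_{K}\langle\bm{\beta}_h\cdot\bm{n}_K,|\bm{v}_h|^2\rangle_{\partial K}$, where $\bm{n}_K$ denotes the outward normal on $\partial K$. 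The first piece cancels the explicit $-\tfrac{1}{2}((\nabla_h\cdot\bm{\beta}_h)\bm{v}_h,\bm{v}_h)$ appearing in $c_h$, so all that remains is a sum of face integrals to analyze.

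Next I would split the remaining $\partial\mathcal{T}_h$ sum into interior and boundary contributions and expand in terms of one-sided traces $\bm{v}_h^\pm$, $\bm{\beta}_h^\pm$, exploiting $\bm{n}_K=\pm\bm{n}_F$ depending on whether $K=K^\pm$. On an interior face $F$, the elementary identity $a_+b_+-a_-b_-=\llcurve a\rrcurve(b_+-b_-)+\llbracket a\rrbracket\tfrac{b_++b_-}{2}$ (applied with $a=\bm{\beta}_h\cdot\bm{n}_F$ and $b=|\bm{v}_h|^2$) rewrites the leftover $-\tfrac{1}{2}$ boundary contribution so that it cancels \emph{exactly} with the sum of the two remaining non-penalty face terms: namely, $\ipbt{(\llcurve\bm{v}_h\rrcurve\otimes\llcurve\bm{\beta}_h\rrcurve)\bm{n}}{\bm{v}_h}$, which on $F$ collapses via $\llcurve\bm{v}_h\rrcurve\cdot\llbracket\bm{v}_h\rrbracket=\tfrac{1}{2}(|\bm{v}_h^+|^2-|\bm{v}_h^-|^2)$ to $\llcurve\bm{\beta}_h\rrcurve\cdot\bm{n}_F\,\llcurve\bm{v}_h\rrcurve\cdot\llbracket\bm{v}_h\rrbracket$, and $\tfrac{1}{2}\langle\bm{\beta}_h,\bm{n}\llcurve\bm{v}_h\cdot\bm{v}_h\rrcurve\rangle_{\partial\mathring{\mathcal{T}}_h}$, which collapses to $\tfrac{1}{4}\llbracket\bm{\beta}_h\rrbracket\cdot\bm{n}_F(|\bm{v}_h^+|^2+|\bm{v}_h^-|^2)$. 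What survives on each interior face is thus only the upwind penalty $\zeta_F|\llcurve\bm{\beta}_h\rrcurve\cdot\bm{n}_F|\,|\llbracket\bm{v}_h\rrbracket|^2\geq 0$.

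On a boundary face $F\in\mathcal{F}_h^\partial$ the $\partial\mathring{\mathcal{T}}_h$ term is absent by definition, and the three remaining contributions combine into $\big(\tfrac{1}{2}\bm{\beta}_h\cdot\bm{n}_F+\zeta_F|\bm{\beta}_h\cdot\bm{n}_F|\big)|\bm{v}_h|^2$, which is bounded below by $(\zeta_F-\tfrac{1}{2})|\bm{\beta}_h\cdot\bm{n}_F|\,|\bm{v}_h|^2\geq 0$ precisely under the hypothesis $\zeta_F\geq\tfrac{1}{2}$ on $\mathcal{F}_h^\partial$. Summing interior and boundary face contributions then yields $c_h(\bm{\beta}_h;\bm{v}_h,\bm{v}_h)\geq 0$. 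The hard part is not any estimate but the bookkeeping: one must consistently distinguish the appearances of $\bm{n}$ that denote the fixed face normal $\bm{n}_F$ (inside jumps and averages, and inside $|\llcurve\bm{\beta}_h\rrcurve\cdot\bm{n}|$) from those denoting the outward element normal (produced by integration by parts and by the $\ipbt{\cdot\,\bm{n}}{\cdot}$ pairings), since it is the sign flip between the two sides of an interior face that drives the cancellation of all non-penalty terms.
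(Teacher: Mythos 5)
Your proposal is correct and follows essentially the same route as the paper: both arguments reduce to integrating the convective volume term by parts elementwise against $\vbold_h$, cancelling the $\frac{1}{2}(\nabla_h\cdot\bm{\beta}_h)$ contribution, and using the same interior-face jump/average algebra to collapse $c_h(\bm{\beta}_h;\vbold_h,\vbold_h)$ to the nonnegative interior upwind terms plus the boundary contribution $\ipbtobf{\zeta|\bm{\beta}_h\cdot\nbold|+0.5\,\bm{\beta}_h\cdot\nbold}{\vbold_h\cdot\vbold_h}$. The paper merely organizes the computation in two stages (first rewriting $c_h$ as \eqref{convective_form}, then applying the energy identity), so the difference is only one of bookkeeping, which you handle correctly.
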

\begin{proof}
Using integration by parts,
% and following the identity 
% \begin{align*}
% \vbold_h \otimes \bm{\beta}_h:\nabla_h \wbold_h+\wbold_h\cdot \nabla_h\cdot(\vbold_h \otimes \bm{\beta}_h)=\nabla_h\cdot((\vbold_h\cdot\bm{w}_h)\bm{\beta}_h)   
% \end{align*}
%%%%%%%%%%%%%%%%%%
$c_h$ could be rewritten as
%%%%%%%%%%%%%%%%%
\begin{align}\label{convective_form}
c_h \left(\bm{\beta}_h; \vbold_h, \wbold_h \right) &= (\bm{\beta}_h \cdot \nabla_h \vbold_h,\wbold_h)+ \frac{1}{2} (\left(\nabla_h \cdot \bm{\beta}_h \right) \vbold_h,\wbold_h)- \iipbf{ \left( \llcurve\bm{\beta}_h\rrcurve \cdot \nbold \right) \llbracket \vbold_h \rrbracket}{\llcurve \wbold_h \rrcurve}\nonumber\\
&\quad-\frac{1}{2}\iipbf{\llbracket \beta_h \rrbracket\cdot\nbold}{\llcurve \vbold_h\cdot\wbold_h \rrcurve}+ \ipbf{\zeta \left| \llcurve\bm{\beta}_h\rrcurve \cdot \bm{n} \right|\llbracket \vbold_h \rrbracket}{\llbracket \wbold_h \rrbracket}.
\end{align}
It then follows from the following identities
\begin{align*}
&(\bm{\beta}_h \cdot \nabla_h \vbold_h,\vbold_h)+ \frac{1}{2} (\left(\nabla_h \cdot \bm{\beta}_h \right) \vbold_h,\vbold_h)
=\frac{1}{2}  \ipbt{\vbold_h}{\vbold_h \left(\bm{\beta}_h \cdot \nbold \right)},\\
&\frac{1}{2}  \ipbt{\vbold_h}{\vbold_h \left(\bm{\beta}_h \cdot \nbold \right)}= \frac{1}{2} \iipbf{\llbracket \bm{\beta}_h \rrbracket}{\nbold \llcurve \vbold_h \cdot \vbold_h \rrcurve} + \iipbf{ \left(\llcurve \bm{\beta}_h \rrcurve \cdot \nbold \right) \llbracket \vbold_h \rrbracket}{\llcurve \vbold_h \rrcurve}+\frac{1}{2}\ipbtobf{\bm{\beta}_h\cdot\nbold}{\vbold_h \cdot \vbold_h },
\end{align*}
%%%%%%%%%%%%%
and \eqref{convective_form} with $\wbold_h = \vbold_h$ that
\begin{align}
    c_h \left(\bm{\beta}_h; \vbold_h, \vbold_h \right) =  \zeta \iipbf{\left| \llcurve\bm{\beta}_h\rrcurve \cdot \nbold \right|\llbracket \vbold_h \rrbracket}{\llbracket \vbold_h \rrbracket}+ \ipbtobf{\zeta|\bm{\beta}_h\cdot\nbold|+0.5\bm{\beta}_h\cdot\nbold}{\vbold_h \cdot \vbold_h },
\end{align}
%%%%%%%%%%%%%%%%
Finally, we conclude the proof by using $\zeta_{F}\geq 0.5$ for $F\in\mathcal{F}_{h}^{\partial}$.
\end{proof}
%%%%%%%%%%%%%%%%%%%%%%%
\begin{remark}
The term $\ipbtobf{\zeta|\bm{\beta}_h\cdot\nbold|}{\vbold_h \cdot \vbold_h }$ with $\zeta\geq 0.5$ plays an important role in guaranteeing the  positive semi-definiteness of the convective form, which is crucial for proving the stability of the semi-discrete time-dependent incompressible NS equation. In contrast, such stability is not considered in DG schemes for  steady-state NS equation. This new modification is one of the key differences of our formulation from the classical formulations in e.g., \cite{akbas2018analogue,Ern2012}.
\end{remark}
%%%%%%%%%%%%%%%%%%%%%%%%
With the help of Lemmata~\ref{positiveah} and~\ref{positivech}, we obtain the following stability.
%%%%%%%%%%%%%%%%%%%%%%%%%%
\begin{theorem}\label{stability_theorem}[Semi-discrete Stability Estimate] Let the assumptions in Lemmata \ref{positiveah} and \ref{positivech} hold and 
\begin{align*}
    \left\| \bm{f} \right\|_{L^1 \left(0, t;L^2 \left(\Omega\right) \right)}:= \int_0^t\| \bm{f}(s) \|_{L^2 (\Omega)} ds<\infty,\quad\forall t\in[0,T].
\end{align*}
Then for all $0\leq t\leq T$, the scheme~\eqref{compact} admits the following semi-discrete stability
\begin{align*}
      \| \ubold_h(t) \|_{L^2 (\Omega)} \leq \left\| \ubold_h \left(0 \right) \right\|_{L^2 (\Omega)} + \| \bm{f}\|_{L^1 \left(0, t;{L}^2(\Omega)\right)}.
\end{align*}
\end{theorem}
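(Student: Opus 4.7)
The plan is a standard energy argument built entirely from the two positivity lemmata already in hand. First I would test the momentum equation \eqref{compact1} with $\bm{v}_h = \bm{u}_h(t)$ and the continuity equation \eqref{compact2} with $q_h = p_h(t)$. The pressure-velocity coupling through $b_h(\bm{u}_h, p_h)$ cancels, and one is left with
$$(\partial_t \bm{u}_h, \bm{u}_h) + N_h(\bm{u}_h; \bm{u}_h) = (\bm{f}, \bm{u}_h).$$

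Next, decomposing $N_h(\bm{u}_h; \bm{u}_h) = c_h(\bm{u}_h; \bm{u}_h, \bm{u}_h) + \nu\, a_h(\bm{u}_h, \bm{u}_h) + d_h(\bm{u}_h, \bm{u}_h)$, Lemma~\ref{positivech} gives $c_h(\bm{u}_h;\bm{u}_h,\bm{u}_h) \geq 0$, Lemma~\ref{positiveah} gives $\nu\, a_h(\bm{u}_h, \bm{u}_h) \geq C\nu\|\bm{u}_h\|_{1,h}^2 \geq 0$, and assumption \eqref{positivedh} gives $d_h(\bm{u}_h, \bm{u}_h) \geq 0$. Hence $N_h(\bm{u}_h;\bm{u}_h) \geq 0$, and after applying the Cauchy--Schwarz inequality to $(\bm{f}, \bm{u}_h)$ we obtain the energy inequality
$$\tfrac{1}{2}\tfrac{d}{dt}\|\bm{u}_h(t)\|_{L^2(\Omega)}^2 \leq \|\bm{f}(t)\|_{L^2(\Omega)}\,\|\bm{u}_h(t)\|_{L^2(\Omega)}.$$

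The final step is to convert this quadratic differential inequality into a linear bound on $\|\bm{u}_h\|_{L^2(\Omega)}$. For $\epsilon>0$ define $\phi_\epsilon(t) := \bigl(\|\bm{u}_h(t)\|_{L^2(\Omega)}^2 + \epsilon\bigr)^{1/2}$, differentiate, and use the previous display to get $\phi_\epsilon'(t) \leq \|\bm{f}(t)\|_{L^2(\Omega)}$ pointwise in $t$. Integrating on $[0,t]$ and letting $\epsilon \to 0^+$ yields
$$\|\bm{u}_h(t)\|_{L^2(\Omega)} \leq \|\bm{u}_h(0)\|_{L^2(\Omega)} + \int_0^t \|\bm{f}(s)\|_{L^2(\Omega)}\,ds,$$
which is the claimed estimate.

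No step is genuinely hard: the real content sits in Lemmata~\ref{positiveah} and~\ref{positivech}, which already absorb the delicate DG jump terms, the upwind contribution, and the skew-symmetrization of the convective trilinear form so that the sum of dissipative and convective pieces is nonnegative. The only mildly technical issue is the square-root passage at the end, where $\|\bm{u}_h(t)\|_{L^2(\Omega)}$ may vanish; this is handled by the $\epsilon$-regularization above, or equivalently by integrating the $L^2$-squared form and invoking the integral form of Gronwall's lemma.
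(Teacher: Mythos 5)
Your proposal is correct and follows essentially the same route as the paper: test with $\bm{v}_h=\bm{u}_h$ and $q_h=p_h$, invoke the positivity of $a_h$, $c_h$, $d_h$, apply Cauchy--Schwarz, and integrate the resulting differential inequality. The only difference is your $\epsilon$-regularization when passing from the squared norm to the norm itself, which is a slightly more careful treatment of the point where $\|\bm{u}_h(t)\|_{L^2(\Omega)}$ may vanish than the paper's direct division.
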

\begin{proof}
Taking $\bm{v}_h = \bm{u}_h$ in \eqref{compact1} and $q_h = p_h$ in \eqref{compact2}, we have
\begin{align}\label{semi_st}
\frac{1}{2} \frac{d}{dt} \left\| \bm{u}_h \right\|_{L^2 (\Omega)}^2 + N_h(\bm{u}_h;\bm{u}_h)=(\bm{f},\ubold_h).  
\end{align}
It then follows from the identity given above, the positivity of $a_h, c_h, d_h$ (see  \eqref{positivedh} and Lemmata \ref{positiveah} and \ref{positivech}) and the Cauchy--Schwarz inequality that
\begin{align*}
   \| \bm{u}_h\|_{L^2(\Omega)} \frac{d}{dt} \left\| \ubold_h \right\|_{L^2(\Omega)} \leq \| \bm{f}\|_{{L}^2 (\Omega)}\| \ubold_h \|_{{L}^2(\Omega)},
\end{align*}
which implies
\begin{align}
    \frac{d}{dt} \left\| \ubold_h \right\|_{L^2 \left(\Omega\right)} & \leq \left\| \bm{f} \right\|_{L^2 \left(\Omega\right)}. \label{kbound_four}
\end{align}
Integrating \eqref{kbound_four} over $[0,t]$ yields
\begin{align*}
    \| \ubold_h(t) \|_{L^2 (\Omega)} \leq \left\| \ubold_h \left(0 \right) \right\|_{L^2 (\Omega)} + \| \bm{f}\|_{L^1 \left(0, t;{L}^2(\Omega)\right)}.
\end{align*}
%%%%%%%%
% With \eqref{semi_u_bound} and the positivity of $a_h,c_h,d_h,$ we complete the proof by integrating \eqref{semi_st} over $[0,t]$ which leads to
% %
% \begin{align*}
%     & \frac{1}{2} \left\| \ubold_h \left(t \right) \right\|_{\bm{L}^2 \left(\Omega\right)}^2 \leq \frac{1}{2} \left\| \ubold_h \left(0 \right) \right\|_{\bm{L}^2 \left(\Omega\right)}^2 + \int_{0}^{t} \big(\bm{f} \left(s \right),\ubold_h \left(s \right)\big) ds. 
% \end{align*}
% %
% and the observation
% %
% \begin{align}
% \nonumber  \int_{0}^{t} \big(\bm{f} \left(s \right),\ubold_h \left(s \right)\big) ds & \leq \frac{1}{2} \left\| \ubold_h \left(0 \right) \right\|_{\bm{L}^2 \left(\Omega\right)}^2 + \frac{3}{2} \left\| \bm{f} \right\|_{L^1 \left(0, t, \bm{L}^2 \left(\Omega\right) \right)}^2. 
% \end{align}
% %
% by Young's inequality.
The proof is complete.
\end{proof}

%%%%%%%%%%%%%%%%%%%%%%%%%
%%%%%%%%%%%%%%%%%%%%%%%%%
% \begin{remark}
% %%%%%%%%%%%%%%%%%%%%%%%%%
% 1) From the proof, we notice that the only requirement is actually $\zeta\geq 0.5$ on $\mathcal{F}_{h}^{\partial}$ while just requiring $\zeta\geq 0$ on $\mathcal{F}^{i}_{h}$. For simplicity, we define our upwind discontinuous scheme to be $\zeta=0.5$ on $\mathcal{F}_h$.\\[1.5ex]
% %%%%%%%%%%%%%%%%%
% 2) Note when proving energy stability, we do not specify the choice of pressure and velocity element pair, therefore this is a general result. The constrain of pressure and velocity element pair comes to play when we require the stability of pressure.
% \end{remark}

\subsection{Fully discrete stable scheme}
Let the time interval $[0,T]$ be partitioned into $0=t_0<t_1<\cdots<t_{N-1}<t_N=T$. For each $n$, let $\tau_n:=t_{n+1}-t_{n}$. We use the Runge--Kutta (RK) method (see, e.g., \cite{HNW1993}) to discretize the semi-discrete finite-dimensional system \eqref{compact}. In particular, an $m$-stage RK method is determined by parameters $\{a_{ij}\}^m_{i,j=1}$, $\{b_i\}_{i=1}^m$, $\{c_i\}_{i=1}^m$. Applying this RK method to the time direction in \eqref{compact}, we obtain the following fully discrete RK-DG method
\begin{equation}\label{fully}
    (\bm{u}_h^{n+1},\bm{v}_h)=(\bm{u}_h^n,\bm{v}_h)+\tau_n\sum_{i=1}^mb_i\left\{(\bm{f}^i,\bm{v}_h)-N_h(\bm{U}_h^i;\bm{v}_h)+b_h(\bm{v}_h,P_h^i)\right\},\quad\forall\bm{v}_h\in\bm{V}_h,
\end{equation}
where $\bm{f}^i=\bm{f}(t_n+c_i\tau_n)$, and the internal stages $\bm{U}_h^i\in\bm{V}_h$ and $P_h^i\in Q_h $ with $1\leq i\leq m$ are determined by
\begin{subequations}\label{internal}
\begin{align}
    &(\bm{U}_h^i,\bm{v}_h)=(\bm{u}_h^n,\bm{v}_h)+\tau_n\sum_{j=1}^ma_{ij}\left\{(\bm{f}^j,\bm{v}_h)-N_h(\bm{U}_h^j;\bm{v}_h)+b_h(\bm{v}_h,P_h^j)\right\},\quad\forall\bm{v}_h\in\bm{V}_h,\label{internal1}\\
    &b_h(\bm{U}_h^i,q_h)=0,\quad\forall q_h\in Q_h.\label{internal2}
\end{align}
\end{subequations}
Note that $0\leq c_i\leq1$ for all RK methods. The internal stages $\bm{U}_h^i$ and $P_h^j$ have useful approximation property. In fact,  $\bm{U}_h^i\approx\bm{u}_h(t_n+c_i\tau_n)$ and $P_h^j\approx p_h(t_n+c_j\tau_n)$.

Although the semi-discrete stability is proved in Theorem \ref{stability_theorem}, a traditional time discretization such as the family of Backward Differentiation Formulas (BDF) methods would usually destroy such nice dynamic structure. In general, it is quite delicate to design a stability preserving time integration technique for complex dynamical systems, see, e.g., \cite{GST2001} for stability preserving RK schemes for hyperbolic conservation laws and \cite{layton2008introduction,palha2017mass} for stable low order \emph{time difference} schemes for incompressible NS equations.

In this subsection, we consider a family of Gauss--Legendre collocation Runge--Kutta (GLRK) methods \cite{Butcher1964,HLW2006} that achieve arbitrarily high order accuracy. The parameters $\{c_i\}_{i=1}^m$ are zeros of the Gauss--Legendre polynomial $\frac{d^m}{ds^m}\big(s^m(1-s)^m\big)$. Then $\{a_{ij}\}_{i,j=1}^m$ and $\{b_i\}_{i=1}^m$ are uniquely determined by $\{c_i\}_{i=1}^m$. For instance, if $m=1$, then $c_1=\frac{1}{2}$, $a_{11}=\frac{1}{2}$, $b_1=1$, which is \emph{equivalent} to the Crank--Nicolson scheme. If $m=2,$ then $c_1=\frac{1}{2}-\frac{\sqrt{3}}{6}$, $c_2=\frac{1}{2}+\frac{\sqrt{3}}{6}$, $b_1=b_2=\frac{1}{2},$ and
$$a_{11}=\frac{1}{4},\quad a_{12}=\frac{1}{4}-\frac{\sqrt{3}}{6},\quad a_{21}=\frac{1}{4}+\frac{\sqrt{3}}{6},\quad a_{22}=\frac{1}{4}.$$
It is well-known that any GLRK method satisfies (see \cite{HLW2006})
\begin{align}
    &b_{i}b_j-b_ia_{ij}-b_ja_{ji}=0\quad\forall1\leq i,j\leq m,\label{GLRKcond}\\
    &\sum_{i=1}^mb_i=1,\quad b_i>0,\quad\forall 1\leq i\leq m.\label{condb}
\end{align}
The next theorem shows that the GLRK method preserves the semi-discrete stability given in Theorem \ref{stability_theorem}.
%%%%%%%%%%%%%%%%%%%%%%%%%%
\begin{theorem}[Fully Discrete Kinetic Energy Estimate]\label{fullystability}Let the assumptions in Theorem \ref{stability_theorem} hold. In addition, we assume one of the three following conditions holds: $(a)~\bm{\tau}_h(\bm{u}_h)=\nabla_h\bm{u}_h$; $(b)~\bm{\tau}_h(\bm{u}_h)=\nabla_h\bm{u}_h+\nabla_h\bm{u}_h^T$; $(c)$~$\|\bm{v}_h\|_{L^{2}(\Omega)}\leq C\|\bm{v}_h\|_{1,h}$ when $\bm{\tau}_h(\bm{u}_h)=\nabla_h\bm{u}_h+\nabla_h\bm{u}_h^T-\frac{2}{3}(\nabla_h\cdot\bm{u}_h)\mathbb{I}.$
Then we have the following fully discrete kinetic energy estimate
\begin{align*}
   \|\bm{u}_h^n\|^2_{L^2(\Omega)}\leq\|\bm{u}_h^0\|^2_{L^2(\Omega)}+C\sum_{j=0}^{n-1}\tau_j\sum_{i=1}^mb_i\|\bm{f}(t_j+c_i\tau_j)\|_{L^2(\Omega)}^2,\quad\forall n\geq1.
\end{align*}
\end{theorem}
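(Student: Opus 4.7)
The plan is to mimic the semi-discrete energy argument at the fully discrete level, exploiting the quadratic-invariant preservation property of Gauss--Legendre RK methods encoded in \eqref{GLRKcond}.

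First, I would rewrite the scheme in slope form. Define $\bm{K}_h^i\in\bm{V}_h$ by
\begin{align*}
(\bm{K}_h^i,\bm{v}_h)=(\bm{f}^i,\bm{v}_h)-N_h(\bm{U}_h^i;\bm{v}_h)+b_h(\bm{v}_h,P_h^i),\qquad\forall\bm{v}_h\in\bm{V}_h,
\end{align*}
so that \eqref{internal1} and \eqref{fully} read $\bm{U}_h^i=\bm{u}_h^n+\tau_n\sum_{j}a_{ij}\bm{K}_h^j$ and $\bm{u}_h^{n+1}=\bm{u}_h^n+\tau_n\sum_ib_i\bm{K}_h^i$. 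Testing with $\bm{v}_h=\bm{U}_h^i$ and invoking the discrete divergence constraint \eqref{internal2} to kill the pressure term yields the stagewise identity $(\bm{K}_h^i,\bm{U}_h^i)=(\bm{f}^i,\bm{U}_h^i)-N_h(\bm{U}_h^i;\bm{U}_h^i)$.

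Next, I would expand $\|\bm{u}_h^{n+1}\|_{L^2(\Omega)}^2$ using the slope form, obtaining
\begin{align*}
\|\bm{u}_h^{n+1}\|_{L^2(\Omega)}^2=\|\bm{u}_h^n\|_{L^2(\Omega)}^2+2\tau_n\sum_ib_i(\bm{U}_h^i,\bm{K}_h^i)+\tau_n^2\sum_{i,j}\bigl[b_ib_j-b_ia_{ij}-b_ja_{ji}\bigr](\bm{K}_h^i,\bm{K}_h^j),
\end{align*}
where the cross term is obtained by substituting $(\bm{u}_h^n,\bm{K}_h^i)=(\bm{U}_h^i,\bm{K}_h^i)-\tau_n\sum_ja_{ij}(\bm{K}_h^j,\bm{K}_h^i)$ and symmetrizing in $(i,j)$. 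Invoking the GLRK condition \eqref{GLRKcond} eliminates the $O(\tau_n^2)$ bracket, leaving the clean identity
\begin{align*}
\|\bm{u}_h^{n+1}\|_{L^2(\Omega)}^2=\|\bm{u}_h^n\|_{L^2(\Omega)}^2+2\tau_n\sum_ib_i\bigl[(\bm{f}^i,\bm{U}_h^i)-N_h(\bm{U}_h^i;\bm{U}_h^i)\bigr].
\end{align*}

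The third step is to absorb $(\bm{f}^i,\bm{U}_h^i)$ into $N_h(\bm{U}_h^i;\bm{U}_h^i)$. From Lemmas~\ref{positiveah} and~\ref{positivech} together with \eqref{positivedh} I get $N_h(\bm{U}_h^i;\bm{U}_h^i)\geq C\nu\|\bm{U}_h^i\|_{1,h}^2$. To translate this into an $L^2$ bound I use the inequality $\|\bm{U}_h^i\|_{L^2(\Omega)}\leq C\|\bm{U}_h^i\|_{1,h}$: in case~(c) this is the stated hypothesis, while in cases~(a) and~(b) it follows from the standard discrete Poincar\'e and discrete Korn inequalities on broken polynomial spaces with boundary jump penalization (e.g., \cite{Brenner2004,BS2008}). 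Young's inequality with a parameter proportional to $\nu$ then gives $2(\bm{f}^i,\bm{U}_h^i)\leq C\nu^{-1}\|\bm{f}^i\|_{L^2(\Omega)}^2+N_h(\bm{U}_h^i;\bm{U}_h^i)$, so the viscous term dominates and
\begin{align*}
\|\bm{u}_h^{n+1}\|_{L^2(\Omega)}^2\leq\|\bm{u}_h^n\|_{L^2(\Omega)}^2+C\tau_n\sum_ib_i\|\bm{f}^i\|_{L^2(\Omega)}^2.
\end{align*}
Telescoping over $n$ time steps, together with $b_i>0$ and $\sum_ib_i=1$ from \eqref{condb}, delivers the claimed bound.

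The main obstacle is the second step: securing the exact cancellation of the $O(\tau_n^2)$ quadratic form, which is precisely why a non-stability-preserving scheme such as BDF would fail here; once the GLRK symplectic-type identity \eqref{GLRKcond} is exploited, the remaining work is routine. A secondary subtlety worth flagging is that the coercivity-to-$L^2$ upgrade in case~(b) relies on a broken Korn inequality and is therefore less immediate than in case~(a), which is why the authors split the hypothesis into three cases.
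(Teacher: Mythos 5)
Your proposal is correct and follows essentially the same route as the paper's proof: your slope $\bm{K}_h^i$ is the paper's $\bm{F}_h^i$, the elimination of the pressure via the stage constraint \eqref{internal2}, the cancellation of the $O(\tau_n^2)$ quadratic form through \eqref{GLRKcond}, and the final absorption of the forcing using coercivity of $a_h$, the discrete Poincar\'e/Korn inequalities (or hypothesis (c)), and Young's inequality with parameter proportional to $\nu$ are exactly the steps taken there. No gaps.
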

\begin{proof}
Since $N_h(\bm{u}_h;\bm{v}_h)$ is linear in $\bm{v}_h$, there exists a unique  $R_h(\bm{u}_h)\in\bm{V}_h$ such that
$$(R_h(\bm{u}_h),\bm{v}_h)=N_h(\bm{u}_h;\bm{v}_h)\quad\text{ for all }\bm{v}_h\in\bm{V}_h.$$
Let $B_h: \bm{V}_h\rightarrow Q_h$ denote the linear operator associated with $b_h$, i.e., $$(B_h\bm{v}_h,q_h)=b_h(\bm{v}_h,q_h)\quad\text{ for all } q_h\in Q_h.$$ Therefore, \eqref{fully} and \eqref{internal} translate into 
\begin{subequations}
\begin{align}
    &\bm{u}_h^{n+1}=\bm{u}_h^n+\tau_n\sum_{i=1}^mb_i\bm{F}_h^i,\label{fullya}\\
    &\bm{U}_h^i=\bm{u}_h^n+\tau_n\sum_{j=1}^ma_{ij}\bm{F}_h^j,\label{internal1a}\\
    &(\bm{U}_h^i,B^T_hq_h)=(B_h\bm{U}_h^i,q_h)=0,\quad\forall q_h\in Q_h\label{Bh},
\end{align}
\end{subequations}
where  $\bm{F}_h^i:=\bm{f}^i-R_h(\bm{U}_h^i)+B_h^TP_h^i.$ It follows from \eqref{fullya} that
\begin{equation}\label{total}
    \|\bm{u}_h^{n+1}\|_{L^2(\Omega)}^2=\|\bm{u}_h^n\|_{L^2(\Omega)}^2+\tau_n\sum_{i=1}^mb_i\left(\bm{F}_h^i,\bm{u}_h^n\right)+\tau_n\sum_{j=1}^mb_j\left(\bm{u}_h^n,\bm{F}_h^j\right)+\tau^2_n\sum_{i,j=1}^mb_ib_j\left(\bm{F}_h^i,\bm{F}_h^j\right).
\end{equation}
Using \eqref{internal1a}, the second term on the right hand side becomes
\begin{equation}\label{2nd}
\begin{aligned}
    &\tau_n\sum_{i=1}^mb_i\left(\bm{F}_h^i,\bm{u}_h^n\right)=\tau_n\sum_{i=1}^mb_i\left(\bm{F}_h^i,\bm{U}_h^i-\tau_n\sum_{j=1}^ma_{ij}\bm{F}_h^j\right)\\
    &=\tau_n\sum_{i=1}^mb_i\left(\bm{f}^i-R_h(\bm{U}_h^i),\bm{U}_h^i\right)-\tau_n^2\sum_{i,j=1}^mb_ia_{ij}(\bm{F}^i_{h},\bm{F}^j_{h}),
    \end{aligned}
\end{equation}
where \eqref{Bh} is used in the last equality.
Similarly, it holds that
\begin{equation}\label{3rd}
    \tau_n\sum_{j=1}^mb_j\left(\bm{u}_h^n,\bm{F}_h^j\right)=\tau_n\sum_{j=1}^m b_j\left(\bm{f}^j-R_h(\bm{U}_h^j),\bm{U}_h^j\right)-\tau_n^2\sum_{i,j=1}^mb_ja_{ji}(\bm{F}^i,\bm{F}^j).
\end{equation}
Collecting \eqref{total}, \eqref{2nd}, \eqref{3rd} and using \eqref{GLRKcond}, we obtain
\begin{equation}
\begin{aligned}
    \|\bm{u}_h^{n+1}\|_{L^2(\Omega)}^2&=\|\bm{u}_h^n\|_{L^2(\Omega)}^2+2\tau_n\sum_{i=1}^mb_i\left(\bm{f}^i-R_h(\bm{U}_h^i),\bm{U}_h^i\right)+\tau^2_n\sum_{i,j=1}^m(b_ib_j-b_ia_{ij}-b_ja_{ji})\left(\bm{F}_h^i,\bm{F}_h^j\right)\\
    &=\|\bm{u}_h^n\|_{L^2(\Omega)}^2+2\tau_n\sum_{i=1}^mb_i\left\{\left(\bm{f}^i,\bm{U}_h^i\right)-\left(R_h(\bm{U}_h^i),\bm{U}_h^i\right)\right\}.
    \end{aligned}
\end{equation}
For $\bm{v}_h\in \bm{V}_h$, recall the discrete Poincar\'e inequality (cf.~\cite{BS2008,Ern2012}) 
\begin{equation}\label{dispoin}
    \|\bm{v}_h\|_{L^2(\Omega)}\leq C\left(\|\nabla_h\bm{v}_h\|_{L^2(\Omega)}+\|h^{-\frac{1}{2}}\llbracket\bm{v}_h\rrbracket\|_{L^2(\mathcal{F}_h)}\right),
\end{equation} 
and the discrete Korn's inequality (see Eq.~(1.19) in \cite{Brenner2004})   \begin{equation}\label{diskorn}
    \|\nabla_h\bm{v}_h\|_{L^2(\Omega)}\leq C\left(\|\nabla_h\bm{v}_h+\nabla_h\bm{v}_h^T\|_{L^2(\Omega)}+\|h^{-\frac{1}{2}}\llbracket\bm{v}_h\rrbracket\|_{L^2(\mathcal{F}_h)}\right).
\end{equation}
Using \eqref{dispoin}, \eqref{diskorn} and the definition of $\|\cdot\|_{1,h}$, it holds that 
\begin{equation}\label{Korn}
    \|\bm{v}_h\|_{L^2(\Omega)}\leq C\|\bm{v}_h\|_{1,h},\quad\forall\bm{v}_h\in\bm{V}_h,
\end{equation}
when $\bm{\tau}_h(\bm{v}_h)=\nabla_h\bm{v}_h$ or $\bm{\tau}_h(\bm{v}_h)=\nabla_h\bm{v}_h+\nabla_h\bm{v}_h^T$.
Otherwise, the previous inequality follows from the assumption (c). Now combining Lemmata \ref{positiveah}, \ref{positivech}, Equation \eqref{positivedh} and using \eqref{Korn}, \eqref{condb}, we have
\begin{equation}
    \begin{aligned}
    &\sum_{i=1}^mb_i\left\{\left(\bm{f}^i,\bm{U}_h^i\right)-\left(R_h(\bm{U}_h^i),\bm{U}_h^i\right)\right\}\leq\sum_{i=1}^mb_i\left(\left(\bm{f}^i,\bm{U}_h^i\right)-\nu a_h(\bm{U}_h^i,\bm{U}_h^i)\right)\\
    &\quad\leq\sum_{i=1}^mb_i\left(\|\bm{f}^i\|_{L^2(\Omega)}\|\bm{U}_h^i\|_{L^2(\Omega)}-\nu\|\bm{U}_h^i\|^2_{1,h}\right)\\
    &\quad\leq\sum_{i=1}^mb_i\left(\frac{\varepsilon^{-1}}{2}\|\bm{f}^i\|_{L^2(\Omega)}^2+\frac{\varepsilon}{2}\|\bm{U}_h^i\|_{L^2(\Omega)}^2-\nu\|\bm{U}_h^i\|^2_{1,h}\right)\\
    &\quad\leq\sum_{i=1}^mb_i\left(\frac{\varepsilon^{-1}}{2}\|\bm{f}^i\|_{L^2(\Omega)}^2-\left(\nu-\frac{\varepsilon C}{2}\right)\|\bm{U}_h^i\|^2_{1,h}\right),
    \end{aligned}
\end{equation}
where $C$ is in \eqref{Korn} and $\varepsilon>0.$
It then follows from the estimate given above with $\varepsilon=2C^{-1}\nu$ that
\begin{equation*}
    \|\bm{u}_h^{n+1}\|^{2}_{L^2(\Omega)}\leq\|\bm{u}_h^{n}\|^{2}_{L^2(\Omega)}+\tau_n\sum_{i=1}^m\frac{\varepsilon^{-1}}{2}b_i\|\bm{f}^i\|_{L^2(\Omega)}^2.
\end{equation*}
The proof is complete.
\end{proof}
\begin{remark}
In order to prove \eqref{Korn} with $\bm{\tau}_h(\bm{v}_h)=\nabla_h\bm{v}_h+\nabla_h\bm{v}_h^T-\frac{2}{3}(\nabla_h\cdot\bm{v}_h)\mathbb{I}$, one needs the corresponding discrete Korn's inequality \eqref{diskorn},
which is not known in the literature. A possible proof should rely on the characterization of the kernel $\{\bm{v}\in [H^1(K)]^d: \bm{\tau}_h(\bm{v})=\bm{0}\}$ on each element $K\in\mathcal{T}_h$ and estimation of  suitable semi-norm associated with that kernel, see \cite{Brenner2004}. %the construction of a projection operator $\Pi$ from $[H^{1}(T)]^{d}$ to the kernel of $\bm{\tau}_h$, as is shown in \cite{breit2017trace}, such that  $\Phi(\bm{u}-\Pi\bm{u})\leq C\sum_{T\in\mathcal{T}_h}\|\nabla_h\bm{u}+\nabla_h\bm{u}^T-\frac{2}{3}(\nabla\cdot\bm{u})\mathbb{I}\|_{L^{2}(T)}$, where $\bm{u}\in [H^{1}(\mathcal{T}_h)]^{d}$ and $\Phi$ is a semi-norm with certain properties as mentioned in \cite{Brenner2004}.
\end{remark}

For each $n\geq0,$ let  $$\delta_t\bm{u}_h^n:=\frac{\bm{u}_h^{n+1}-\bm{u}_h^n}{\tau_n},\quad\bm{u}_h^{n+\frac{1}{2}}:=\frac{\bm{u}_h^{n+1}+\bm{u}_h^n}{2},\quad{\bm{f}}^{n+\frac{1}{2}}:=\bm{f}\left(t_{n}+\frac{1}{2}\tau_n\right).$$ 
The Crank--Nicolson time discretization to \eqref{compact} can be written as
\begin{subequations}\label{CrankNicolson}
\begin{align} &\left(\delta_t\bm{u}^n_h,\vbold_h\right) + N_h\left(\bm{u}_h^{n+\frac{1}{2}};\bm{v}_h \right) - b_h \left( \bm{v}_h, p_h^{n+\frac{1}{2}}\right)=(\bm{f}^{n+\frac{1}{2}},\vbold_h),\quad\forall\bm{v}_h\in\bm{V}_h,\label{fulldis_1}\\
& b_h \left(\bm{u}^{n+\frac{1}{2}}_h, q_h \right) = 0,\quad\forall q_h\in Q_h.\label{fulldis_2}
\end{align}
\end{subequations}
Here $p_h^{n+\frac{1}{2}}$ approximates $p_h(t_n+\frac{1}{2}\tau_n)$.
The popular Crank--Nicolson scheme can be written as the 1-stage GLRK ($m=1$, $a_{11}=\frac{1}{2}$, $b_1=1$, $c_1=\frac{1}{2}$) as mentioned before. Therefore, we obtain the unconditional stability of the fully discrete scheme \eqref{CrankNicolson} from Theorem \ref{fullystability}.
%%%%%%%%%%%%%%%%%%%%%%%%
%%%%%%%%%%%%%%%%%%%%%%%
\section{Three methods from the unified formulation} \label{secDG}
In this section, we derive three pressure-robust methods from the unified scheme \eqref{compact} proposed in Section~\ref{gen_method_sec}. For a positive integer $k$, we introduce the $H^1$-conforming Taylor--Hood finite element spaces 
\begin{equation}\label{VQH1}
\begin{aligned}
&\bm{V}_h^{\text{C}}:= \left\{\bm{v}_h \in [C^{0} \left(\Omega\right)]^{d}: \bm{v}_h |_{K} \in  \left[\mathcal{P}_{k+1} \left( K \right) \right]^d,~\forall K \in \mathcal{T}_h~\text{and}~\bm{v}_h|_{\partial\Omega}=0\right\},\\
&Q_h^{\text{C}}:= \left\{q_h \in C^{0} \left( \Omega \right)\cap L^{2}_{0}\left(\Omega\right):  q_{h} |_{K} \in\mathcal{P}_{k} \left( K \right),~\forall K \in \mathcal{T}_h \right\}.
\end{aligned}
\end{equation}
We shall also make use of the $\mathcal{P}_{k+1}\times\mathcal{P}_{k}$ discontinuous Galerkin spaces
\begin{equation}\label{VQDG}
    \begin{aligned}
&\bm{V}_h^{\text{DG}}:= \left\{ \bm{v}_h\in[{L}^{2}(\Omega)]^d: \bm{v}_h |_{K} \in  [\mathcal{P}_{k+1} \left( K \right)]^{d}, \forall K \in \mathcal{T}_h \right\},\\[1.5ex]
&{Q}^{\text{DG}}_h:= \left\{q_h\in L^{2}_{0}(\Omega): q_{h} |_{K} \in \mathcal{P}_k \left( K \right), \forall K \in \mathcal{T}_h \right\},
\end{aligned}
\end{equation}
where $k$ could be any nonnegative integer in \eqref{VQDG}.
Let $H(\text{div};\Omega):=\{\bm{v}\in[L^2(\Omega)]^d: \nabla\cdot\bm{v}\in L^2(\Omega)\}$. Let
$$\bm{\mathcal{Q}}_k(K):=[\mathcal{P}_{k+1}(K)]^d\quad\text{ or }\quad\bm{\mathcal{Q}}_k(K):=[\mathcal{P}_{k}(K)]^d+\mathcal{P}_{k}(K)\bm{x},$$
which is the Raviart--Thomas \cite{RT1977} or Brezzi--Douglas--Marini \cite{BDM1985} shape function space, respectively. The  $H(\text{div})$-conforming finite element space is 
\begin{align}\label{VQHdiv}
&\bm{V}_h^{\text{DIV}}:= \left\{ \bm{v}_h \in H\left(\text{div}; \Omega \right):  \bm{v}_h |_{K} \in  \bm{\mathcal{Q}}_k \left( K \right),~\forall K \in \mathcal{T}_h~\text{and}~\bm{v}_h\cdot\bm{n}|_{\partial\Omega}=0\right\}.
\end{align}
%%%%%%%%%%%%%%%%%%%%
%%%%%%%%%%%%%%%%%%%%%%%%%%%%%%
\subsection{Pressure robustness}
Consider the following discrete divergence-free space
\begin{align*}
\bm{Z}_{h}:=\{\bm{v}_h\in\bm{V}_h:b_h \left(\vbold_h, q_h \right) = 0,~\forall q_h\in Q_h\}.
\end{align*}
For error estimation of velocity in \eqref{compact}, the term $|b_h \left(\vbold_h, p-p_h \right)|$ with $\bm{v}_h\in\bm{Z}_h$ measures the inconsistency of convective bilinear form and serves as a guide to design $d_h(\bm{u}_h,\bm{v}_h)$. Note that this inconsistency is directly related to the concept of pressure robustness \cite{john2017divergence}, that is, the error in pressure induces a velocity error. The goal of $d_h$ is to reduce the influence of pressure approximation on velocity approximation, which in this paper is said to improve pressure robustness. For any $\vbold_h\in\bm{Z}_h$, we have $b_h \left(\vbold_h, p_h \right)=0$ and thus
\begin{align*}
&|b_h \left(\vbold_h, p - p_h\right)|=|b_h \left(\vbold_h, p \right)|\\
&=|(\nabla_h \cdot \vbold_h,p)-\ipbf{\llbracket \vbold_h \rrbracket\cdot\bm{n}}{\llcurve p \rrcurve}|\\
&\leq \|\nabla_h\cdot\bm{v}_h\|_{L^2(\Omega)}\|p\|_{L^2(\Omega)}+\frac{\varepsilon^{-1}}{2}\|\llbracket \vbold_h \rrbracket\cdot\bm{n}\|_{L^2(\mathcal{F}_h)}^{2}+\frac{\varepsilon}{2}\|\llcurve p \rrcurve\|^2_{L^{2}(\mathcal{F}_h)},
\end{align*}
where $0<\varepsilon\ll1$ is a small number. 
In view of $\|\nabla_h\cdot\bm{v}_h\|_{L^2(\Omega)}$ and $\|\llbracket \vbold_h \rrbracket\cdot\bm{n}\|_{L^2(\mathcal{F}_h)}$ in the previous  estimate, it is reasonable to add the penalization term
%%%%%%%%%%%%%%%%%%%%%%%%%%%%%
\begin{align}\label{dh}
d_h(\bm{u}_h,\bm{v}_h):=\gamma_{gd}(\nabla_h\cdot\bm{u}_h,\nabla_h\cdot\bm{v}_h)+\left\langle \gamma_{F}(\llbracket \ubold_h \rrbracket\cdot\nbold),\llbracket \vbold_h \rrbracket\cdot\nbold\right\rangle_{\mathcal{F}_h},
\end{align}
where $\gamma_{gd}$, $\{\gamma_F\}_{F\in\mathcal{F}_h}\geq 0$ are sufficiently large (piecewise) constants.
%%%%%%%%%%%%%%%%%%%%%%%%%%%%
\subsection{\texorpdfstring{$H^{1}$}--conforming method}
Let $\bm{u}_h, \bm{v}_h\in\bm{V}^{\text{C}}_h$ and $p_h, q_h\in Q^{\text{C}}_h.$  Then the form \eqref{ahsym} simplifies to
%%%%%%%%%%%%%%%%%%%%%%%%%%%%
\begin{gather}
a_h \left(\ubold_h, \vbold_h \right)=\begin{cases}
  (\nabla\ubold_h,\nabla \vbold_h) &\text{when $\bm{\tau}_h(\bm{u}_h):=\nabla_h\bm{u}_h$ }, \\[1.5ex]   
  (\nabla\ubold_h,\nabla \vbold_h)+(\nabla\cdot\bm{u}_h,\nabla\cdot\bm{v}_h) &\text{when $\bm{\tau}_h(\bm{u}_h):=\nabla_h\bm{u}_h+\nabla_h\bm{u}_h^{T}$},  \\[1.5ex]  
  (\nabla\ubold_h,\nabla \vbold_h)+\frac{1}{3}(\nabla\cdot\bm{u}_h,\nabla\cdot\bm{v}_h) &\text{when $\bm{\tau}_h(\bm{u}_h):=\nabla_h\bm{u}_h+\nabla_h\bm{u}_h^{T}-\frac{2}{3}\left(\nabla_h\cdot \bm{u}_h\right) \mathbb{I}$},
\end{cases}
\label{ah_H1}
\end{gather}
where the identity $(\nabla\bm{u}_h^{T},\nabla\bm{v}_h)=(\nabla\cdot\bm{u}_h,\nabla\cdot\bm{v}_h)$ under $\bm{u}_h|_{\partial\Omega}=\bm{0}$ is used. The forms \eqref{bh_form}, \eqref{convective_form}, \eqref{dh} simplify to
%%%%%%%%%%%%%%%%%
\begin{align*}
&b_h(\bm{u}_h,\bm{v}_h)=(\nabla_h \cdot \vbold_h,q_h),\\
&c_h \left(\bm{u}_h; \ubold_h, \vbold_h \right) = (\bm{u}_h \cdot \nabla \ubold_h,\vbold_h) + \frac{1}{2} (\left(\nabla \cdot \bm{u}_h \right) \ubold_h,\vbold_h), \\
&d_h(\bm{u}_h,\bm{v}_h)=\gamma_{gd}(\nabla_h\cdot\bm{v}_h,\nabla_h\cdot\bm{u}_h).
\end{align*}
%%%%%%%%%%%%%%%
Therefore,
the corresponding scheme \eqref{compact} with $\bm{V}_h=\bm{V}_h^\text{C}$, $Q_h=Q_h^\text{C}$ recovers the skew symmetric formulation \cite{charnyi2017conservation,layton2008introduction} with grad-div stabilization \cite{franca1988two,john2017divergence}. %note that the use of the full viscous stress tensor also enables the scheme with some natural build in stabilization. 
The inf-sup condition is guaranteed by 
%%%%%%%%%%%%%%%%%%%%%%%
\begin{align*}
C\|q_h\|_{L^{2}(\Omega)}\leq \sup_{\bm{v}_h\in\bm{V}_h^{\text{C}}\setminus \{\bm{0}\}}\frac{(q_h ,\nabla \cdot \vbold_h)}{\left\|\vbold_h \right\|_{H^1(\Omega)}},\quad\forall q_h \in Q_h^{\text{C}}.
\end{align*}
%%%%%%%%%%%%%%%%%%%%%%%%%%%%%%%%
\begin{remark}
The $H^1$-conforming method has a minimum number of degrees of freedom, hence significantly reduces the computational cost. %However, note that all numerical fluxes especially the upwind flux vanish as a result of continuity,
However,  the $H^1$-conforming method is not able to handle convection dominated flows. %To remedy the situation, one may want to use the SUPG technique \cite{brooks1982streamline,donea2003finite} to add diffusion along the the streamline.
%Additional grad-div stabilization might increase the pressure robustness in certain cases, see for example \cite{jenkins2014parameter} for a detailed discussion on the parameter choice for Stokes problem.     
\end{remark}
%%%%%%%%%%%%%%%%%%%%%%%%%%%%%%%%
\subsection{\texorpdfstring{$H(\emph{div})$}--conforming method}
%%%%%%%%%%%%
Let $\bm{u}_h, \bm{v}_h\in\bm{V}^{\text{DIV}}_h$ and $p_h, q_h\in Q^{\text{DG}}_h.$ It follows from $\llbracket\bm{u}_h\cdot\bm{n}\rrbracket=0$ on $\mathcal{F}_h$, the inclusion  $\nabla\cdot\bm{V}_{h}^{\text{DIV}}\subset Q_h^{\text{DG}}$, and  \eqref{compact2} that 
$$\nabla\cdot\bm{u}_h=0\quad\text{ on }\Omega.$$ 
Therefore the full viscous strain tensor $\bm{\tau}_h(\bm{u}_h)=\nabla_h\bm{u}_h+\nabla_h\bm{u}^T_h-\frac{2}{3}(\nabla\cdot\ubold_h)\mathbb{I}$ and the symmetric gradient strain tensor $\bm{\tau}_h(\bm{u}_h)=\nabla_h\bm{u}_h+\nabla_h\bm{u}^T_h$ coincide. Then $a_h$ and $b_h$ reduce to
%%%%%%%%%%%%%%%%%%%
\begin{gather*}
a_h \left(\ubold_h, \vbold_h \right)=\begin{cases}
   (\nabla_h\bm{u}_h,\nabla_h \vbold_h)   -\ipbf{\llbracket \vbold_h \rrbracket}{\llcurve \nabla_h\bm{u}_h \rrcurve \bm{n}}  
\\\quad-\ipbf{\llbracket \ubold_h \rrbracket}{\llcurve  \nabla_h\bm{v}_h \rrcurve \bm{n}} 
+\ipbf{\eta h^{-1} \llbracket \ubold_h \rrbracket}{\llbracket \vbold_h \rrbracket}, &\text{when $\bm{\tau}_h(\bm{u}_h):=\nabla_h\bm{u}_h$,}   \\ 
  (\nabla_h\bm{u}_h+\nabla_h\bm{u}_h^{T},\nabla_h \vbold_h) -\ipbf{\llbracket \vbold_h \rrbracket}{\llcurve \nabla_h\bm{u}_h+\nabla_h\bm{u}_h^{T} \rrcurve \bm{n}}  
\\\quad-\ipbf{\llbracket \ubold_h \rrbracket}{\llcurve  \nabla_h\bm{v}_h+\nabla_h\bm{v}_h^{T} \rrcurve \bm{n}} 
+\ipbf{\eta h^{-1} \llbracket \ubold_h \rrbracket}{\llbracket \vbold_h \rrbracket}  &\parbox{15em}{when $\bm{\tau}_h(\bm{u}_h):=\nabla_h\bm{u}_h+\nabla_h\bm{u}_h^{T}$ or\\ $\nabla_h\bm{u}_h+\nabla_h\bm{u}_h^{T}-\frac{2}{3}\left(\nabla_h\cdot \bm{u}_h\right) \mathbb{I}$.}
\end{cases}
\end{gather*} 
%%%%%%%%%%%%%%%%%%%%
and $$b_h(\bm{u}_h,\bm{v}_h)=(\nabla_h \cdot \vbold_h,q_h),$$ respectively. Similarly, using $\nabla\cdot\bm{u}_h=0$ and $\llbracket\bm{u}_h\cdot\bm{n}\rrbracket=0$, we obtain the simplified convective term
\begin{align*}
c_h \left(\bm{u}_h; \ubold_h, \vbold_h \right) = (\bm{u}_h \cdot \nabla_h \ubold_h,\vbold_h) -\iipbf{ \left( \bm{u}_h \cdot \nbold \right) \llbracket \ubold_h \rrbracket}{\llcurve \vbold_h \rrcurve}+  \iipbf{\zeta\left| \bm{u}_h \cdot \nbold \right|\llbracket \ubold_h \rrbracket}{\llbracket \vbold_h \rrbracket},
\end{align*}{}
and the vanishing penalty term $d_h$ \eqref{dh}, i.e.,
$$d_h(\bm{u}_h,\bm{v}_h)=0.$$
In this case, the scheme \eqref{compact} with $\bm{V}_h=\bm{V}_h^{\text{DIV}}$, $Q_h={Q}^{\text{DG}}_h$ reduces to the classical $H(\text{div})$-conforming method \cite{guzman2016h,schroeder2018divergence}, but with symmetrical gradient formulation for the viscous bilinear form. 
%%%%%%%%%%%%%%
% \begin{remark}
% For H(\emph{div}) conforming method, we have $\nabla\cdot\bm{W}_{h}^{RT_{k}}\subset Q_h^{DG_{k}}$, thus $\nabla\cdot\bm{u}_h=0$ pointwise within each element. Therefore we may remove the $-\frac{2}{3}\nabla\cdot\ubold$ from the definition of $\bm{\tau}$ ahead of time before discretization without affecting consistency, that's exactly how we obtain the expression of $a_h$ here.
% \end{remark}{}
%%%%%%%%%%%%%%%
% \begin{remark}
% Since we set $\bm{v}_h=\bm{u}_h$ in the proof of energy stability in section \ref{gen_method_sec}, expression \eqref{diff_bilinear} reduces to $a_h$ here, therefore the stability proof is also valid when $H(div)$ conforming.
% \end{remark}{}
%%%%%%%%%%%%%%%
Finally the inf-sup condition is guaranteed by 
%%%%%%%%%%%%%%%%%%%%%%%
\begin{align*}
C\|q_h\|_{L^{2}(\Omega)}\leq \underset{\vbold_h \in \bm{V}_h^{\text{DIV}} \setminus \{\bm{0}\}}{\sup} \frac{(q_h ,\nabla \cdot \vbold_h)}{\left\|\vbold_h \right\|_{H(\text{div};\Omega)}},\quad\forall q_h \in Q_h^{\text{DG}}.
\end{align*}
%%%%%%%%%%%%%%%%%%%%%
\begin{remark}
The $H(\emph{div})$-conforming method is naturally pressure robust, since the pressure approximation is completely decoupled from the velocity approximation \cite{guzman2016h,john2017divergence}. With the help of upwind flux ($\zeta\geq 0$), the $H(\emph{div})$-conforming method could deal with convection dominated flow. 
\end{remark}
%%%%%%%%%%%%%%%%%%%%%
\subsection{Discontinuous Galerkin method}\label{subsecDG}
The scheme \eqref{compact} with $\bm{V}_h=\bm{V}^{\text{DG}}_h$ and $Q_h=Q_h^{\text{DG}}$ yields our DG scheme. If the normal component of the velocity is penalized sufficiently, then we obtain that for $\bm{v}_h\in\bm{Z}_h$, 
\begin{align*}
0=b_h \left(\vbold_h, q_h \right) =(\nabla_h \cdot \vbold_h,q_h)-\ipbf{\llbracket \vbold_h \rrbracket\cdot\nbold_F}{\llcurve q_h \rrcurve}\approx (\nabla_h \cdot \vbold_h,q_h),\quad\forall q_h\in Q_h^{\text{DG}}.
\end{align*}
By $\nabla_h\cdot\bm{V}_h^{\text{DG}}\subseteq Q_h^{\text{DG}}$ and the previous reasoning, we may further conclude that 
\begin{align*}
\nabla_h\cdot\bm{v}_h\approx 0.
\end{align*}
%%%%%%%%%%%%%%%%
Hence $d_h$ should be of the form 
\begin{align}
d_h(\bm{u}_h,\bm{v}_h)=\sum_{F\in\mathcal{F}_h}\gamma_{F}\langle\llbracket \vbold_h \rrbracket\cdot\nbold_F,\llbracket \ubold_h \rrbracket\cdot\nbold_F\rangle_F.
\label{Q_1}
\end{align}
%%%%%%%%%%%%%%%
Assuming $\gamma_F=\gamma h^{-1}_F$, (cf.~\cite{akbas2018analogue,guzman2016h}), the penalty term $d_h$ further simplifies to
%%%%%%%%%%%%%
\begin{align}
d_h(\bm{u}_h,\bm{v}_h)=\gamma\sum_{F\in\mathcal{F}_h}h_F^{-1}\langle\llbracket \vbold_h \rrbracket\cdot\nbold_F,\llbracket \ubold_h \rrbracket\cdot\nbold_F\rangle_F,\label{Penalty_expression}
\end{align}
where $\gamma$ is a sufficiently large parameter. The symmetric form in \eqref{ahsym} is given as
\begin{gather*}
a_h \left(\ubold_h, \vbold_h \right)=\begin{cases}
   (\nabla_h\bm{u}_h,\nabla_h \vbold_h)   -\ipbf{\llbracket \vbold_h \rrbracket}{\llcurve \nabla_h\bm{u}_h \rrcurve \bm{n}}  
\\\quad-\ipbf{\llbracket \ubold_h \rrbracket}{\llcurve  \nabla_h\bm{v}_h \rrcurve \bm{n}} 
+\ipbf{\eta h^{-1} \llbracket \ubold_h \rrbracket}{\llbracket \vbold_h \rrbracket}, &\text{when $\bm{\tau}_h(\bm{u}_h):=\nabla_h\bm{u}_h$,} \\  
  (\nabla_h\bm{u}_h+\nabla_h\bm{u}_h^{T},\nabla_h \vbold_h) -\ipbf{\llbracket \vbold_h \rrbracket}{\llcurve \nabla_h\bm{u}_h+\nabla_h\bm{u}_h^{T} \rrcurve \bm{n}}  
\\\quad-\ipbf{\llbracket \ubold_h \rrbracket}{\llcurve  \nabla_h\bm{v}_h+\nabla_h\bm{v}_h^{T} \rrcurve \bm{n}} 
+\ipbf{\eta h^{-1} \llbracket \ubold_h \rrbracket}{\llbracket \vbold_h \rrbracket}, &\parbox{15em}{when $\bm{\tau}_h(\bm{u}_h):=\nabla_h\bm{u}_h+\nabla_h\bm{u}_h^{T}$,}\\
  (\nabla_h\bm{u}_h+\nabla_h\bm{u}_h^{T}-\frac{2}{3}\left(\nabla_h\cdot \bm{u}_h\right) \mathbb{I},\nabla_h \vbold_h)+\ipbf{\eta h^{-1} \llbracket \ubold_h \rrbracket}{\llbracket \vbold_h \rrbracket}\\ \quad-\ipbf{\llbracket \vbold_h \rrbracket}{\llcurve \nabla_h\bm{u}_h+\nabla_h\bm{u}_h^{T}-\frac{2}{3}\left(\nabla_h\cdot \bm{u}_h\right) \mathbb{I} \rrcurve \bm{n}}  
\\\quad-\ipbf{\llbracket \ubold_h \rrbracket}{\llcurve  \nabla_h\bm{v}_h+\nabla_h\bm{v}_h^{T}-\frac{2}{3}\left(\nabla_h\cdot \bm{v}_h\right) \mathbb{I} \rrcurve \bm{n}},  &\parbox{15em}{when $\bm{\tau}_h(\bm{u}_h):=\nabla_h\bm{u}_h+\nabla_h\bm{u}_h^{T}\\-\frac{2}{3}\left(\nabla_h\cdot \bm{u}_h\right) \mathbb{I}$.}
\end{cases}
\end{gather*}
%%%%%%%%%%
For $b_h(\ubold_h,\vbold_h)$ and $c_h(\bm{u}_h;\bm{u}_h,\bm{v}_h)$, we use the same form as in \eqref{bh_form} and \eqref{convective_form}, respectively. Finally the pressure stability is guaranteed by observing the following inf-sup condition \cite{akbas2018analogue}
\begin{align*}
C\|q_h\|_{L^{2}(\Omega)}\leq \underset{\vbold_h \in \bm{V}_h^{\text{DG}} \setminus \{\bm{0}\}}{\sup} \frac{b_h \left(\vbold_h, q_h \right)}{\left\|\vbold_h \right\|_{\text{sip}}},\quad
\forall q_h \in Q_h^{\text{DG}},
\end{align*}{}
%%%%%%%%%%%%%%%%%%
where  $\|\bm{v}_h\|_{\text{sip}}:=\left(\|\bm{v}_h\|^{2}_{L^{2}(\Omega)}+\|h^{-\frac{1}{2}}\llbracket \vbold_h \rrbracket\|^{2}_{L^{2}(\mathcal{F}_h)}\right)^\frac{1}{2}.$
%%%%%%%%%%%%%%%%%%%%%%%%%%%%%%%%%%%%%%%%%%%%%%%%%%%%%%%%%%
\begin{remark}\label{data_enforce}
It is clear that $\bm{u}|_{\partial\Omega}=\bm{0}$ holds point-wise on the boundary for the $H^{1}$-conforming method. However, for $H(\emph{\text{div}})$-conforming and DG methods, $\bm{u}|_{\partial\Omega}=\bm{0}$ is weakly imposed in \eqref{compact}. In fact, any non-homogeneous Dirichlet boundary condition could be weakly enforced via modifying the right hand side of \eqref{CrankNicolson}. 
In particular, the scheme \eqref{CrankNicolson} under the boundary condition $\bm{u}|_{\partial\Omega}=\bm{g}$ is modified as 
%%%%%%%%%%%%%%%
\begin{align*}
&\left(\delta_t\bm{u}^n_h,\vbold_h\right) + N_h\left(\bm{u}_h^{n+\frac{1}{2}};\bm{v}_h \right) - b_h \left( \bm{v}_h, p_h^{n+\frac{1}{2}}\right)\\
&\quad=(\bm{f}^{n+\frac{1}{2}},\vbold_h)+\nu f_{h,a}^{n+\frac{1}{2}}(\vbold_h)+ sf_{h,c}^{n+\frac{1}{2}}(\vbold_h)+sf_{h,d}^{n+\frac{1}{2}}(\vbold_h),\quad\forall\bm{v}_h\in\bm{V}_h,\\
& b_h \left(\bm{u}^{n+\frac{1}{2}}_h, q_h \right) = sf_{h,b}^{n+\frac{1}{2}} \left(q_h \right),\quad\forall q_h\in Q_h,
\end{align*}
where $s=0$ for the $H(\emph{div})$ scheme ($\bm{V}_h\times Q_h=\bm{V}_h^{\emph{DIV}}\times Q_h^{\emph{DG}}$) and $s=1$ for the DG scheme ($\bm{V}_h\times Q_h=\bm{V}_h^{\emph{DG}}\times Q_h^{\emph{DG}}$). The newly introduced $f_{h,a}^{n+\frac{1}{2}}, f_{h,b}^{n+\frac{1}{2}}, f_{h,c}^{n+\frac{1}{2}}, f_{h,d}^{n+\frac{1}{2}}$ are defined as 
\begin{align*}
&f_{h,a}^{n+\frac{1}{2}}(\vbold_h):=\sum_{F\in\mathcal{F}^{\partial}_{h}}\frac{\eta}{h_{F}}\ipbtkf{\gbold^{n+\frac{1}{2}}}{\vbold_h}-\sum_{F\in \mathcal{F}^{\partial}_{h}}\ipbtkf{\gbold^{n+\frac{1}{2}}}{\bm{\tau}_h(\bm{v}_h)\nbold},\\
&f_{h,b}^{n+\frac{1}{2}}  \left(q_h \right):=\sum_{F\in \mathcal{F}^{\partial}_{h}}\ipbtkf{\gbold^{n+\frac{1}{2}}}{q_h\nbold},\\
&f_{h,c}^{n+\frac{1}{2}}(\vbold_h):=\sum_{F\in\mathcal{F}^{\partial}_{h}}\ipbtkf{\zeta\big|\gbold^{n+\frac{1}{2}}\cdot\nbold\big|\gbold^{n+\frac{1}{2}}}{\vbold_h},\\
&f_{h,d}^{n+\frac{1}{2}}(\vbold_h):=\sum_{F\in \mathcal{F}^{\partial}_{h}}\frac{\gamma}{h_{F}}\ipbtkf{\gbold^{n+\frac{1}{2}}\cdot\nbold}{\vbold_h\cdot\nbold}.
\end{align*}
%%%%%%%%%%%%%%%%

\end{remark}
%%%%%%%%%%%%%
%%%%%%%%%%%%%%%
\begin{remark}
Similarly to the $H(\emph{div})$-conforming method, the DG method is able to handle convection dominated flows when upwind flux is introduced. In addition, the DG scheme allows non-conforming and polygonal meshes. However, the DG scheme may lack pressure robustness, which could be cured by increasing the parameter $\gamma$.
\end{remark}
%%%%%%%%%%%%%%%%%%%%%%%%%%%%%%%%%%%%%%%%%%%%%%%%

It is worth mentioning that energy-stable and convergent $H(\text{div})$ and DG schemes in \cite{guzman2016h} are designed for the Euler equation modelling incompressible and inviscid flows. Our $H(\text{div})$-conforming scheme shares the same convective form $c_h$ with the $H(\text{div})$ scheme in \cite{guzman2016h}. However, in contrast to our DG scheme, the convective form $c_h$ of the DG scheme in \cite{guzman2016h} relies on a postprocessed velocity. We also point out that,  only semi-discrete stability is shown in \cite{guzman2016h} and the BDF1 time integrator used in the fully discrete scheme there might not yield decaying numerical energy.
%%%%%%%%%%%%%%%%%%%%%%%%%%%%%%%%%%%%%%%%%%%%%%%%%%%%%%%%%%%%%%%%%%%%%%%%%%%%%%%%%%%%%%%%%%%%%%
\section{Numerical Experiments}\label{secNE}
In this section, we test the performance of several methods in the form \eqref{CrankNicolson} with $$\bm{V}_h\times Q_h=\bm{V}^\text{C}_h\times Q^\text{C}_h,\text{ or } \bm{V}^{\text{DIV}}_h\times Q^{\text{DG}}_h,\text{ or }\bm{V}_h^{\text{DG}}\times Q^{\text{DG}}_h.$$ 
The corresponding scheme is denoted as Scheme $H^1$, $H(\text{div})$, or DG-N, respectively. The viscous strain tensor in \eqref{CrankNicolson} is chosen as $\bm{\tau}_h(\bm{u}_h)=\nabla_h\bm{u}_h+\nabla_h\bm{u}_h^{T}-\frac{2}{3}\left(\nabla_h\cdot \bm{u}_h\right) \mathbb{I}$.  Schemes $H^{1}$ and $H(\text{div})$ are considered in the first and second experiments, while the DG-N scheme from our framework are tested in all experiments. Recall that the  incompressibility condition $\nabla\cdot\bm{u}=0$ is weakly enforced via the condition $b_h(\bm{u}_h,q_h)=0\ \forall q_h\in Q_h$, where the bilinear form $b_h$ is introduced in Section \ref{secDG}. When implementing our schemes, that condition yields the linear system of equations $B_hU_h=\bm{0}$, where $U_h$ is the vector representation of $\bm{u}_h$ and $B_h$ is a matrix representing $b_h.$ For nonhomogeneous boundary condition, the right hand side of $b_h(\bm{u}_h,q_h)=0$ (and $B_hU_h=\bm{0}$) is modified as discussed in Remark \ref{data_enforce}. Although our framework is designed for unsteady problems, we compare our DG-N spatial discretization  with the scheme proposed in \cite{akbas2018analogue,Ern2012}, which we will denote as DG-C and is of the form \eqref{compact} with the following bilinear and convective forms
%%%%%%%%%%%%%%%%%%%%%
\begin{align*}
&a_h(\ubold_h,\vbold_h)=(\nabla_h\bm{u}_h,\nabla_h \vbold_h)   -\ipbf{\llbracket \vbold_h \rrbracket}{\llcurve \nabla_h\bm{u}_h \rrcurve \bm{n}}-\ipbf{\llbracket \ubold_h \rrbracket}{\llcurve  \nabla_h\bm{v}_h \rrcurve \bm{n}} 
+\ipbf{\eta h^{-1} \llbracket \ubold_h \rrbracket}{\llbracket \vbold_h \rrbracket},\\[1.5ex]
&b_h(\bm{v}_h, q_h)= (\nabla_h \cdot \vbold_h,q_h)-\iipbfb{\llbracket \vbold_h \rrbracket\cdot\nbold}{\llcurve q_h \rrcurve},\\[1.5ex]
&c_h \left(\ubold_h; \ubold_h, \vbold_h \right) = (\ubold_h \cdot \nabla_h \ubold_h,\vbold_h)- \iipbf{ \left( \llcurve\ubold_h\rrcurve \cdot \nbold \right) \llbracket \ubold_h \rrbracket}{\llcurve \vbold_h \rrcurve}\nonumber+ \iipbf{\frac{1}{2} \left| \llcurve\ubold_h\rrcurve \cdot \bm{n} \right|\llbracket \ubold_h \rrbracket}{\llbracket \vbold_h \rrbracket},\\[1.5ex]
&d_h(\bm{u}_h,\bm{v}_h)=\gamma(\nabla\cdot\ubold_h,\nabla\cdot\vbold_h)+\gamma\sum_{F\in\mathcal{F}_h}h_F^{-1}\langle\llbracket \vbold_h \rrbracket\cdot\nbold_F,\llbracket \ubold_h \rrbracket\cdot\nbold_F\rangle_F.
\end{align*}
%%%%%%%%%%%%%%%%%%%%
In contrast to DG-N, the scheme DG-C in \cite{akbas2018analogue,Ern2012} is designed only for steady incompressible flow and not proved to be energy stable for unsteady flow.

In  $a_h$ and $c_h$,  the penalization parameters $\eta$ and $\zeta$ are empirically set to be $\eta=3(k+1)(k+2)$ (cf.~\cite{schroeder2018divergence}) and $0.5$ respectively, where $k$ is the degree of polynomials in \eqref{VQDG}. The penalty parameters $\gamma$ (for DG-N and DG-C) and $\gamma_{gd}$ (for $H^{1}$) will be specified in each numerical example. 

The numerical simulations are performed in FEniCS \cite{LangtangenLogg2017}  on a laptop with Intel Core i5 CPU (2.7 GHz) and 8 GB RAM. We use the Newton nonlinear solver with the \textsf{MUMPS} linear solver inside FEniCS to solve the nonlinear systems of equations arising from fully discrete schemes. We set absolute and relative error tolerances used in the Newton solver to be $10^{-8}$ for dynamic problems and $10^{-10}$ for stationary problems.  

% which for example for unsteady cases with $\ubold=\gbold_{D}$ on $\partial\Omega$ has the following form
% %%%%%%%%%%%%%%%
% \begin{align*}
% &\left(\delta_t\bm{u}^n_h,\vbold_h\right) + N_h\left(\bm{u}_h^{n+\frac{1}{2}};\bm{v}_h \right) - b_h \left( \bm{v}_h, p_h^{n+\frac{1}{2}}\right)=(\bm{f}^{n+\frac{1}{2}},\vbold_h)+\nu a_h^{\partial}(\gbold_{D}^{n+\frac{1}{2}},\vbold_h)+\gamma j_{h}^{\partial}(\gbold_{D}^{n+\frac{1}{2}},\vbold_h),\quad\forall\bm{v}_h\in\bm{V}_h,\\
% & b_h \left(\bm{u}^{n+\frac{1}{2}}_h, q_h \right) = b_h^{\partial} \left(\gbold_D^{n+\frac{1}{2}}, q_h \right),\quad\forall q_h\in Q_h,\\[1.5ex]
% &a_h^{\partial}(\gbold_{D}^{n+\frac{1}{2}},\vbold_h):=\sum_{F\in F^{\partial}_{h}}\frac{\eta}{h_{F}}\ipbtkf{\gbold_{D}^{n+\frac{1}{2}}}{\vbold_h}-\sum_{F\in F^{\partial}_{h}}\ipbtkf{\gbold_{D}^{n+\frac{1}{2}}}{\bm{\tau}_h(\bm{v}_h)\nbold},\\
% &b_h^{\partial} \left(\gbold_D^{n+\frac{1}{2}}, q_h \right):=\sum_{F\in F^{\partial}_{h}}\ipbtkf{\gbold_{D}^{n+\frac{1}{2}}}{q_h\nbold},\\
% &j_{h}^{\partial}(\gbold_{D}^{n+\frac{1}{2}},\vbold_h):=\sum_{F\in F^{\partial}_{h}}\frac{\gamma}{h_{F}}\ipbtkf{\gbold_{D}^{n+\frac{1}{2}}\cdot\nbold}{\vbold_h\cdot\nbold}.
% \end{align*}
%%%%%%%%%%%%%%%%%
%%%%%%%%%%%%%%%%
\subsection{Taylor--Green Vortex}\label{Tay_Ex}
The analytical solutions of Taylor--Green vortex \cite{guzman2016h} in $\mathbb{R}^2$ are given by 
\begin{align*}
    &\ubold(t,\xbold)=\bigg(\sin(x_{1})\cos(x_{2})e^{-2\nu t},-\cos(x_{1})\sin(x_{2})e^{-2\nu t}\bigg), \\ &p(t,\xbold)=\frac{1}{4}\bigg(\cos(2x_{1})+\cos(2x_{2})\bigg)e^{-4\nu t}
\end{align*}
with $\nu=0.01$.
%%%%%%%%%%%%%%%%
The space domain and time interval are set to be $\Omega:=[0,2\pi]^2$  and $[0,T]$ with $T=1s$, respectively. All  schemes are based on the Crank--Nicolson time discretization  with uniform time step $\tau=0.01s$. The space domain is partitioned by uniform meshes with mesh sizes $h_{\text{max}}\in\{0.8886, 0.4443, 0.2221, 0.1777\}$, see Figure~\ref{fig:mesh_example} for sample meshes. For the $H^{1}$ scheme, we choose $k\in\{1, 2\}$ in \eqref{VQH1} and $\gamma_{gd}=0$ in \eqref{dh}. Note that the Taylor--Hood  space \eqref{VQH1} with $k=0$ is not inf-sup stable. For $H(\text{div})$ and DG schemes,  we set $k\in\{0, 1, 2\}$ in \eqref{VQDG} and $\bm{\mathcal{Q}}_k(K)=[\mathcal{P}_{k+1}(K)]^d$ (Brezzi-Douglas-Marini element) in \eqref{VQHdiv}. In addition, the DG scheme uses the penalty parameter $\gamma\in\{0,10\}$.  Numerical results are presented in Figure \ref{DG_vorticity_taylor_green} and Tables \ref{Taylor_H1div} to \ref{table_unsteady_0}.
%%%%%%%%%%%%%%%%%%%%%%%%%%%%%%%%%%%%%%%%%%%%%%%
\begin{figure}[ht]
\centering
\begin{subfigure}{0.4\textwidth}
  \centering
  \includegraphics[width=1.0\linewidth,trim={6.5cm 2cm 0cm 2cm},clip]{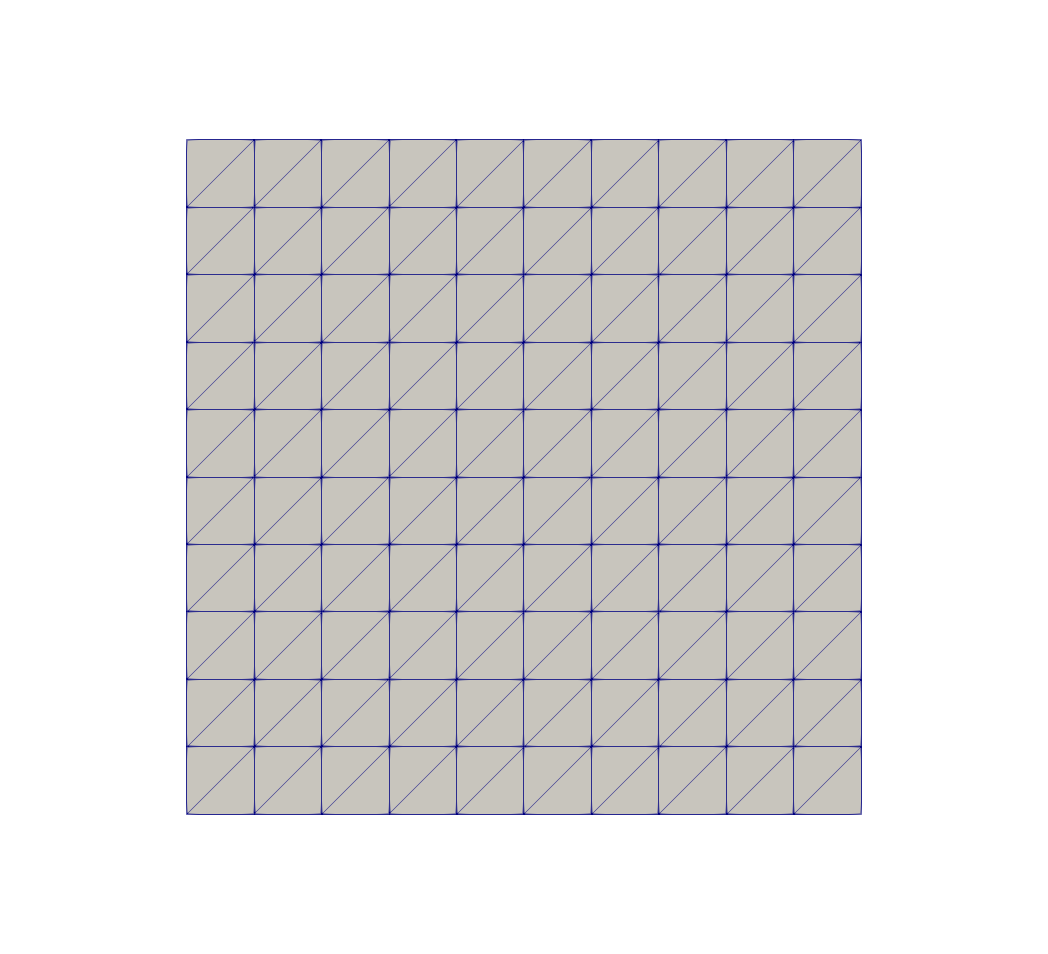}
\end{subfigure}%
\begin{subfigure}{.4\textwidth}
  \centering
  \includegraphics[width=1.0\linewidth,trim={6.5cm 2cm 0cm 2cm},clip]{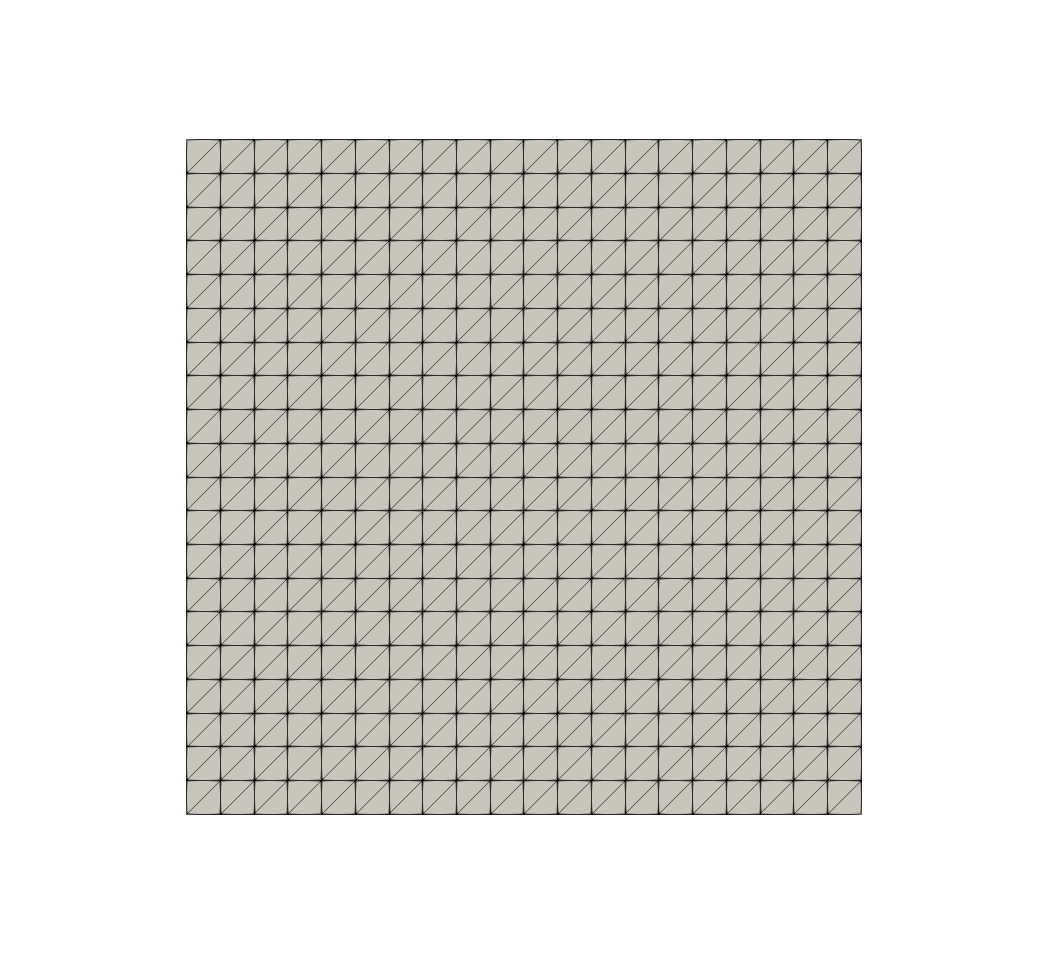}
\end{subfigure}
\caption{Uniform Meshes}
\label{fig:mesh_example}
\end{figure}
%%%%%%%%%%%%%%%%%%%%%%%%%%%%%%%%%%%%%%%%%%%%%%%%
% %%%%%%%%%%%%%%%%%%%%%%%%%%%%%%%%%%%%%%%%%%%%%%%%
\begin{figure*}
\centering
\begin{multicols}{2}
    \includegraphics[width=1.0\linewidth,trim={7cm 2cm 0cm 2cm},clip]{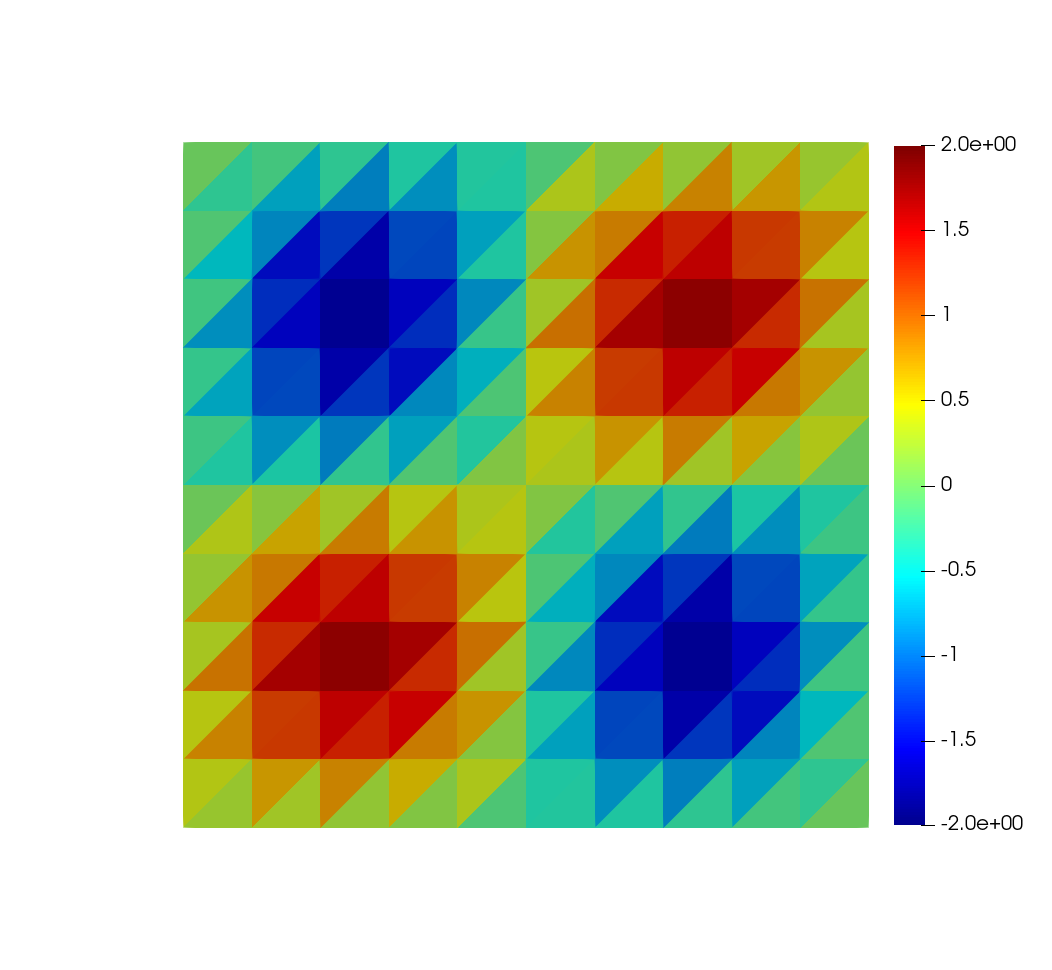}\par
    \includegraphics[width=1.0\linewidth,trim={7cm 2cm 0cm 2cm},clip]{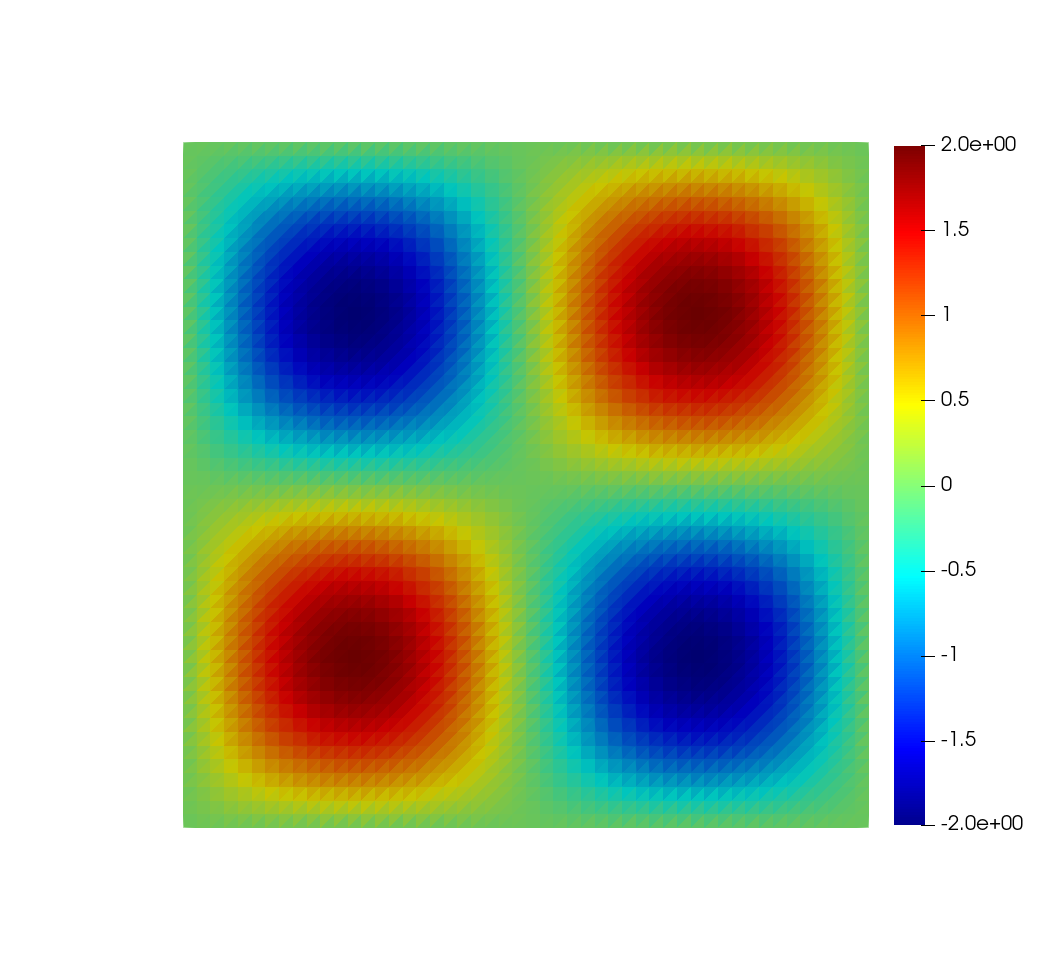}\par 
    \end{multicols}
\begin{multicols}{2}
    \includegraphics[width=1.0\linewidth,trim={7cm 2cm 0cm 2cm},clip]{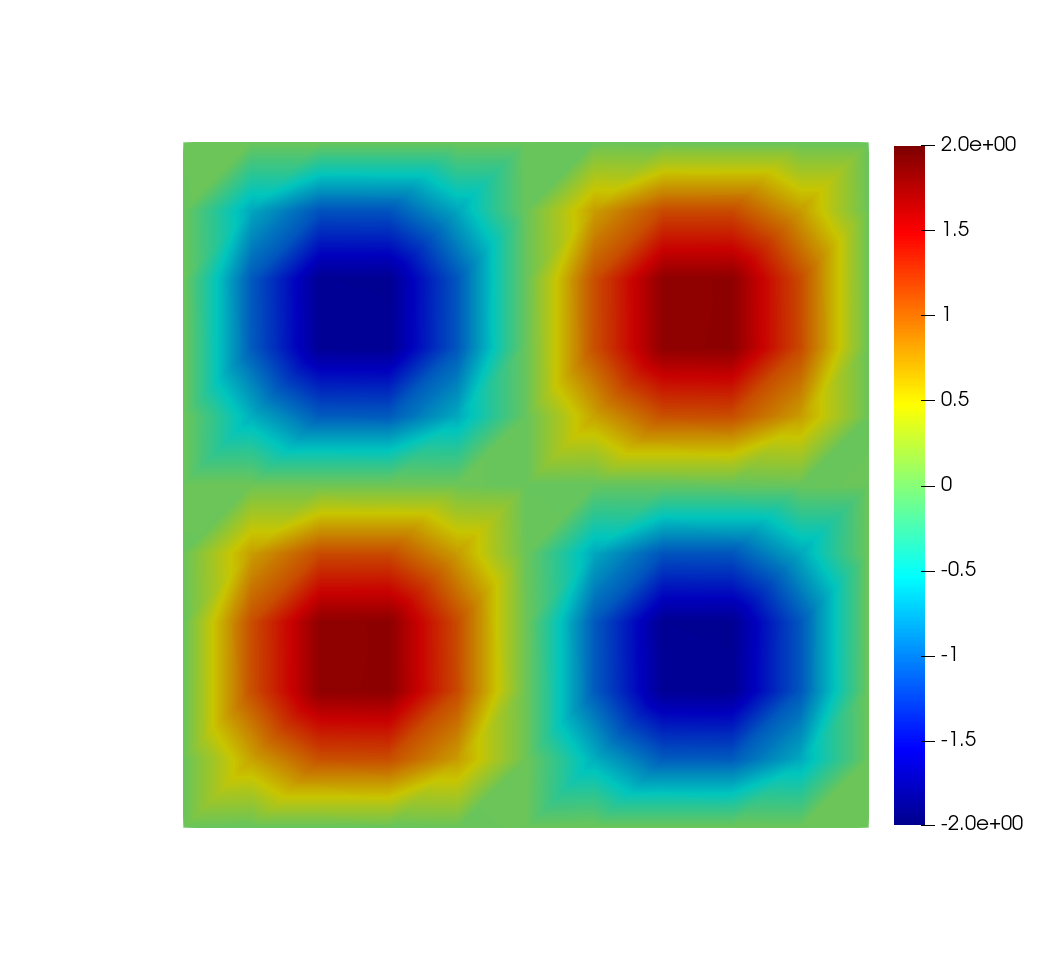}\par
    \includegraphics[width=1.0\linewidth,trim={7cm 2cm 0cm 2cm},clip]{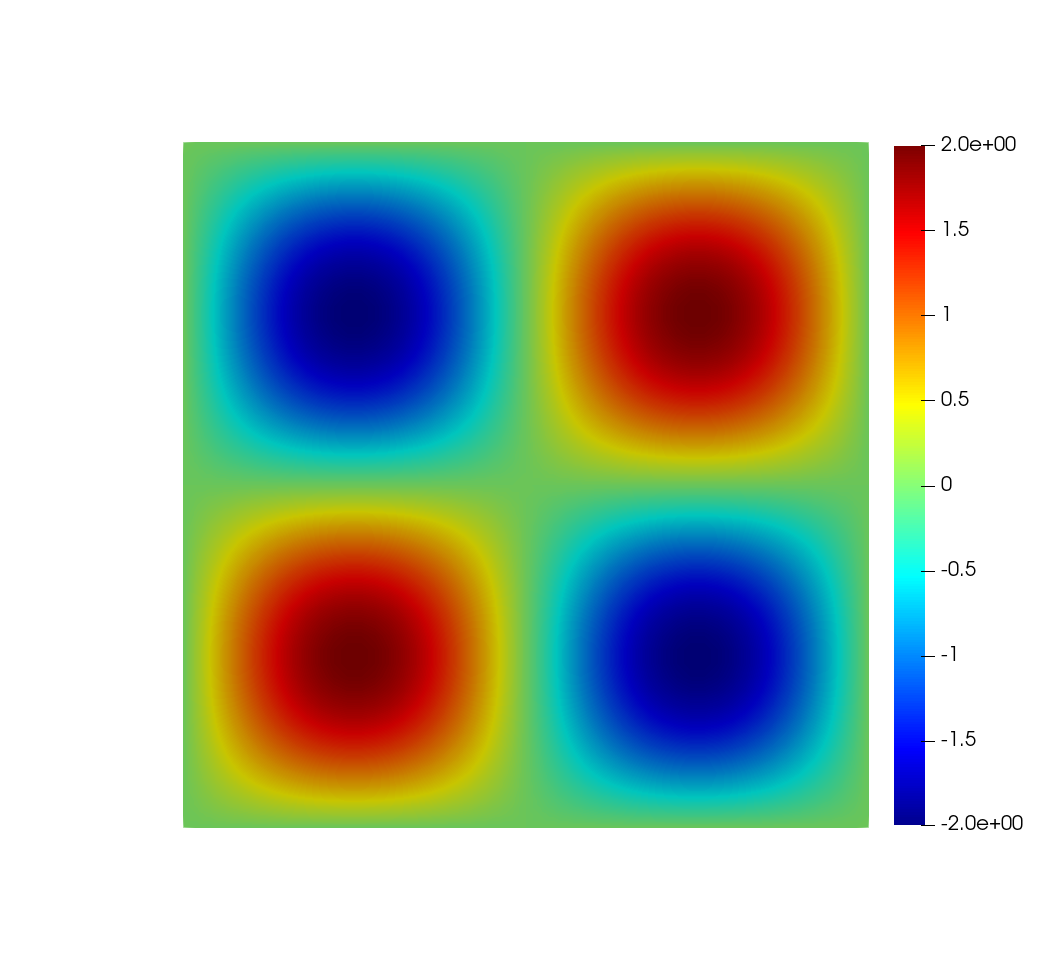}\par
\end{multicols}
\caption{Taylor--Green Vortex: Contours of vorticity $\nabla_h\times\bm{u}_h$ from DG-N with $h_{\text{max}} = 0.8886$ (left) and $h_{\text{max}} = 0.1777$ (right) at $t = 1.0s$ when $k=0$ (upper row) and $k=2$ (bottom row), $\nu=0.01$, $\gamma=10$.}
\label{DG_vorticity_taylor_green}
\end{figure*}

% %%%%%%%%%%%%%%%%%%%%%%%%%%%%%%%%%%%%%%%%%%%%%%%%%%%
%%%%%%%%%%%%%%%%%%%%%%%%%%%%%%%%%%
From Tables \ref{table_unsteady_10} and \ref{table_unsteady_0}, we observe that both DG schemes achieve the expected convergence rates (when $\gamma=10$), and achieve roughly the same level of accuracy for both velocity and pressure with the same order of runtime. In addition, we observe a decreasing of errors in both velocity and pressure when we increase $\gamma$ from $0$ to $10$.  In order to ensure stability, and to test the behaviors of the full viscous strain tensor, our DG scheme has more terms (in both $a_h$ and $c_h$) to be updated at each time step compared with DG-C. Therefore, the running time of DG-N scheme is slightly longer. We do not observe a clear trend of runtime when we increase $\gamma$ from $0$ to $10$, for both DG schemes. Figure~\ref{DG_vorticity_taylor_green} shows that the approximation becomes better when increasing polynomial degree and/or decreasing mesh sizes for our DG-N scheme.

Numerical results on $H^{1}$ and $H(\text{div})$  schemes are presented in Table \ref{Taylor_H1div}.
Due to  smaller numbers of degrees of freedom, the runtime of the  $H^1$ scheme is less than $H(\text{div})$ and DG schemes. An interesting phenomenon is the apparent superconvergence of  the $H^{1}$ scheme when $k=1$.  It can be observed from Tables \ref{Taylor_H1div} and  \ref{table_unsteady_10} that errors of  $H(\text{div})$, DG-N, and DG-C schemes are of the same magnitude, while the $H^1$ scheme is much less accurate.  It is noted that the $H(\text{div})$ scheme has a longer running time than DG schemes although it has less number of degrees of freedom and a simpler expression. We will not pursue a rigorous explanation on this and conjecture that the `unreasonble' runtime of $H(\text{div})$ schemes might be due to the inefficiency of assembling process for Brezzi--Douglas--Marini elements in FEniCS.
%%%%%%%%%%%%%%%%%%%%%%%%%%%%%%%%%%%%%%
%%%%%%%%%%%%%%%%%%%%%%%%%%%%%%%%%%%%
%%%%%%%%%%%%%%%%%%%%%%%%%%%%%%%%%%%%%%
%%%%%%%%%%%%%%%%%%%%%%%%%%%%%%%%%%%%%%%%%%%%%%%%

% Please add the following required packages to your document preamble:
% \usepackage{multirow}
\begin{table}
\begin{center}
\resizebox{\columnwidth}{!}{
\begin{tabular}{|l|l|c|l|l|l|l|c|l|c|l|l|l|l|l|c|}
\hline
\multicolumn{1}{|c|}{\multirow{2}{*}{$k$}} & \multicolumn{1}{c|}{\multirow{2}{*}{$h_{\text{max}}$}} & \multicolumn{7}{c|}{$H^1$}                                                                                                                                                                                                                        & \multicolumn{7}{c|}{$H(\text{div})$}                                                                                                                                                                          \\ \cline{3-16} 
\multicolumn{1}{|c|}{}                   & \multicolumn{1}{c|}{}                   & \multicolumn{1}{l|}{d.o.f} & \multicolumn{2}{c|}{\begin{tabular}[c]{@{}c@{}}$\|\ubold-\ubold_h\|_{L^{2}(\Omega)}$\\ error   order\end{tabular}} & \multicolumn{2}{c|}{\begin{tabular}[c]{@{}c@{}}$\|p-p_h\|_{L^{2}(\Omega)}$\\ error   order\end{tabular}} & \multicolumn{2}{c|}{Runtime}  & d.o.f  & \multicolumn{2}{c|}{\begin{tabular}[c]{@{}c@{}}$\|\ubold-\ubold_h\|_{L^{2}(\Omega)}$\\ error   order\end{tabular}} & \multicolumn{3}{c|}{\begin{tabular}[c]{@{}c@{}}$\|p-p_h\|_{L^{2}(\Omega)}$\\ error   order\end{tabular}} & Runtime \\ \hline
\multirow{4}{*}{0}                       & 0.8886                                  & N/A                        & \multicolumn{2}{c|}{N/A}                                                                & \multicolumn{2}{c|}{N/A}                                                                & \multicolumn{2}{c|}{N/A}      & 841    & \multicolumn{2}{c|}{2.26e-1\:\quad   -----}                                                      & \multicolumn{3}{c|}{4.55e-1\:\quad   -----}                                                      & 1.97s   \\ \cline{2-16} 
& 0.4443                                  & N/A                        & \multicolumn{2}{c|}{N/A}                                                                & \multicolumn{2}{c|}{N/A}                                                                & \multicolumn{2}{c|}{N/A}      & 3281   & \multicolumn{2}{c|}{5.21e-2\quad  2.12}                                                      & \multicolumn{3}{c|}{2.25e-1\quad  1.01}                                                      & 5.71s   \\ \cline{2-16} 
& 0.2221                                  & N/A                        & \multicolumn{2}{c|}{N/A}                                                                & \multicolumn{2}{c|}{N/A}                                                                & \multicolumn{2}{c|}{N/A}      & 12961  & \multicolumn{2}{c|}{1.20e-2\quad  2.11}                                                      & \multicolumn{3}{c|}{1.12e-1\quad  1.01}                                                      & 20.78s  \\ \cline{2-16} 
& 0.1777                                  & N/A                        & \multicolumn{2}{c|}{N/A}                                                                & \multicolumn{2}{c|}{N/A}                                                                & \multicolumn{2}{c|}{N/A}      & 20201  & \multicolumn{2}{c|}{7.57e-3\quad  2.09}                                                      & \multicolumn{3}{c|}{8.97e-2\quad  1.00}                                                      & 35.62s  \\ \hline
\multirow{4}{*}{1}                       & 0.8886                                  & 1004                       & \multicolumn{2}{c|}{2.86e-1\:\quad -----}                                                      & \multicolumn{2}{c|}{1.54e-1\:\quad   -----}                                                      & \multicolumn{2}{c|}{1.83es} & 2161   & \multicolumn{2}{c|}{2.01e-2\:\quad   -----}                                                      & \multicolumn{3}{c|}{6.80e-2\:\quad   -----}                                                      & 5.17s   \\ \cline{2-16} 
& 0.4443                                  & 3804                       & \multicolumn{2}{c|}{2.55e-2\quad  3.49}                                                      & \multicolumn{2}{c|}{2.37e-2\quad  2.70}                                                      & \multicolumn{2}{c|}{4.66s}    & 8521   & \multicolumn{2}{c|}{2.44e-3\quad 3.04}                                                      & \multicolumn{3}{c|}{1.72e-2\quad  1.99}                                                      & 20.31s  \\ \cline{2-16} 
& 0.2221                                  & 14804                      & \multicolumn{2}{c|}{1.52e-3\quad  4.07}                                                      & \multicolumn{2}{c|}{5.62e-3\quad  2.08}                                                      & \multicolumn{2}{c|}{16.18s}    & 33841  & \multicolumn{2}{c|}{2.93e-4\quad  3.06}                                                      & \multicolumn{3}{l|}{4.31e-3\quad  2.00}                                                      & 104.10s  \\ \cline{2-16} 
& 0.1777                                  & 23004                      & \multicolumn{2}{c|}{6.33e-4\quad  3.92}                                                      & \multicolumn{2}{c|}{3.58e-3\quad  2.02}                                                      & \multicolumn{2}{c|}{27.13s}   & 52801  & \multicolumn{2}{c|}{1.49e-4\quad  3.04}                                                      & \multicolumn{3}{c|}{2.76e-3\quad  2.00}                                                      & 170.04s  \\ \hline
\multirow{4}{*}{2}                       & 0.8886                                  & 2364                       & \multicolumn{2}{c|}{5.03e-2\:\quad   -----}                                                      & \multicolumn{2}{c|}{2.56e-2\:\quad   -----}                                                      & \multicolumn{2}{c|}{4.26s}    & 4081   & \multicolumn{2}{c|}{1.29e-3\:\quad   -----}                                                      & \multicolumn{3}{c|}{7.05e-3\:\quad   -----}                                                      & 14.42s   \\ \cline{2-16} 
& 0.4443                                  & 9124                       & \multicolumn{2}{c|}{5.67e-3\quad  3.15}                                                      & \multicolumn{2}{c|}{3.27e-3\quad  2.97}                                                      & \multicolumn{2}{c|}{13.23s}    & 16161  & \multicolumn{2}{c|}{7.44e-5\quad  4.12}                                                      & \multicolumn{3}{c|}{8.90e-4\quad  2.99}                                                      & 70.15s  \\ \cline{2-16} 
& 0.2221                                  & 35844                      & \multicolumn{2}{c|}{3.75e-4\quad  3.92}                                                      & \multicolumn{2}{c|}{3.48e-4\quad  3.23}                                                      & \multicolumn{2}{c|}{55.46s}   & 64321  & \multicolumn{2}{c|}{4.57e-6\quad  4.02}                                                      & \multicolumn{3}{c|}{1.11e-4\quad  3.00}                                                      & 360.00s \\ \cline{2-16} 
& 0.1777                                  & 55804                      & \multicolumn{2}{c|}{1.52-4\quad  4.04}                                                      & \multicolumn{2}{c|}{1.68e-4\quad  3.28}                                                      & \multicolumn{2}{c|}{88.89s}   & 100401 & \multicolumn{2}{c|}{1.88e-6\quad  3.99}                                                      & \multicolumn{3}{c|}{5.71e-5\quad  3.00}                                                      & 622.18s \\ \hline
\end{tabular}}
\caption{{Taylor--Green vortex: Velocity (at $t = 1.0s$) and  pressure(at $t=0.995s$) of  $H^1$ and upwind $H(\text{div})$ schemes, $\nu=0.01$}}
\label{Taylor_H1div}
\end{center}
\end{table}
%%%%%%%%%%%%%%%%%%%%%%%%%%
%%%%%%%%%%%%%%%%%%%%%%%%%
\begin{table}
\begin{center}
\resizebox{\columnwidth}{!}{
\begin{tabular}{|c|c|c|c|c|c|c|c|c|c|c|c|c|c|c|}
\hline
\multicolumn{1}{|c|}{\multirow{2}{*}{$k$}} & \multicolumn{1}{c|}{\multirow{2}{*}{$h_{\text{max}}$}} & \multicolumn{1}{c|}{\multirow{2}{*}{d.o.f}} & \multicolumn{6}{c|}{DG-N}                                                                                                                                                                                          & \multicolumn{6}{c|}{DG-C}                                                                                                                                                                                      \\ \cline{4-15} 
\multicolumn{1}{|c|}{}                   & \multicolumn{1}{c|}{}                   & \multicolumn{1}{c|}{}                       & \multicolumn{2}{c|}{\begin{tabular}[c]{@{}c@{}}$\|\ubold-\ubold_h\|_{L^{2}(\Omega)}$\\ error\quad   order\end{tabular}} & \multicolumn{2}{c|}{\begin{tabular}[c]{@{}c@{}}$\|p-p_h\|_{L^{2}(\Omega)}$\\ error\quad   order\end{tabular}} & \multicolumn{2}{c|}{{{Runtime}}} & \multicolumn{2}{c|}{\begin{tabular}[c]{@{}c@{}}$\|\ubold-\ubold_h\|_{L^{2}(\Omega)}$\\ error  \quad order\end{tabular}} & \multicolumn{2}{c|}{\begin{tabular}[c]{@{}c@{}}$\|p-p_h\|_{L^{2}(\Omega)}$\\ error  \quad order\end{tabular}} & \multicolumn{2}{l|}{{Runtime}} \\ \hline
\multirow{4}{*}{0}                       & 0.8886                                  &        1401                                     & \multicolumn{2}{l|}{2.35e-1 \quad  ---}                                                                   & \multicolumn{2}{l|}{4.55e-1\quad ---}                                                                   & \multicolumn{2}{c|}{2.30s}        & \multicolumn{2}{l|}{2.27e-1\quad ---}                                                                   & \multicolumn{2}{l|}{4.51e-1\quad ---}                                                                   & \multicolumn{2}{c|}{1.69s}        \\ \cline{2-15} 
& 0.4443                                  &           5601                                  & \multicolumn{2}{l|}{5.44e-2\quad 2.11}                                                                   & \multicolumn{2}{l|}{2.26e-1\quad 1.01}                                                                   & \multicolumn{2}{c|}{5.73s}        & \multicolumn{2}{l|}{5.28e-2\quad 2.10}                                                                   & \multicolumn{2}{l|}{2.26e-1\quad 1.00}                                                                   & \multicolumn{2}{c|}{5.64s}        \\ \cline{2-15} 
& 0.2221                                  &    22401                                         & \multicolumn{2}{l|}{1.26e-2\quad 2.11}                                                                   & \multicolumn{2}{l|}{1.12e-1\quad 1.01}                                                                   & \multicolumn{2}{c|}{29.65s}        & \multicolumn{2}{l|}{1.24e-2\quad 2.10}                                                                   & \multicolumn{2}{l|}{1.13e-1\quad 1.00}                                                                   & \multicolumn{2}{c|}{22.50s}        \\ \cline{2-15} 
& 0.1777                                  &     35001                                        & \multicolumn{2}{l|}{7.92e-3\quad 2.08}                                                                   & \multicolumn{2}{l|}{8.97e-2\quad 1.00}                                                                   & \multicolumn{2}{c|}{54.50s}        & \multicolumn{2}{l|}{7.78e-3\quad 2.07}                                                                   & \multicolumn{2}{l|}{8.99e-2\quad 1.00}                                                                   & \multicolumn{2}{c|}{38.00s}        \\ \hline
\multirow{4}{*}{1}                       & 0.8886                                  &       3001                                      & \multicolumn{2}{l|}{2.07e-2\quad ---}                                                                   & \multicolumn{2}{l|}{6.80e-2\quad ---}                                                                   & \multicolumn{2}{c|}{4.05s}        & \multicolumn{2}{l|}{2.00e-2\quad ---}                                                                   & \multicolumn{2}{l|}{8.68e-2\quad ---}                                                                   & \multicolumn{2}{c|}{3.78s}        \\ \cline{2-15} 
& 0.4443                                  &       12001                                      & \multicolumn{2}{l|}{2.54e-3\quad 3.03}                                                                   & \multicolumn{2}{l|}{1.72e-2\quad 1.99}                                                                   & \multicolumn{2}{c|}{17.32s}        & \multicolumn{2}{l|}{2.42e-3\quad 3.04}                                                                   & \multicolumn{2}{l|}{2.23e-2\quad 1.96}                                                                   & \multicolumn{2}{c|}{15.62s}        \\ \cline{2-15} 
& 0.2221                                  &      48001                                       & \multicolumn{2}{l|}{3.03e-4\quad 3.07}                                                                   & \multicolumn{2}{l|}{4.31e-3\quad2.00}                                                                   & \multicolumn{2}{c|}{98.24s}        & \multicolumn{2}{l|}{2.83e-4\quad 3.10}                                                                   & \multicolumn{2}{l|}{5.60e-3\quad 1.99}                                                                   & \multicolumn{2}{c|}{87.66s}        \\ \cline{2-15} 
& 0.1777                                  &           75001                                  & \multicolumn{2}{l|}{1.53e-4\quad 3.06}                                                                   & \multicolumn{2}{l|}{2.76e-3\quad 2.00}                                                                   & \multicolumn{2}{c|}{175.95s}        & \multicolumn{2}{l|}{1.42e-4\quad 3.08}                                                                   & \multicolumn{2}{l|}{3.58e-3\quad 2.00}                                                                   & \multicolumn{2}{c|}{158.56s}        \\ \hline
\multirow{4}{*}{2}                       & 0.8886                                  &    5201                                         & \multicolumn{2}{l|}{1.44e-3\quad ---}                                                                   & \multicolumn{2}{l|}{7.04e-3 \quad ---}                                                                   & \multicolumn{2}{c|}{9.93s}        & \multicolumn{2}{l|}{1.37e-3\quad ---}                                                                   & \multicolumn{2}{l|}{8.00e-3\quad ---}                                                                   & \multicolumn{2}{c|}{9.38s}        \\ \cline{2-15} 
& 0.4443                                  &       20801                                      & \multicolumn{2}{l|}{8.14e-5\quad 4.15}                                                                   & \multicolumn{2}{l|}{8.90e-4\quad 2.99}                                                                   & \multicolumn{2}{c|}{55.93s}        & \multicolumn{2}{l|}{7.80e-5\quad 4.14}                                                                   & \multicolumn{2}{l|}{9.72e-4\quad3.04}                                                                   & \multicolumn{2}{c|}{52.93s}        \\ \cline{2-15} 
& 0.2221                                  &             83201                                & \multicolumn{2}{l|}{4.90e-6\quad 4.05}                                                                   & \multicolumn{2}{l|}{1.11e-4\quad 3.00}                                                                     & \multicolumn{2}{c|}{316.97s}        & \multicolumn{2}{l|}{4.65e-6\quad 4.07}                                                                   & \multicolumn{2}{l|}{1.20e-4\quad 3.01}                                                                   & \multicolumn{2}{c|}{308.49s}        \\ \cline{2-15} 
& 0.1777                                  &          130001                                   & \multicolumn{2}{l|}{2.00e-6\quad 4.02}                                                                   & \multicolumn{2}{l|}{5.71e-5\quad 3.00}                                                                   & \multicolumn{2}{c|}{569.20s}        & \multicolumn{2}{l|}{1.90e-6\quad 4.01}                                                                   & \multicolumn{2}{l|}{6.15e-5\quad 3.01}                                                                   & \multicolumn{2}{c|}{562.17s}        \\ \hline
\end{tabular}}
\caption{Taylor--Green vortex: Comparison of DG-N and DG-C schemes for the velocity at $t=1.0s$ and pressure at $t = 0.995s$ when $\nu=0.01$ and $\gamma=10$. }
\label{table_unsteady_10}
\end{center}
\end{table}

% Please add the following required packages to your document preamble:
% \usepackage{multirow}
\begin{table}
\begin{center}
\resizebox{\columnwidth}{!}{
\begin{tabular}{|c|c|c|c|c|c|c|c|c|c|c|c|c|c|c|}
\hline
\multicolumn{1}{|c|}{\multirow{2}{*}{$k$}} & \multicolumn{1}{c|}{\multirow{2}{*}{$h_{\text{max}}$}} & \multicolumn{1}{c|}{\multirow{2}{*}{d.o.f}} & \multicolumn{6}{c|}{DG-N}                                                                                                                                                                                          & \multicolumn{6}{c|}{DG-C}                                                                                                                                                                                      \\ \cline{4-15} 
\multicolumn{1}{|c|}{}                   & \multicolumn{1}{c|}{}                   & \multicolumn{1}{c|}{}                       & \multicolumn{2}{c|}{\begin{tabular}[c]{@{}c@{}}$\|\ubold-\ubold_h\|_{L^{2}(\Omega)}$\\ error\quad   order\end{tabular}} & \multicolumn{2}{c|}{\begin{tabular}[c]{@{}c@{}}$\|p-p_h\|_{L^{2}(\Omega)}$\\ error\quad   order\end{tabular}} & \multicolumn{2}{c|}{{{Runtime}}} & \multicolumn{2}{c|}{\begin{tabular}[c]{@{}c@{}}$\|\ubold-\ubold_h\|_{L^{2}(\Omega)}$\\ error  \quad order\end{tabular}} & \multicolumn{2}{c|}{\begin{tabular}[c]{@{}c@{}}$\|p-p_h\|_{L^{2}(\Omega)}$\\ error  \quad order\end{tabular}} & \multicolumn{2}{l|}{{{Runtime}}} \\ \hline
\multirow{4}{*}{0}                       & 0.8886                                  &        1401                                     & \multicolumn{2}{l|}{8.11e-1 \quad  ---}                                                                   & \multicolumn{2}{l|}{5.34e-1\quad ---}                                                                   & \multicolumn{2}{c|}{2.99s}        & \multicolumn{2}{l|}{8.28e-1\quad ---}                                                                   & \multicolumn{2}{l|}{5.17e-1\quad ---}                                                                   & \multicolumn{2}{c|}{2.57s}        \\ \cline{2-15} 
& 0.4443                                  &           5601                                  & \multicolumn{2}{l|}{3.04e-2\quad 1.42}                                                                   & \multicolumn{2}{l|}{2.55e-1\quad 1.07}                                                                   & \multicolumn{2}{c|}{9.36s}        & \multicolumn{2}{l|}{3.05e-1\quad 1.44}                                                                   & \multicolumn{2}{l|}{2.52e-1\quad 1.04}                                                                   & \multicolumn{2}{c|}{5.74s}        \\ \cline{2-15} 
& 0.2221                                  &    22401                                         & \multicolumn{2}{l|}{9.92e-2\quad 1.62}                                                                   & \multicolumn{2}{l|}{1.21e-1\quad 1.07}                                                                   & \multicolumn{2}{c|}{28.35s}        & \multicolumn{2}{l|}{1.04e-1\quad 1.55}                                                                   & \multicolumn{2}{l|}{1.21e-1\quad 1.06}                                                                   & \multicolumn{2}{c|}{21.91s}        \\ \cline{2-15} 
& 0.1777                                  &     35001                                        & \multicolumn{2}{l|}{6.89e-2\quad 1.63}                                                                   & \multicolumn{2}{l|}{9.58e-2\quad 1.06}                                                                   & \multicolumn{2}{c|}{42.47s}        & \multicolumn{2}{l|}{7.26e-2\quad 1.61}                                                                   & \multicolumn{2}{l|}{9.58e-2\quad 1.06}                                                                   & \multicolumn{2}{c|}{36.60s}        \\ \hline
\multirow{4}{*}{1}                       & 0.8886                                  &       3001                                      & \multicolumn{2}{l|}{1.66e-1\quad ---}                                                                   & \multicolumn{2}{l|}{8.80e-2\quad ---}                                                                   & \multicolumn{2}{c|}{6.98s}        & \multicolumn{2}{l|}{1.50e-1\quad ---}                                                                   & \multicolumn{2}{l|}{7.90e-2\quad ---}                                                                   & \multicolumn{2}{c|}{4.14s}        \\ \cline{2-15} 
& 0.4443                                  &       12001                                      & \multicolumn{2}{l|}{2.60e-2\quad 2.68}                                                                   & \multicolumn{2}{l|}{1.98e-2\quad 2.15}                                                                   & \multicolumn{2}{c|}{20.10s}        & \multicolumn{2}{l|}{2.30e-2\quad 2.71}                                                                   & \multicolumn{2}{l|}{1.86e-2\quad 2.09}                                                                   & \multicolumn{2}{c|}{15.87s}        \\ \cline{2-15} 
& 0.2221                                  &      48001                                       & \multicolumn{2}{l|}{3.18e-3\quad 3.03}                                                                   & \multicolumn{2}{l|}{4.55e-3\quad 2.12}                                                                   & \multicolumn{2}{c|}{92.16s}        & \multicolumn{2}{l|}{3.24e-3\quad 2.83}                                                                   & \multicolumn{2}{l|}{4.45e-3\quad 2.06}                                                                   & \multicolumn{2}{c|}{88.11s}        \\ \cline{2-15} 
& 0.1777                                  &           75001                                  & \multicolumn{2}{l|}{1.62e-3\quad 3.02}                                                                   & \multicolumn{2}{l|}{2.87e-3\quad 2.06}                                                                   & \multicolumn{2}{c|}{163.94s}        & \multicolumn{2}{l|}{1.70e-3\quad 2.89}                                                                   & \multicolumn{2}{l|}{2.83e-3\quad 2.03}                                                                   & \multicolumn{2}{c|}{158.39s}        \\ \hline
\multirow{4}{*}{2}                       & 0.8886                                  &    5201                                         & \multicolumn{2}{l|}{6.99e-3\quad ---}                                                                   & \multicolumn{2}{l|}{7.47e-3 \quad ---}                                                                   & \multicolumn{2}{c|}{11.95s}        & \multicolumn{2}{l|}{5.95e-3\quad ---}                                                                   & \multicolumn{2}{l|}{7.24e-3\quad ---}                                                                   & \multicolumn{2}{c|}{9.57s}        \\ \cline{2-15} 
& 0.4443                                  &       20801                                      & \multicolumn{2}{l|}{3.30e-4\quad 4.40}                                                                   & \multicolumn{2}{l|}{9.01e-4\quad 3.05}                                                                   & \multicolumn{2}{c|}{55.58s}        & \multicolumn{2}{l|}{3.19e-4\quad 4.22}                                                                   & \multicolumn{2}{l|}{8.98e-4\quad 3.01}                                                                   & \multicolumn{2}{c|}{56.46}        \\ \cline{2-15} 
& 0.2221                                  &             83201                                & \multicolumn{2}{l|}{1.90e-5\quad 4.12}                                                                   & \multicolumn{2}{l|}{1.12e-4\quad 3.01}                                                                     & \multicolumn{2}{c|}{315.91s}        & \multicolumn{2}{l|}{1.99e-5\quad 4.00}                                                                   & \multicolumn{2}{l|}{1.12e-4\quad 3.00}                                                                   & \multicolumn{2}{c|}{309.51s}        \\ \cline{2-15} 
& 0.1777                                  &          130001                                   & \multicolumn{2}{l|}{7.78e-6\quad 4.00}                                                                   & \multicolumn{2}{l|}{5.74e-5\quad 3.00}                                                                   & \multicolumn{2}{c|}{570.54s}        & \multicolumn{2}{l|}{8.27e-6\quad 3.94}                                                                   & \multicolumn{2}{l|}{5.74e-5\quad 3.00}                                                                   & \multicolumn{2}{c|}{562.16s}        \\ \hline
\end{tabular}}
\caption{Taylor--Green vortex: Comparison of DG-N and DG-C schemes for the velocity at $t = 1.0s$ and pressure at $t=0.995s$, when $\nu=0.01$, $\gamma=0$.}
\label{table_unsteady_0}
\end{center}
\end{table}
% %%%%%%%%%%%%%%%%%%%%%%%%%%%%%%%%%%%
%%%%%%%%%%%%%%%%%%%%%%%%%%%%%%%%%%%%%%
%%%%Kovasznay Flow
%%%%%%%%%%%%%%%%%%%%%%%%%%%%%%%%%%%%%%
\subsection{Kovasznay Flow}\label{K_flow}
%%%%%%%%%%%%%%%%%
In this experiment, we consider the steady Kovasznay flow \cite{Ern2012} with the analytical solutions given by 
\begin{align*}
\label{kovasznay}
    &\ubold(t,\xbold)=\bigg(1-\text{e}^{\lambda x_1}\cos(2\pi x_2),\frac{\lambda}{2\pi}\text{e}^{\lambda x_1}\sin(2\pi x_2)\bigg), \\
    &p(t,\xbold)=-\frac{1}{2}\text{e}^{2\lambda  x_1}-\frac{1}{8\lambda}\bigg(\text{e}^{- \lambda}-\text{e}^{3\lambda}\bigg)
\end{align*}
%%%%%%%%%%%%%%%%%%%%%
with $\lambda=\frac{1}{2\nu}-(\frac{1}{4\nu^2}+4\pi^{2})^{\frac{1}{2}}$ and the simulation domain  $\Omega:=[-0.5,0]\times[1.5,2]$.  All schemes with $\nu=0.025$ are tested on uniform meshes with mesh sizes $h_{\text{max}}\in\{ 0.1768, 0.0884, 0.0442, 0.0354\}$. Other parameters are identical to those given in Experiment \ref{Tay_Ex}.  
%%%%%%%%%%%%%%%%%%%%%%%%%%%%%%%%%%%

The overall performance of $H^1$, $H(\text{div})$, and DG schemes are similar to those in Experiment 4.1, see Tables  \ref{K_H1div}, \ref{table_steady_10}, and \ref{table_steady_0}.
The $H^{1}$ scheme is the best among all schemes when $k=2$. Figure~\ref{DG_velocity_Kovasznay_Flow} shows that the approximation becomes better when increasing polynomial degree and/or decreasing the mesh size for our DG scheme.

From Tables \ref{table_steady_10} and \ref{table_steady_0}, we observe that both DG schemes achieve the expected convergence rates for velocity and pressure when $\gamma\in\{0,10\}$. The errors and running time of DG-N are slightly smaller than DG-C when $\gamma=0$. In contrast to the Taylor--Green vortex, the running time of both DG schemes for the stationary Kovasznay flow are similar because there is no dynamic update at each time step. We also observe that the runtime tends to decrease when $\gamma$ increases from $0$ to $10$, especially for DG-C. An interesting observation is that there is a trend of increasing of errors in both velocity and pressure when we increase $\gamma$ from $0$ to $10$, which indicates that the penalty term \eqref{Penalty_expression} may fail to reduce the errors in some cases when the convective term appears. It is shown in \cite{akbas2018analogue} that the solution will converge to BDM solution if $\gamma\rightarrow\infty$ for the Stokes problem, which indicates  a decreasing of absolute errors when increasing $\gamma$ (at least for Stokes flow). Due to nonlinearity of Naiver--Stokes equations, theoretical analysis on the optimality of $\gamma$ seems not available. Hence we will investigate this influence numerically in the next experiment. 

%%%%%%%%%%%%%%%%%%%%%%%%%%%%%%%%%%%
%%%%%%%%%%%%%%%%%%%%%%%%%%%%%%%%
%%%%%%%%%%%%%%%%%%%%%%%%%%%%%%%%
%% Example Kovasznay flow
%%%%%%%%%%%%%%%%%%%%%%%%%%%%%%
% Please add the following required packages to your document preamble:
% \usepackage{multirow}

\begin{table}
\begin{center}
\resizebox{\columnwidth}{!}{
\begin{tabular}{|l|l|c|l|l|l|l|c|l|c|l|l|l|l|l|c|}
\hline
\multicolumn{1}{|c|}{\multirow{2}{*}{$k$}} & \multicolumn{1}{c|}{\multirow{2}{*}{$h_{\text{max}}$}} & \multicolumn{7}{c|}{$H^{1}$}                                                                                                                                                                                                                        & \multicolumn{7}{c|}{$H(\text{div})$}                                                                                                                                                                           \\ \cline{3-16} 
\multicolumn{1}{|c|}{}                   & \multicolumn{1}{c|}{}                   & \multicolumn{1}{l|}{d.o.f} & \multicolumn{2}{c|}{\begin{tabular}[c]{@{}c@{}}$\|\ubold-\ubold_h\|_{L^{2}(\Omega)}$\\ error   order\end{tabular}} & \multicolumn{2}{c|}{\begin{tabular}[c]{@{}c@{}}$\|p-p_h\|_{L^{2}(\Omega)}$\\ error   order\end{tabular}} & \multicolumn{2}{c|}{Runtime}  & d.o.f  & \multicolumn{2}{c|}{\begin{tabular}[c]{@{}c@{}}$\|\ubold-\ubold_h\|_{L^{2}(\Omega)}$\\ error   order\end{tabular}} & \multicolumn{3}{c|}{\begin{tabular}[c]{@{}c@{}}$\|p-p_h\|_{L^{2}(\Omega)}$\\ error   order\end{tabular}} & Runtime  \\ \hline
\multirow{4}{*}{0}                       & 0.1768                                  & N/A                        & \multicolumn{2}{c|}{N/A}                                                                & \multicolumn{2}{c|}{N/A}                                                                & \multicolumn{2}{c|}{N/A}      & 2113   & \multicolumn{2}{c|}{4.45e-2\:\quad   -----}                                                      & \multicolumn{3}{c|}{6.73e-2\:\quad   -----}                                                      & 1.49e-1s \\ \cline{2-16} 
& 0.0884                                  & N/A                        & \multicolumn{2}{c|}{N/A}                                                                & \multicolumn{2}{c|}{N/A}                                                                & \multicolumn{2}{c|}{N/A}      & 8321   & \multicolumn{2}{c|}{1.00e-2\quad  2.15}                                                      & \multicolumn{3}{c|}{3.00e-2\quad  1.17}                                                      & 6.01e-1s \\ \cline{2-16} 
& 0.0442                                  & N/A                        & \multicolumn{2}{c|}{N/A}                                                                & \multicolumn{2}{c|}{N/A}                                                                & \multicolumn{2}{c|}{N/A}      & 33025  & \multicolumn{2}{c|}{2.46e-3\quad  2.03}                                                      & \multicolumn{3}{c|}{1.43e-2\quad  1.06}                                                      & 3.06s    \\ \cline{2-16} 
& 0.0354                                  & N/A                        & \multicolumn{2}{c|}{N/A}                                                                & \multicolumn{2}{c|}{N/A}                                                                & \multicolumn{2}{c|}{N/A}      & 51521  & \multicolumn{2}{c|}{1.57e-3\quad  1.99}                                                      & \multicolumn{3}{c|}{1.14e-2\quad  1.03}                                                      & 5.05s    \\ \hline
\multirow{4}{*}{1}                       & 0.1768                                  & 2468                       & \multicolumn{2}{c|}{3.37e-3\:\quad   -----}                                                      & \multicolumn{2}{c|}{2.26e-3\:\quad   -----}                                                      & \multicolumn{2}{c|}{1.13e-1s} & 5473   & \multicolumn{2}{c|}{2.69e-3\:\quad   -----}                                                      & \multicolumn{3}{c|}{3.70e-3\:\quad   -----}                                                      & 5.79e-1s \\ \cline{2-16} 
& 0.0884                                  & 9540                       & \multicolumn{2}{c|}{4.17e-4\quad  3.01}                                                      & \multicolumn{2}{c|}{5.17e-4\quad  2.13}                                                      & \multicolumn{2}{c|}{4.61e-1s} & 21697  & \multicolumn{2}{c|}{3.31e-4\quad  3.02}                                                      & \multicolumn{3}{c|}{8.11e-4\quad  2.19}                                                      & 2.93s    \\ \cline{2-16} 
& 0.0442                                  & 37508                      & \multicolumn{2}{c|}{5.20e-5\quad  3.00}                                                      & \multicolumn{2}{c|}{1.28e-4\quad  2.01}                                                      & \multicolumn{2}{c|}{2.22s}    & 86401  & \multicolumn{2}{c|}{4.13e-5\quad  3.00}                                                      & \multicolumn{3}{c|}{1.87e-4\quad  2.12}                                                      & 15.56s   \\ \cline{2-16} 
& 0.0354                                  & 58404                      & \multicolumn{2}{c|}{2.66e-5\quad  3.00}                                                      & \multicolumn{2}{c|}{8.18e-5\quad  2.00}                                                      & \multicolumn{2}{c|}{3.54s}    & 134881 & \multicolumn{2}{c|}{2.11e-5\quad  3.00}                                                      & \multicolumn{3}{c|}{1.17e-4\quad  2.08}                                                      & 26.68s   \\ \hline
\multirow{4}{*}{2}                       & 0.1768                                  & 5892                       & \multicolumn{2}{c|}{1.61e-4\:\quad   -----}                                                      & \multicolumn{2}{c|}{1.23e-4\:\quad   -----}                                                      & \multicolumn{2}{c|}{3.66e-1s} & 10369  & \multicolumn{2}{c|}{1.68e-4\:\quad   -----}                                                      & \multicolumn{3}{c|}{3.06e-4\:\quad   -----}                                                      & 2.00s    \\ \cline{2-16} 
& 0.0884                                  & 23044                      & \multicolumn{2}{c|}{1.01e-5\quad  4.00}                                                      & \multicolumn{2}{c|}{1.15e-5\quad  3.42}                                                      & \multicolumn{2}{c|}{1.58s}    & 41217  & \multicolumn{2}{c|}{1.08e-5\quad  3.96}                                                      & \multicolumn{3}{c|}{3.12e-5\quad  3.29}                                                      & 10.32s    \\ \cline{2-16} 
& 0.0442                                  & 91140                      & \multicolumn{2}{c|}{6.32e-7\quad  4.00}                                                      & \multicolumn{2}{c|}{1.26e-6\quad  3.19}                                                      & \multicolumn{2}{c|}{7.24s}    & 164353 & \multicolumn{2}{c|}{6.87e-7\quad  3.98}                                                      & \multicolumn{3}{c|}{3.50e-6\quad  3.16}                                                      & 58.00s   \\ \cline{2-16} 
& 0.0354                                  & 142084                     & \multicolumn{2}{c|}{2.59e-7\quad  4.00}                                                      & \multicolumn{2}{c|}{6.34e-7\quad  3.09}                                                      & \multicolumn{2}{c|}{11.81s}    & 256641 & \multicolumn{2}{c|}{2.82e-7\quad  3.99}                                                      & \multicolumn{3}{c|}{1.75e-6\quad  3.10}                                                      & 105.40s   \\ \hline
\end{tabular}}
\caption{{{Kovasznay flow: Velocity and pressure of $H^1$ and upwind $H(\text{div})$ schemes, $\nu=0.025$.}}}
\label{K_H1div}
\end{center}
\end{table}
%%%%%%%%%%%%%%%%%%%%%%%%%
%%%%%%%%%%%%%%%%%%%%%%%%%
\begin{table}
\begin{center}
\resizebox{\columnwidth}{!}{
\begin{tabular}{|c|c|c|c|c|c|c|c|c|c|c|c|c|c|c|}
\hline
\multicolumn{1}{|c|}{\multirow{2}{*}{$k$}} & \multicolumn{1}{c|}{\multirow{2}{*}{$h_{\text{max}}$}} & \multicolumn{1}{c|}{\multirow{2}{*}{d.o.f}} & \multicolumn{6}{c|}{DG-N}                                                                                                                                                                                          & \multicolumn{6}{c|}{DG-C}                                                                                                                                                                                      \\ \cline{4-15} 
\multicolumn{1}{|c|}{}                   & \multicolumn{1}{c|}{}                   & \multicolumn{1}{c|}{}                       & \multicolumn{2}{c|}{\begin{tabular}[c]{@{}c@{}}$\|\ubold-\ubold_h\|_{L^{2}(\Omega)}$\\ error\quad   order\end{tabular}} & \multicolumn{2}{c|}{\begin{tabular}[c]{@{}c@{}}$\|p-p_h\|_{L^{2}(\Omega)}$\\ error\quad   order\end{tabular}} & \multicolumn{2}{c|}{{Runtime}} & \multicolumn{2}{c|}{\begin{tabular}[c]{@{}c@{}}$\|\ubold-\ubold_h\|_{L^{2}(\Omega)}$\\ error  \quad order\end{tabular}} & \multicolumn{2}{c|}{\begin{tabular}[c]{@{}c@{}}$\|p-p_h\|_{L^{2}(\Omega)}$\\ error  \quad order\end{tabular}} & \multicolumn{2}{l|}{{Runtime}} \\ \hline
\multirow{4}{*}{0}                       & 0.1768                                  &        3585                                     & \multicolumn{2}{l|}{3.77e-2 \quad  ---}                                                                   & \multicolumn{2}{l|}{5.89e-2\quad ---}                                                                   & \multicolumn{2}{c|}{1.27e-1s}        & \multicolumn{2}{l|}{4.91e-2\quad ---}                                                                   & \multicolumn{2}{l|}{6.52e-2\quad ---}                                                                   & \multicolumn{2}{c|}{1.25e-1s}        \\ \cline{2-15} 
& 0.0884                                  &           14337                                  & \multicolumn{2}{l|}{9.62e-3\quad 1.97}                                                                   & \multicolumn{2}{l|}{2.86e-2\quad 1.04}                                                                   & \multicolumn{2}{c|}{5.55e-1s}        & \multicolumn{2}{l|}{1.04e-2\quad 2.24}                                                                   & \multicolumn{2}{l|}{3.01e-2\quad 1.11}                                                                   & \multicolumn{2}{c|}{5.81e-1s}        \\ \cline{2-15} 
& 0.0442                                  &    57345                                         & \multicolumn{2}{l|}{2.44e-3\quad 1.98}                                                                   & \multicolumn{2}{l|}{1.41e-2\quad 1.02}                                                                   & \multicolumn{2}{c|}{2.91s}        & \multicolumn{2}{l|}{2.46e-3\quad 2.08}                                                                   & \multicolumn{2}{l|}{1.47e-2\quad 1.04}                                                                   & \multicolumn{2}{c|}{2.96s}        \\ \cline{2-15} 
& 0.0354                                  &     89601                                        & \multicolumn{2}{l|}{1.57e-3\quad 1.98}                                                                   & \multicolumn{2}{l|}{1.13e-2\quad 1.01}                                                                   & \multicolumn{2}{c|}{4.82s}        & \multicolumn{2}{l|}{1.57e-3\quad 2.02}                                                                   & \multicolumn{2}{l|}{1.17e-2\quad 1.02}                                                                   & \multicolumn{2}{c|}{4.90s}        \\ \hline
\multirow{4}{*}{1}                       & 0.1768                                  &       7681                                      & \multicolumn{2}{l|}{2.59e-3\quad ---}                                                                   & \multicolumn{2}{l|}{2.99e-3\quad ---}                                                                   & \multicolumn{2}{c|}{4.65e-1s}        & \multicolumn{2}{l|}{2.59e-3\quad ---}                                                                   & \multicolumn{2}{l|}{2.55e-3\quad ---}                                                                   & \multicolumn{2}{c|}{4.62e-1s}        \\ \cline{2-15} 
& 0.0884                                  &       30721                                      & \multicolumn{2}{l|}{3.25e-4\quad 3.00}                                                                   & \multicolumn{2}{l|}{7.01e-4\quad 2.09}                                                                   & \multicolumn{2}{c|}{2.61s}        & \multicolumn{2}{l|}{3.23e-4\quad 3.00}                                                                   & \multicolumn{2}{l|}{5.64e-4\quad 2.17}                                                                   & \multicolumn{2}{c|}{2.63s}        \\ \cline{2-15} 
& 0.0442                                  &      122881                                       & \multicolumn{2}{l|}{4.07e-5\quad 3.00}                                                                   & \multicolumn{2}{l|}{1.69e-4\quad 2.05}                                                                   & \multicolumn{2}{c|}{14.73s}        & \multicolumn{2}{l|}{4.03e-5\quad 3.00}                                                                   & \multicolumn{2}{l|}{1.31e-4\quad 2.11}                                                                   & \multicolumn{2}{c|}{14.98s}        \\ \cline{2-15} 
& 0.0354                                  &           192001                                  & \multicolumn{2}{l|}{2.08e-5\quad 3.00}                                                                   & \multicolumn{2}{l|}{1.08e-4\quad 2.03}                                                                   & \multicolumn{2}{c|}{26.13s}        & \multicolumn{2}{l|}{2.06e-5\quad 3.00}                                                                   & \multicolumn{2}{l|}{8.24e-5\quad 2.07}                                                                   & \multicolumn{2}{c|}{26.25s}        \\ \hline
\multirow{4}{*}{2}                       & 0.1768                                  &    13313                                         & \multicolumn{2}{l|}{1.37e-4\quad ---}                                                                   & \multicolumn{2}{l|}{2.02e-4 \quad ---}                                                                   & \multicolumn{2}{c|}{1.58s}        & \multicolumn{2}{l|}{1.37e-4\quad ---}                                                                   & \multicolumn{2}{l|}{1.83e-4\quad ---}                                                                   & \multicolumn{2}{c|}{1.53s}        \\ \cline{2-15} 
& 0.0884                                  &       53249                                      & \multicolumn{2}{l|}{8.86e-6\quad 3.95}                                                                   & \multicolumn{2}{l|}{2.50e-5\quad 3.01}                                                                   & \multicolumn{2}{c|}{9.17s}        & \multicolumn{2}{l|}{8.87e-6\quad 3.95}                                                                   & \multicolumn{2}{l|}{2.36e-5\quad 2.96}                                                                   & \multicolumn{2}{c|}{9.16s}        \\ \cline{2-15} 
& 0.0442                                  &             212993                                & \multicolumn{2}{l|}{5.62e-7\quad 3.98}                                                                   & \multicolumn{2}{l|}{3.08e-6\quad 3.02}                                                                     & \multicolumn{2}{c|}{53.67s}        & \multicolumn{2}{l|}{5.63e-7\quad 3.98}                                                                   & \multicolumn{2}{l|}{2.96e-6\quad 2.99}                                                                   & \multicolumn{2}{c|}{55.35s}        \\ \cline{2-15} 
& 0.0354                                  &          332801                                   & \multicolumn{2}{l|}{2.31e-7\quad 3.99}                                                                   & \multicolumn{2}{l|}{1.57e-6\quad 3.01}                                                                   & \multicolumn{2}{c|}{101.30s}        & \multicolumn{2}{l|}{2.31e-7\quad 3.99}                                                                   & \multicolumn{2}{l|}{1.52e-6\quad 3.00}                                                                   & \multicolumn{2}{c|}{95.62s}        \\ \hline
\end{tabular}}
\caption{Kovasznay flow: Comparison of DG-N and DG-C schemes when $\nu=0.025$ and $\gamma=10$.}
\label{table_steady_10}
\end{center}
\end{table}

%%%%%%%%%%%%%%%%%%%%%%%%%%%%%%%%
%%%%%%%%%%%%%%%%%%%%%%%%%%%%%%%%%

\begin{table}
\begin{center}
\resizebox{\columnwidth}{!}{
\begin{tabular}{|c|c|c|c|c|c|c|c|c|c|c|c|c|c|c|}
\hline
\multicolumn{1}{|c|}{\multirow{2}{*}{$k$}} & \multicolumn{1}{c|}{\multirow{2}{*}{$h_{\text{max}}$}} & \multicolumn{1}{c|}{\multirow{2}{*}{d.o.f}} & \multicolumn{6}{c|}{DG-N}                                                                                                                                                                                          & \multicolumn{6}{c|}{DG-C}                                                                                                                                                                                      \\ \cline{4-15} 
\multicolumn{1}{|c|}{}                   & \multicolumn{1}{c|}{}                   & \multicolumn{1}{c|}{}                       & \multicolumn{2}{c|}{\begin{tabular}[c]{@{}c@{}}$\|\ubold-\ubold_h\|_{L^{2}(\Omega)}$\\ error\quad   order\end{tabular}} & \multicolumn{2}{c|}{\begin{tabular}[c]{@{}c@{}}$\|p-p_h\|_{L^{2}(\Omega)}$\\ error\quad   order\end{tabular}} & \multicolumn{2}{c|}{{{Runtime}}} & \multicolumn{2}{c|}{\begin{tabular}[c]{@{}c@{}}$\|\ubold-\ubold_h\|_{L^{2}(\Omega)}$\\ error  \quad order\end{tabular}} & \multicolumn{2}{c|}{\begin{tabular}[c]{@{}c@{}}$\|p-p_h\|_{L^{2}(\Omega)}$\\ error  \quad order\end{tabular}} & \multicolumn{2}{l|}{{{Runtime}}} \\ \hline
\multirow{4}{*}{0}                       & 0.1768                                  &        3585                                     & \multicolumn{2}{l|}{4.04e-2 \quad  ---}                                                                   & \multicolumn{2}{l|}{6.05e-2\quad ---}                                                                   & \multicolumn{2}{c|}{1.62e-1s}        & \multicolumn{2}{l|}{8.00e-2\quad ---}                                                                   & \multicolumn{2}{l|}{1.08e-1\quad ---}                                                                   & \multicolumn{2}{c|}{1.28e-1s}        \\ \cline{2-15} 
& 0.0884                                  &           14337                                  & \multicolumn{2}{l|}{1.03e-2\quad 1.97}                                                                   & \multicolumn{2}{l|}{2.84e-2\quad 1.09}                                                                   & \multicolumn{2}{c|}{6.86e-1s}        & \multicolumn{2}{l|}{1.14e-2\quad 2.81}                                                                   & \multicolumn{2}{l|}{2.96e-2\quad 1.87}                                                                   & \multicolumn{2}{c|}{7.10e-1s}        \\ \cline{2-15} 
& 0.0442                                  &    57345                                         & \multicolumn{2}{l|}{2.50e-3\quad 2.05}                                                                   & \multicolumn{2}{l|}{1.38e-2\quad 1.04}                                                                   & \multicolumn{2}{c|}{3.21s}        & \multicolumn{2}{l|}{2.65e-3\quad 2.11}                                                                   & \multicolumn{2}{l|}{1.40e-2\quad 1.08}                                                                   & \multicolumn{2}{c|}{3.59s}        \\ \cline{2-15} 
& 0.0354                                  &     89601                                        & \multicolumn{2}{l|}{1.58e-3\quad 2.06}                                                                   & \multicolumn{2}{l|}{1.10e-2\quad 1.02}                                                                   & \multicolumn{2}{c|}{4.86s}        & \multicolumn{2}{l|}{1.67e-3\quad 2.06}                                                                   & \multicolumn{2}{l|}{1.12e-2\quad 1.02}                                                                   & \multicolumn{2}{c|}{5.67s}        \\ \hline
\multirow{4}{*}{1}                       & 0.1768                                  &       7681                                      & \multicolumn{2}{l|}{2.34e-3\quad ---}                                                                   & \multicolumn{2}{l|}{2.18e-3\quad ---}                                                                   & \multicolumn{2}{c|}{4.54e-1s}        & \multicolumn{2}{l|}{2.29e-3\quad ---}                                                                   & \multicolumn{2}{l|}{2.12e-3\quad ---}                                                                   & \multicolumn{2}{c|}{4.75e-1s}        \\ \cline{2-15} 
& 0.0884                                  &       30721                                      & \multicolumn{2}{l|}{2.87e-4\quad 3.03}                                                                   & \multicolumn{2}{l|}{5.01e-4\quad 2.12}                                                                   & \multicolumn{2}{c|}{2.60s}        & \multicolumn{2}{l|}{2.84e-4\quad 3.01}                                                                   & \multicolumn{2}{l|}{5.06e-4\quad 2.07}                                                                   & \multicolumn{2}{c|}{2.84s}        \\ \cline{2-15} 
& 0.0442                                  &      122881                                       & \multicolumn{2}{l|}{3.54e-5\quad 3.02}                                                                   & \multicolumn{2}{l|}{1.22e-4\quad 2.04}                                                                   & \multicolumn{2}{c|}{14.63s}        & \multicolumn{2}{l|}{3.55e-5\quad 3.00}                                                                   & \multicolumn{2}{l|}{1.25e-4\quad 2.02}                                                                   & \multicolumn{2}{c|}{15.26s}        \\ \cline{2-15} 
& 0.0354                                  &           192001                                  & \multicolumn{2}{l|}{1.81e-5\quad 3.01}                                                                   & \multicolumn{2}{l|}{7.77e-5\quad 2.02}                                                                   & \multicolumn{2}{c|}{25.49s}        & \multicolumn{2}{l|}{1.82e-5\quad 3.00}                                                                   & \multicolumn{2}{l|}{7.96e-5\quad 2.01}                                                                   & \multicolumn{2}{c|}{26.52s}        \\ \hline
\multirow{4}{*}{2}                       & 0.1768                                  &    13313                                         & \multicolumn{2}{l|}{1.21e-4\quad ---}                                                                   & \multicolumn{2}{l|}{1.42e-4 \quad ---}                                                                   & \multicolumn{2}{c|}{1.61s}        & \multicolumn{2}{l|}{1.21e-4\quad ---}                                                                   & \multicolumn{2}{l|}{1.51e-4\quad ---}                                                                   & \multicolumn{2}{c|}{1.56s}        \\ \cline{2-15} 
& 0.0884                                  &       53249                                      & \multicolumn{2}{l|}{7.75e-6\quad 3.96}                                                                   & \multicolumn{2}{l|}{1.82e-5\quad 2.97}                                                                   & \multicolumn{2}{c|}{9.39s}        & \multicolumn{2}{l|}{7.78e-6\quad 3.96}                                                                   & \multicolumn{2}{l|}{1.97e-5\quad 2.94}                                                                   & \multicolumn{2}{c|}{9.74s}        \\ \cline{2-15} 
& 0.0442                                  &             212993                                & \multicolumn{2}{l|}{4.90e-7\quad 3.98}                                                                   & \multicolumn{2}{l|}{2.28e-6\quad 2.99}                                                                     & \multicolumn{2}{c|}{54.09s}        & \multicolumn{2}{l|}{4.93e-7\quad 3.98}                                                                   & \multicolumn{2}{l|}{2.49e-6\quad 2.99}                                                                   & \multicolumn{2}{c|}{55.67s}        \\ \cline{2-15} 
& 0.0354                                  &          332801                                   & \multicolumn{2}{l|}{2.01e-7\quad 3.99}                                                                   & \multicolumn{2}{l|}{1.17e-6\quad 3.00}                                                                   & \multicolumn{2}{c|}{94.04s}        & \multicolumn{2}{l|}{2.02e-7\quad 3.99}                                                                   & \multicolumn{2}{l|}{1.27e-6\quad 3.00}                                                                   & \multicolumn{2}{c|}{94.78s}        \\ \hline
\end{tabular}}
\caption{Kovasznay flow: Comparison of DG-N and DG-C schemes when $\nu=0.025$ and $\gamma=0$.}
\label{table_steady_0}
\end{center}
\end{table}
%%%%%%%%%%%%%%%%%%%%%%%%%%%%%%%%%
%%%%%%%%%%%%%%%%%%%%%%%%%%%%%%%%
\begin{figure*}
\centering
\begin{multicols}{2}
    \includegraphics[width=1.0\linewidth,trim={7cm 2cm 0cm 2cm},clip]{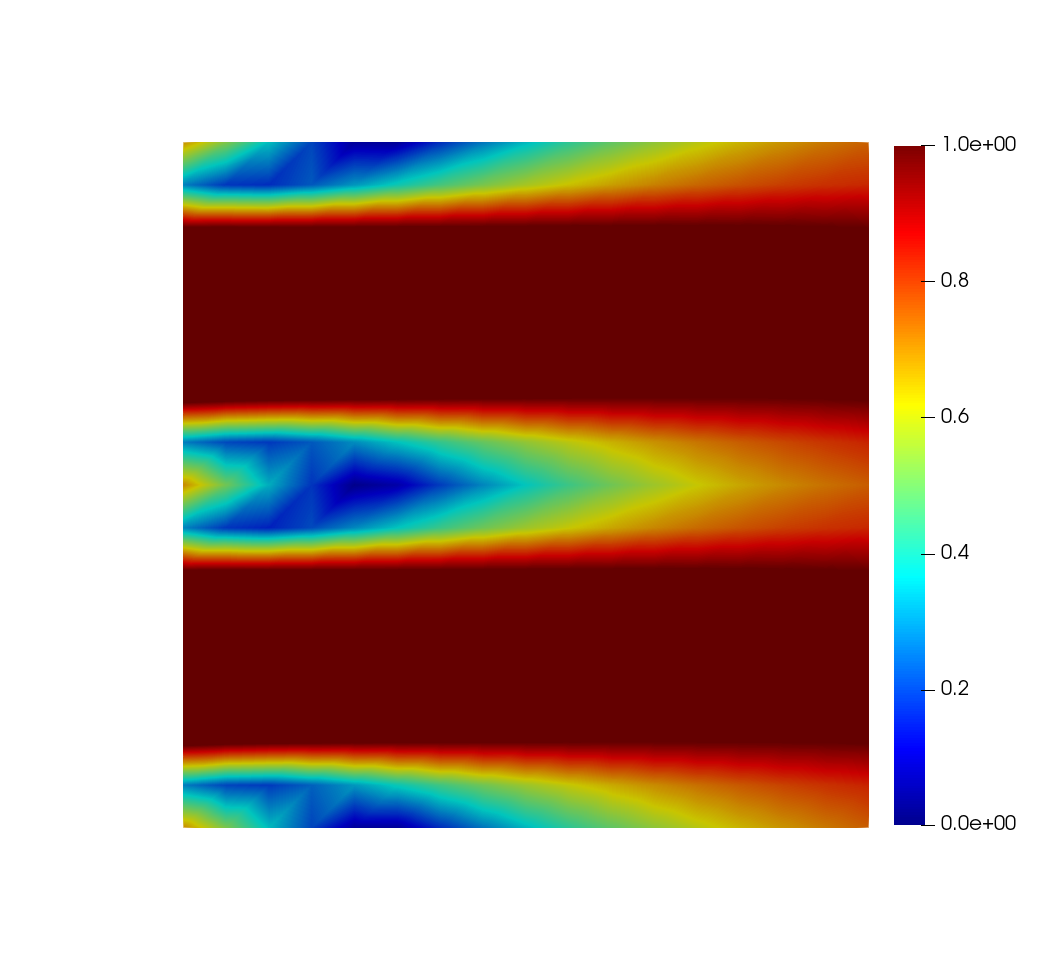}\par
    \includegraphics[width=1.0\linewidth,trim={7cm 2cm 0cm 2cm},clip]{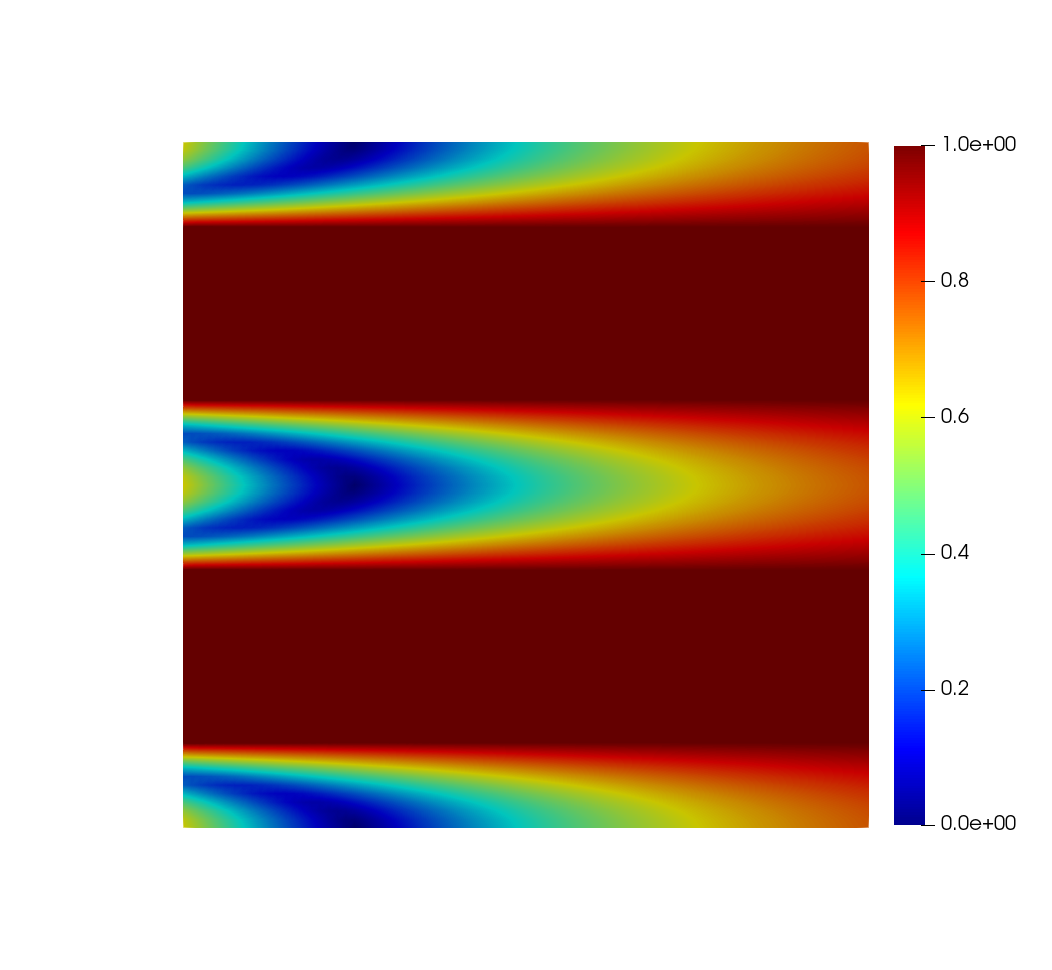}\par 
    \end{multicols}
\begin{multicols}{2}
    \includegraphics[width=1.0\linewidth,trim={7cm 2cm 0cm 2cm},clip]{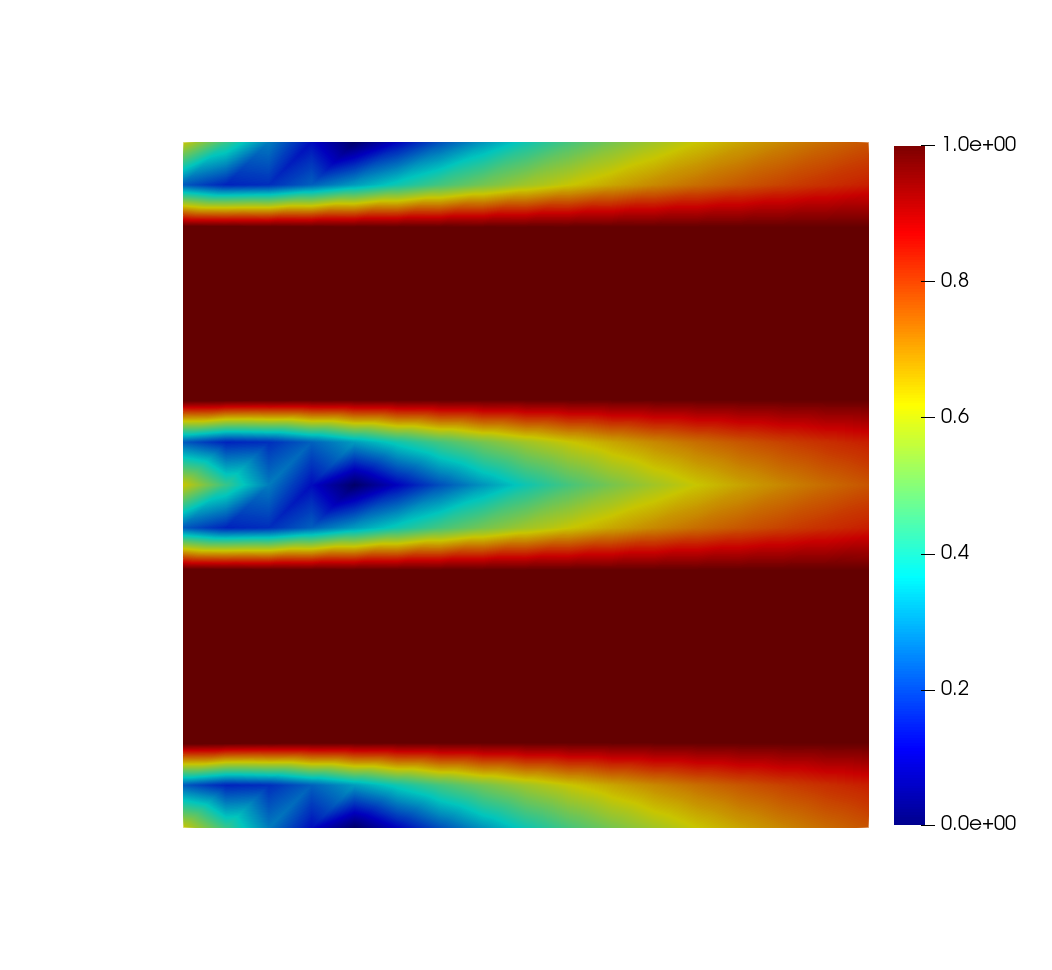}\par
    \includegraphics[width=1.0\linewidth,trim={7cm 2cm 0cm 2cm},clip]{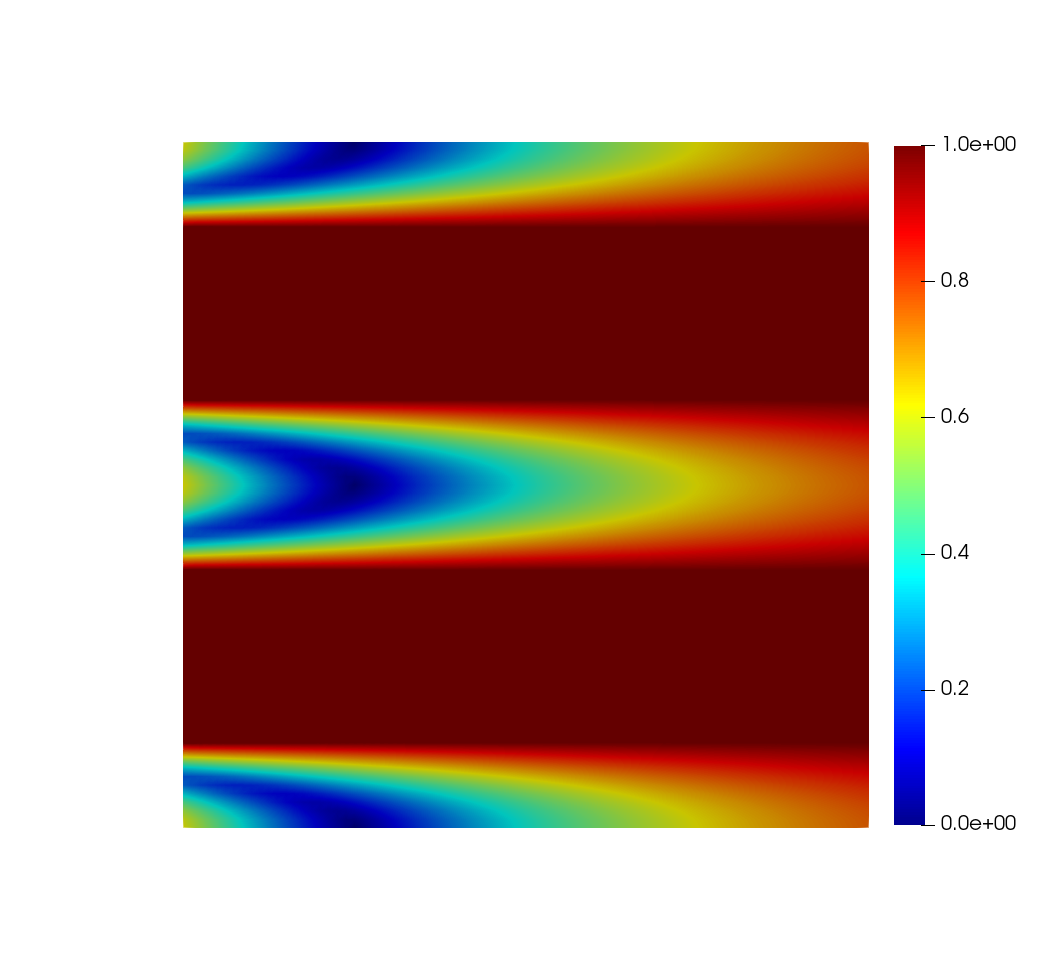}\par
\end{multicols}
\caption{Kovasznay Flow: Contours of velocity magnitude from the DG-N scheme with $h_{\text{max}} = 0.1768$ (left) and $h_{\text{max}}=0.0354$ (right) when $k=0$ (upper row) and $k=2$ (bottom row), $\nu=0.025$ and $\gamma=10$.}
\label{DG_velocity_Kovasznay_Flow}
\end{figure*}
%%%%%%%%%%%%%%%%%%%%%%%%%%%%%%%%%%
%%%%%%%%%%%%%%%%%%%%%%%%%%%%%%%%%
%%%%%%%%%%%%%%%%%%%%%%%%%%%%%%%%%%
\subsection{Influence of $\gamma$ and $\gamma_{gd}$} \label{gamma_influence}
%%%%%%%%%%%%%%%%
In this subsection, we go back to the original form of the penalty term \eqref{dh} with $\gamma_F=\gamma h_F^{-1}$ which is
%%%%%%%%%%%%%%%%%%%%%%%%
\begin{align*}
d_h(\bm{u}_h,\bm{v}_h)=\gamma_{gd}(\nabla_h\cdot\bm{u}_h,\nabla_h\cdot\bm{v}_h)+\gamma\sum_{F\in\mathcal{F}_h}h_F^{-1}\langle\llbracket \vbold_h \rrbracket\cdot\nbold_F,\llbracket \ubold_h \rrbracket\cdot\nbold_F\rangle_F
\end{align*}
and study the influence of $\gamma_{gd}$ and $\gamma$ {on the velocity approximation of the DG schemes}. The model problems is the potential flow (cf.~\cite{akbas2018analogue})
\begin{align*}
    &\ubold(t,\xbold)=\bigg(5x_1^{4}-30x_1^{2}x_2^{2}+5x_2^{4},-20x_1^{3}x_2+20x_1x_2^{3}\bigg),\\
    &p(t,\xbold)=-\frac{1}{2}|\ubold(t,\xbold)|^{2}
\end{align*}
on the domain $\Omega:=[-1,1]^2$ consisting of ten colliding jets which meets at the stagnation point $(0, 0)$ (see Figure~\ref{stream_line_potential}) and the Kovasznay flow (in Subsection \ref{K_flow}) with $\nu=0.025$ and $\gamma$ (respectively $\gamma_{gd}$) ranging from $0,1,5,25,125$ when $k=2,3$ and $h=0.0884$.

It can be observed from Tables \ref{table_lamda} and \ref{table_lamda_gd}  that larger $\gamma$ (while $\gamma_{gd}=0$) decreases the velocity error tremendously in potential flow, but increases the errors in Kovasznay flow while keeping the same order of error magnitude. This observation indicates that the addition of penalty term $\gamma\sum_{F\in\mathcal{F}_h}h_F^{-1}\langle\llbracket \vbold_h \rrbracket\cdot\nbold_F,\llbracket \ubold_h \rrbracket\cdot\nbold_F\rangle_F$ may fail to decrease errors, but it may not affect the order of error too much. To the best of our knowledge, we have not seen a similar report for the DG schemes in the literature. Finally, we do not observe an obvious increasing or decreasing error when increasing $\gamma_{gd}$ (while keeping $\gamma=0$) except for the case from $\gamma_{gd}=0$ to $\gamma_{gd}=1$.
%%%%%%%%%%%%%%%%%%%%%
\begin{figure}[ht]
\centering
\begin{subfigure}{0.4\textwidth}
  \centering
  \includegraphics[width=1.0\linewidth,trim={7cm 2cm 0cm 2cm},clip]{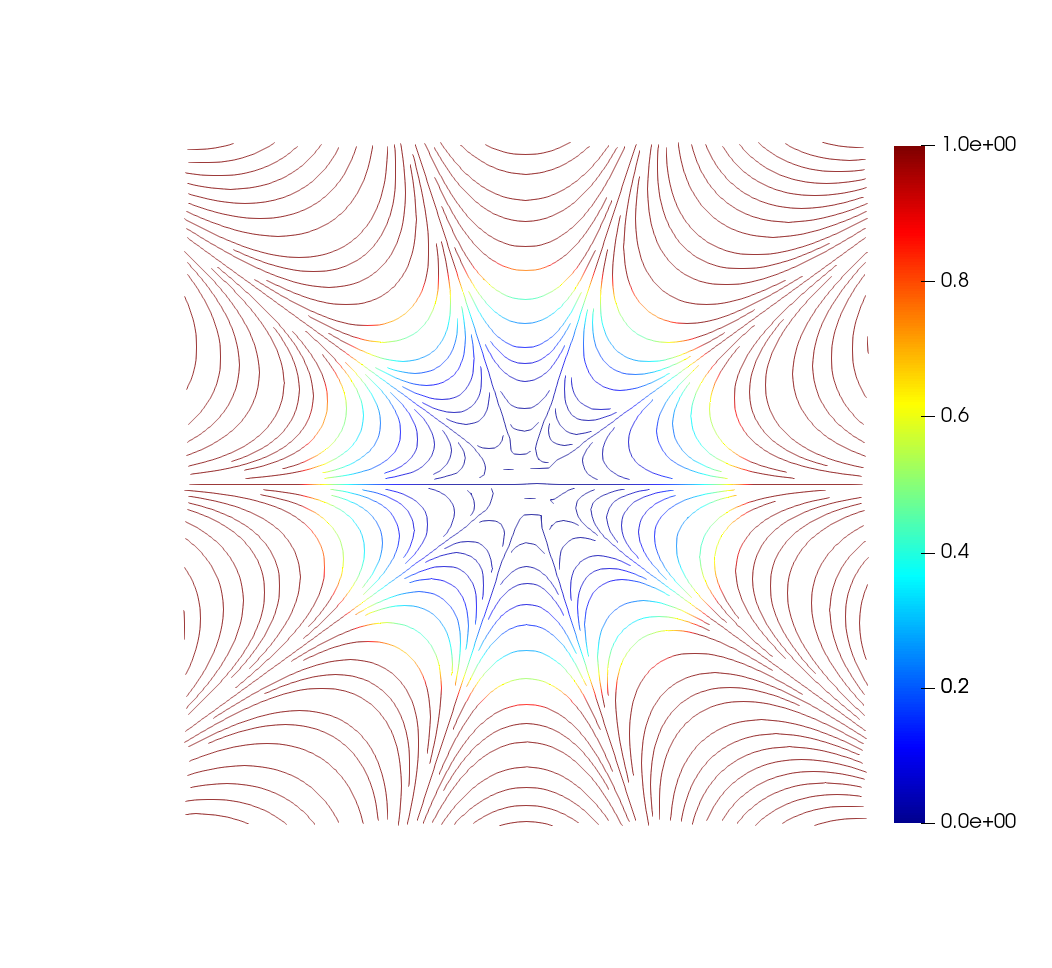}
\end{subfigure}
\caption{Stream lines for potential flow.}
\label{stream_line_potential}
\end{figure}
% %%%%%%%%%%%%%%%%%%%%%%%%%%%%%%%%%%%%%%%%%%%%%%%%
%%%%%%%%%%%%%%%%%%%%%%%%%%%%%%%
\begin{table}
\begin{center}
\makeatletter\def\@captype{table}\makeatother
\resizebox{\textwidth}{40mm}{
\begin{tabular}{|p{0.1cm}|c|c|l|l|l|l|}
\hline
\multicolumn{1}{|c|}{\multirow{3}{*}{$k$}} & \multirow{3}{*}{$\gamma$} & \multicolumn{2}{c|}{Kovasznay flow $\|\ubold-\ubold_{h}\|_{L^{2}(\Omega)}$}                                                                                                     & \multicolumn{2}{c|}{Potential flow $\|\ubold-\ubold_{h}\|_{L^{2}(\Omega)}$}                                                                                                   \\ \cline{3-6} 
\multicolumn{1}{|c|}{}                   &                    &                         \begin{tabular}[c]{@{}l@{}} {DG-N}\end{tabular} & \begin{tabular}[c]{@{}l@{}} \quad~{\qquad DG-C} \end{tabular} & \begin{tabular}[c]{@{}l@{}}~\:{DG-N}\end{tabular} & \begin{tabular}[c]{@{}l@{}}{\qquad DG-C}\end{tabular} \\ \hline
\multirow{5}{*}{2} & 0  & 7.75e-6                      & \qquad~ 7.78e-6    & ~\:  3.26e-4  & \quad ~ 2.60e-4           \\ \cline{2-6} 
& 1  & 8.25e-6     & \qquad ~ 8.27e-6                    &  ~\: 2.09e-4   & \quad ~ 2.09e-4                                                      \\ \cline{2-6} 
& 5  & 8.72e-6                                   
& \qquad ~ 8.73e-6   & ~\: 9.76e-5                                   & \quad ~ 9.13e-5                                                \\ \cline{2-6} 
& 25  & 8.96e-6    & \qquad ~ 8.96e-6                    & ~\: 2.86e-5    & \quad ~ 2.57e-5   \\

\cline{2-6} 
& 125  & 9.02e-6    & \qquad ~ 9.02e-6                    & ~\: 7.48e-6    & \quad ~ 6.82e-6  
\\ \hline
\multirow{5}{*}{3} & 0  & 1.41e-7                      & \qquad ~ 1.42e-7   & ~\: 6.43e-6  & \quad ~ 5.05e-6           \\ \cline{2-6} 
& 1  & 1.47e-7   & \qquad ~ 1.47e-7                    & ~\: 4.39e-6   & \quad ~ 4.24e-6        \\ \cline{2-6} 
& 5 & 1.55e-7   & \qquad ~ 1.55e-7        
& ~\: 2.05e-6    & \quad ~ 1.92e-6                                                    \\ \cline{2-6} 
& 25  & 1.60e-7                                       
& \qquad ~ 1.59e-7      & ~\: 5.86e-7                               & \quad ~ 5.32e-7  
\\
\cline{2-6}
& 125  & 1.61e-7                                    
 & \qquad ~ 1.61e-7     & ~\: 1.30e-7            & \quad ~ 1.16e-7  
\\ \hline
\end{tabular}}
\caption{Comparison between DG-N and DG-C schemes with $\nu=0.025$ and $h_{\text{max}} =0.0884$ for Kovasznay flow and potential flow when $\gamma_{gd}=0$.}
\label{table_lamda}
\end{center}
\end{table}
% %%%%%%%%%%%%%%%%%%%%%%%%%%%%
% %%%%%%%%%%%%%%%%%%%%%%%%%%%%
% %%%%%%%%%%%%%%%%%%%%%%%%%%%%%%%%%%%%%%%%%%%%%%%%%%%
\begin{table}
\begin{center}
\makeatletter\def\@captype{table}\makeatother
\resizebox{\textwidth}{40mm}{
\begin{tabular}{|p{0.1cm}|c|c|l|l|l|l|}
\hline
\multicolumn{1}{|c|}{\multirow{3}{*}{$k$}} & \multirow{3}{*}{$\gamma_{gd}$} & \multicolumn{2}{c|}{Kovasznay flow $\|\ubold-\ubold_{h}\|_{L^{2}(\Omega)}$}                                                                                                     & \multicolumn{2}{c|}{Potential flow $\|\ubold-\ubold_{h}\|_{L^{2}(\Omega)}$}                                                                                                   \\ \cline{3-6} 
\multicolumn{1}{|c|}{}                   &                    &                         \begin{tabular}[c]{@{}l@{}} {DG-N}\end{tabular} & \begin{tabular}[c]{@{}l@{}} \quad~{\qquad DG-C} \end{tabular} & \begin{tabular}[c]{@{}l@{}}~\:{DG-N}\end{tabular} & \begin{tabular}[c]{@{}l@{}}{\qquad DG-C}\end{tabular} \\ \hline
\multirow{5}{*}{2} & 0  & 7.75e-6                      & \qquad~ 7.78e-6    & ~\:  3.26e-4  & \quad ~ 2.60e-4           \\ \cline{2-6} 
& 1  & 7.80e-6     & \qquad ~ 7.82e-6                    &  ~\: 2.47e-4   & \quad ~ 2.63e-4                                                      \\ \cline{2-6} 
& 5  & 7.81e-6                                   
& \qquad ~ 7.82e-6   & ~\: 2.47e-4                                   & \quad ~ 2.61e-4                                                \\ \cline{2-6} 
& 25  & 7.81e-6    & \qquad ~ 7.82e-6                    & ~\: 2.48e-4    & \quad ~ 2.61e-4   \\

\cline{2-6} 
& 125  & 7.81e-6    & \qquad ~ 7.82e-6                    & ~\: 2.48e-4    & \quad ~ 2.61e-4  
\\ \hline
\multirow{5}{*}{3} & 0  & 1.41e-7                      & \qquad ~ 1.42e-7   & ~\: 6.43e-6  & \quad ~ 5.05e-6           \\ \cline{2-6} 
& 1  & 1.44e-7   & \qquad ~ 1.44e-7                    & ~\: 4.20e-6   & \quad ~ 4.17e-6        \\ \cline{2-6} 
& 5 & 1.44e-7   & \qquad ~ 1.45e-7        
& ~\: 4.05e-6    & \quad ~ 4.04e-6                                                    \\ \cline{2-6} 
& 25  & 1.44e-7                                       
& \qquad ~ 1.45e-7      & ~\: 4.02e-6                               & \quad ~ 4.01e-6  
\\
\cline{2-6}
& 125  & 1.44e-7                                    
 & \qquad ~ 1.45e-7     & ~\: 4.02e-6            & \quad ~ 4.01e-6  
\\ \hline
\end{tabular}}
\caption{Comparison between DG-N and DG-C schemes with $\nu=0.025$ and $h_{\text{max}} =0.0884$ for Kovasznay flow and potential flow when $\gamma=0$.}
\label{table_lamda_gd}
\end{center}
\end{table}
% %%%%%%%%%%%%%%%%%%%%%%%%%%%%%%%%%%%%%%%%%%%%%%%%%
%%%%%%%%%%%%%%%%%%%%%%%%%%%%%%%%%%%%%%%%%%%%%%%%%%
\subsection{Lid Driven Flow}
In this section, we consider lid driven flow {for the DG-N scheme} with $Re=100,400$ (which corresponds to $\nu=0.01,0.0025$ respectively in this setting) \cite{ghia1982high} and the square domain $\Omega:=[0,1]^2$. There is a tangential velocity $\bm{u}=(1,0)$ on the top, while no-slip boundary conditions are applied on the other sides. In this test, we use polynomial $k=3$ for the pressure, $\gamma=10$ and $h=0.0283$ (which corresponds to a mesh size of $50\times50$).

Figures~\ref{fig:L_contour} and \ref{fig:L_trace} show the contour and streamlines of the lid driven cavity flow. The important aspect of Figure~\ref{fig:L_trace} is that a small corner vortex at the bottom right corner, which normally requires a very high mesh resolution, has been predicted by our scheme. In Figures \ref{fig:L_Horizontal_Velocity} and \ref{fig:L_Vertical_Velocity} we compare our simulation results with the data reported in \cite{ghia1982high}, we find our simulation results match the data perfectly except at the coordinate point $(0.9063,0.5)$ for vertical velocity; we are not sure if there is a typo in the original data from \cite{ghia1982high} as even in classical fluid finite element book like \cite{ZIENKIEWICZ2014127} (Figure 4.5(b), page 134) this point is ignored when making comparisons.
%%%%%%%%%%%%%%%%%%%%%%%%%%%
\begin{figure}[ht]
\centering
\begin{subfigure}{0.4\textwidth}
  \centering
  \includegraphics[width=1.0\linewidth,trim={7cm 2cm 0cm 2cm},clip]{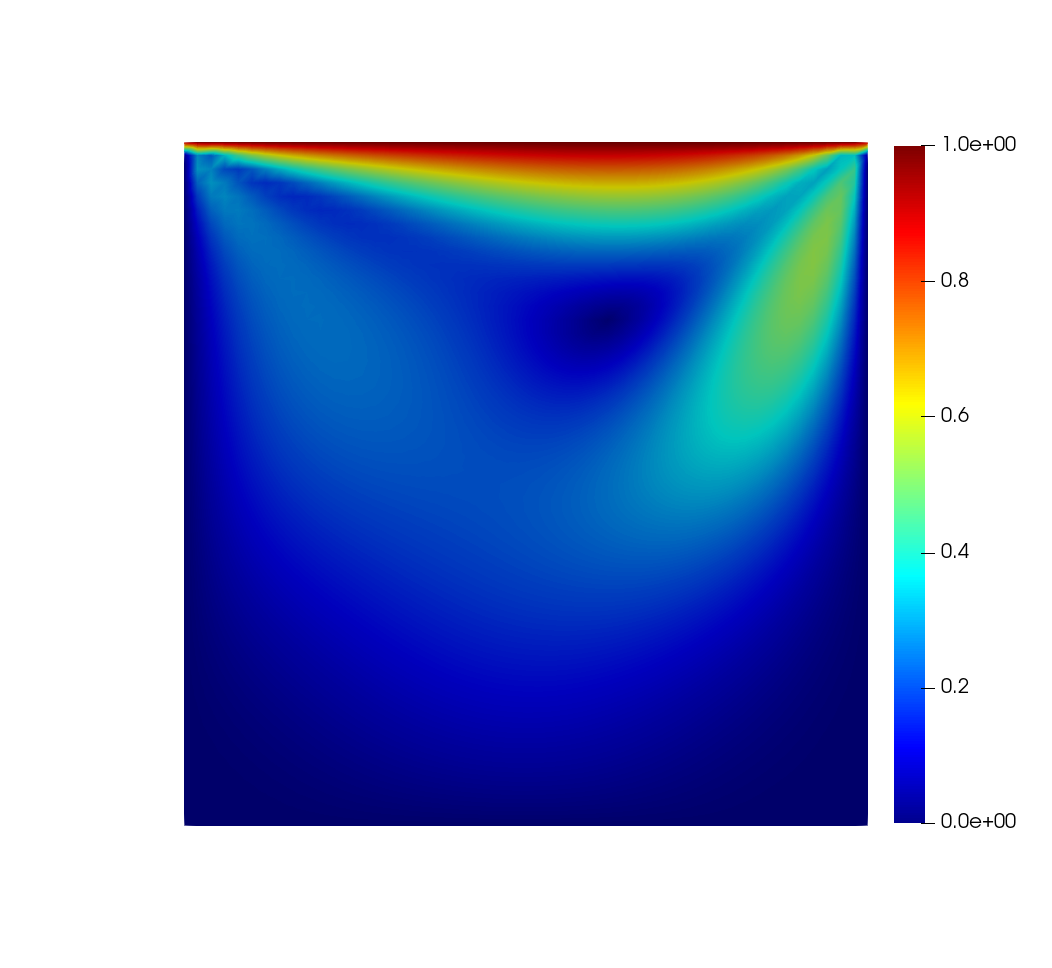}
\end{subfigure}%
\begin{subfigure}{.4\textwidth}
  \centering
  \includegraphics[width=1.0\linewidth,trim={7cm 2cm 0cm 2cm},clip]{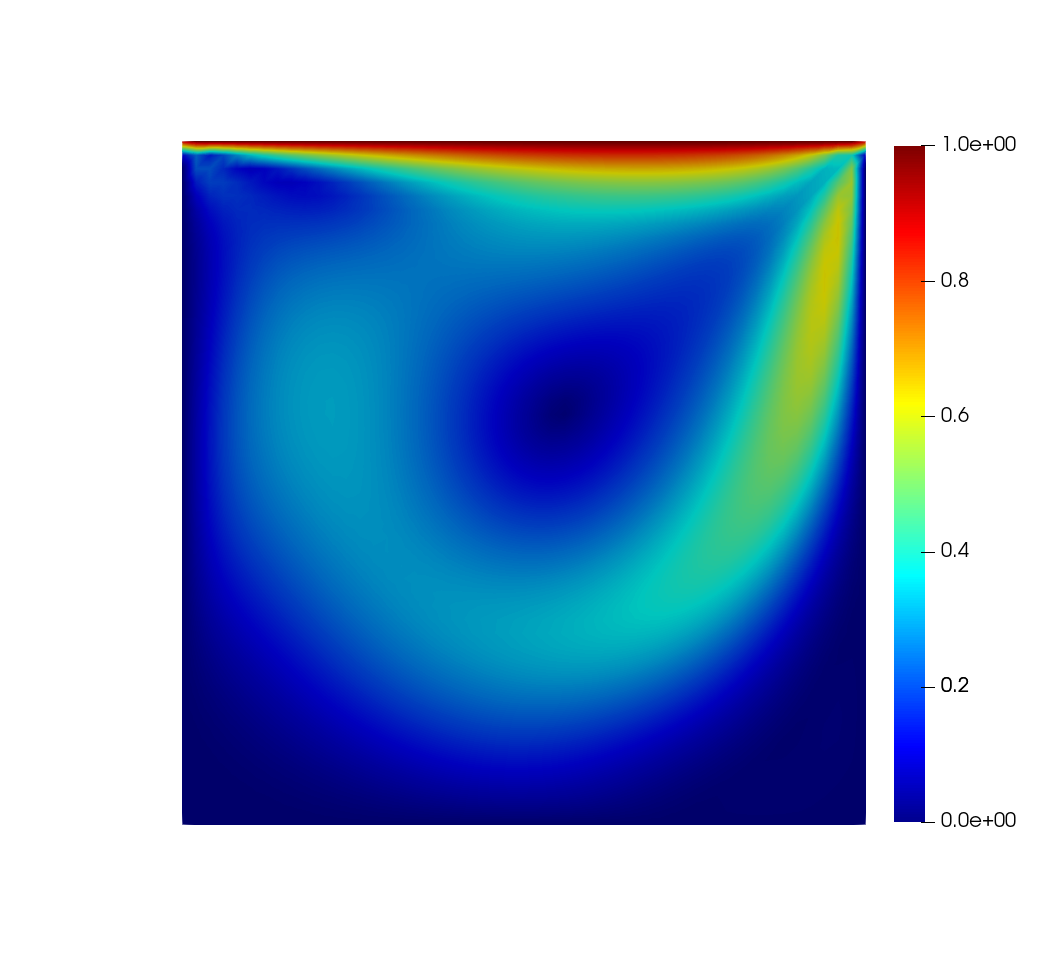}
\end{subfigure}
\caption{Lid Driven Cavity: Contours of velocity magnitude from DG-N with $Re=100$ (left) and $Re=400$ (right) when $p=3$, $h_{\text{max}}=0.0283$ and $\gamma=10$.}
\label{fig:L_contour}
\end{figure}
% %%%%%%%%%%%%%%%%%%%%%%%%%%%
%%%%%%%%%%%%%%%%%%%%%%%%%%%
\begin{figure}[ht]
\centering
\begin{subfigure}{0.4\textwidth}
  \centering
  \includegraphics[width=1.0\linewidth,trim={7cm 2cm 0cm 2cm},clip]{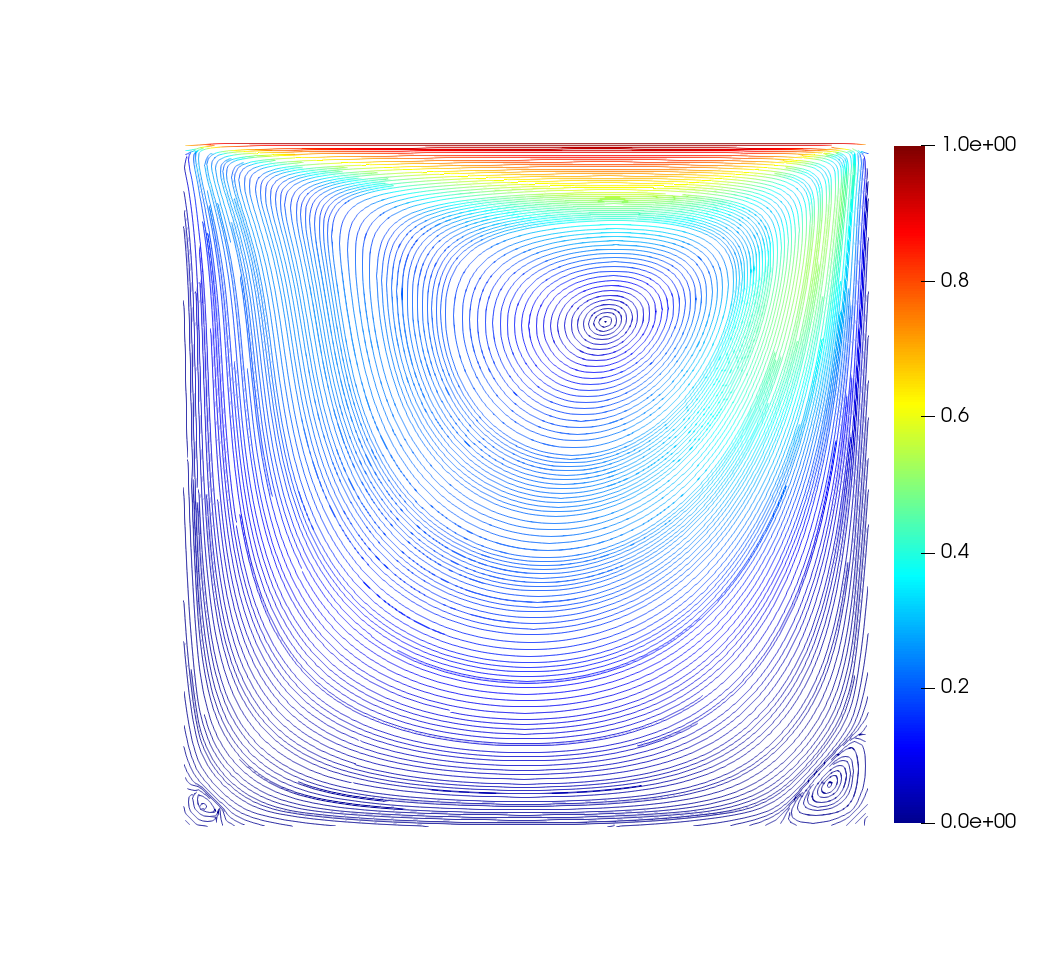}
\end{subfigure}%
\begin{subfigure}{.4\textwidth}
  \centering
  \includegraphics[width=1.0\linewidth,trim={7cm 2cm 0cm 2cm},clip]{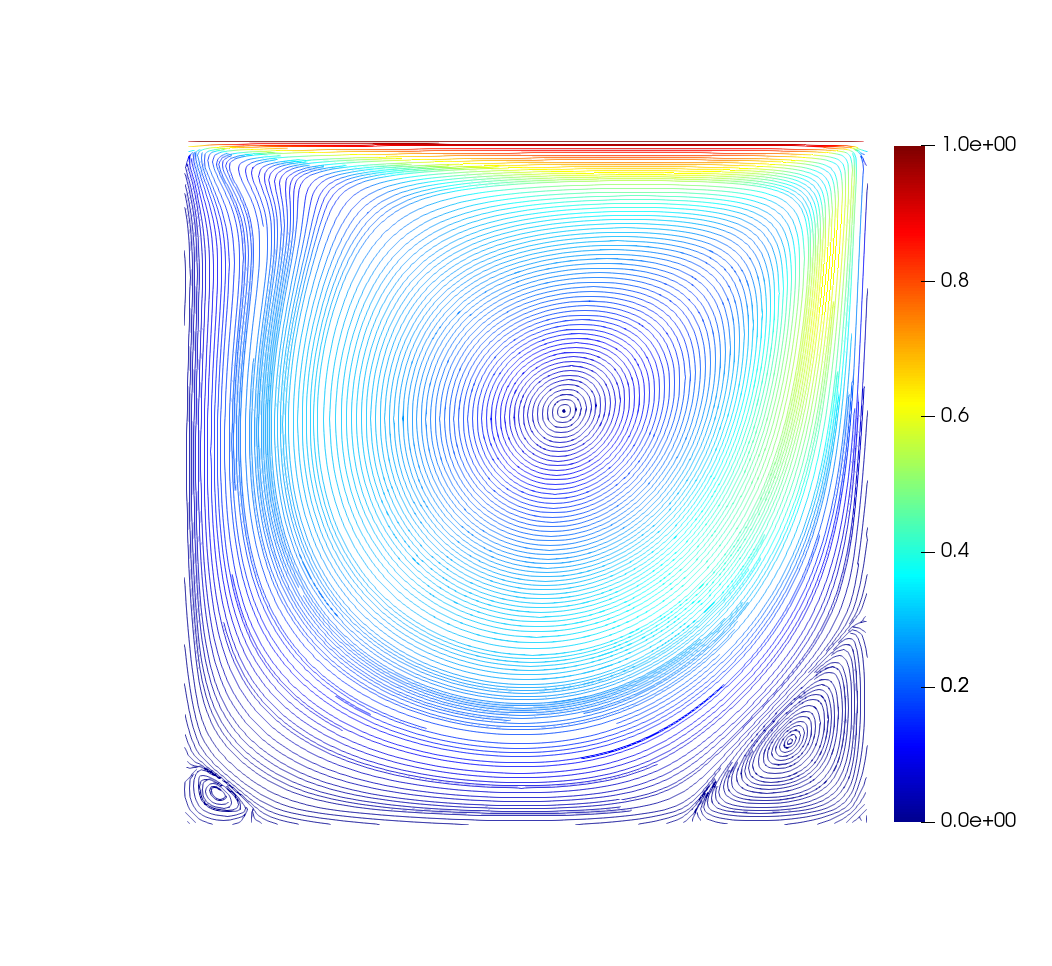}
\end{subfigure}
\caption{Lid Driven Cavity: Stream trace from DG-N with $Re=100$ (left) and $Re=400$ (right) when $k=3$, $h_{\text{max}}=0.0283$ and $\gamma=10$.}
\label{fig:L_trace}
\end{figure}
%%%%%%%%%%%%%%%%%%%%%%%%%%%
%%%%%%%%%%%%%%%%%%%%%%%%%%%
\begin{figure}[ht]
\centering
\begin{subfigure}{0.4\textwidth}
  \centering
  \includegraphics[width=1.0\linewidth,trim={0cm 0cm 0cm 0cm},clip]{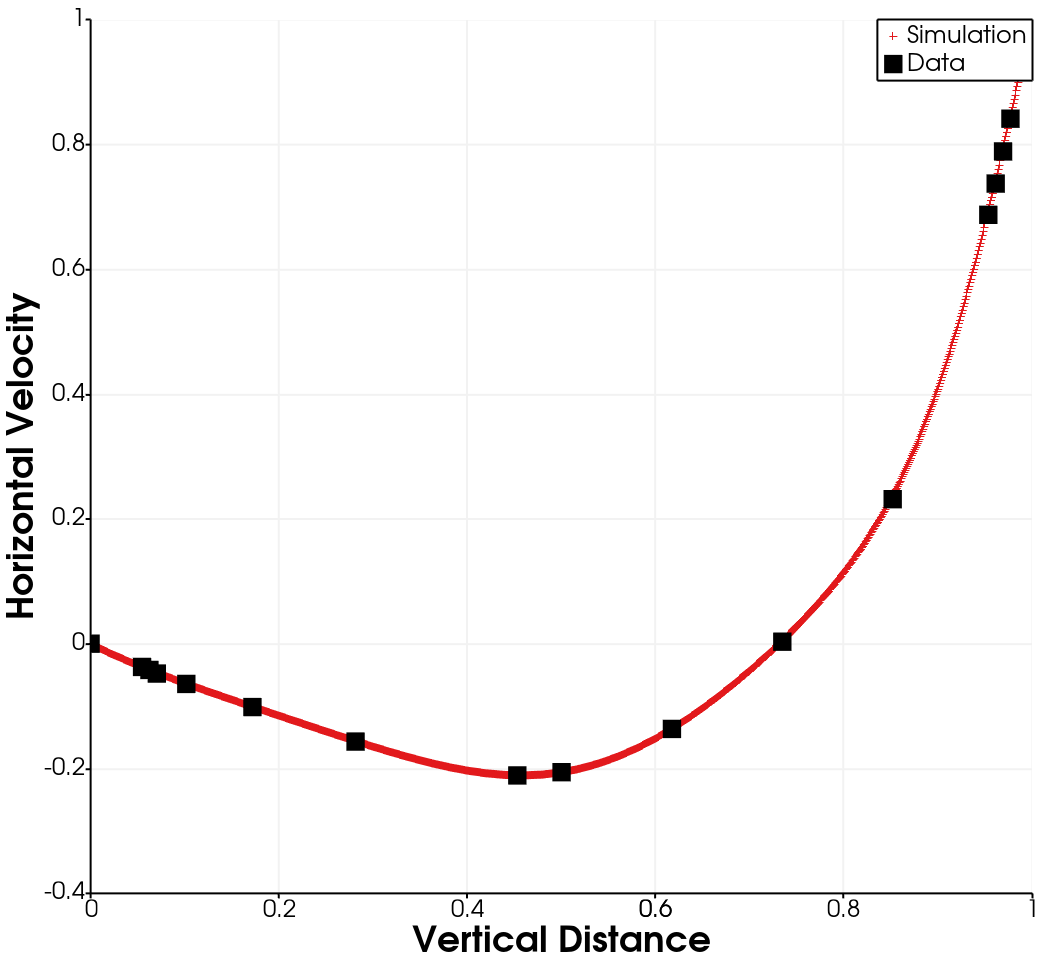}
\end{subfigure}%
\begin{subfigure}{0.4\textwidth}
  \centering
  \includegraphics[width=1.0\linewidth,trim={0cm 0cm 0cm 0cm},clip]{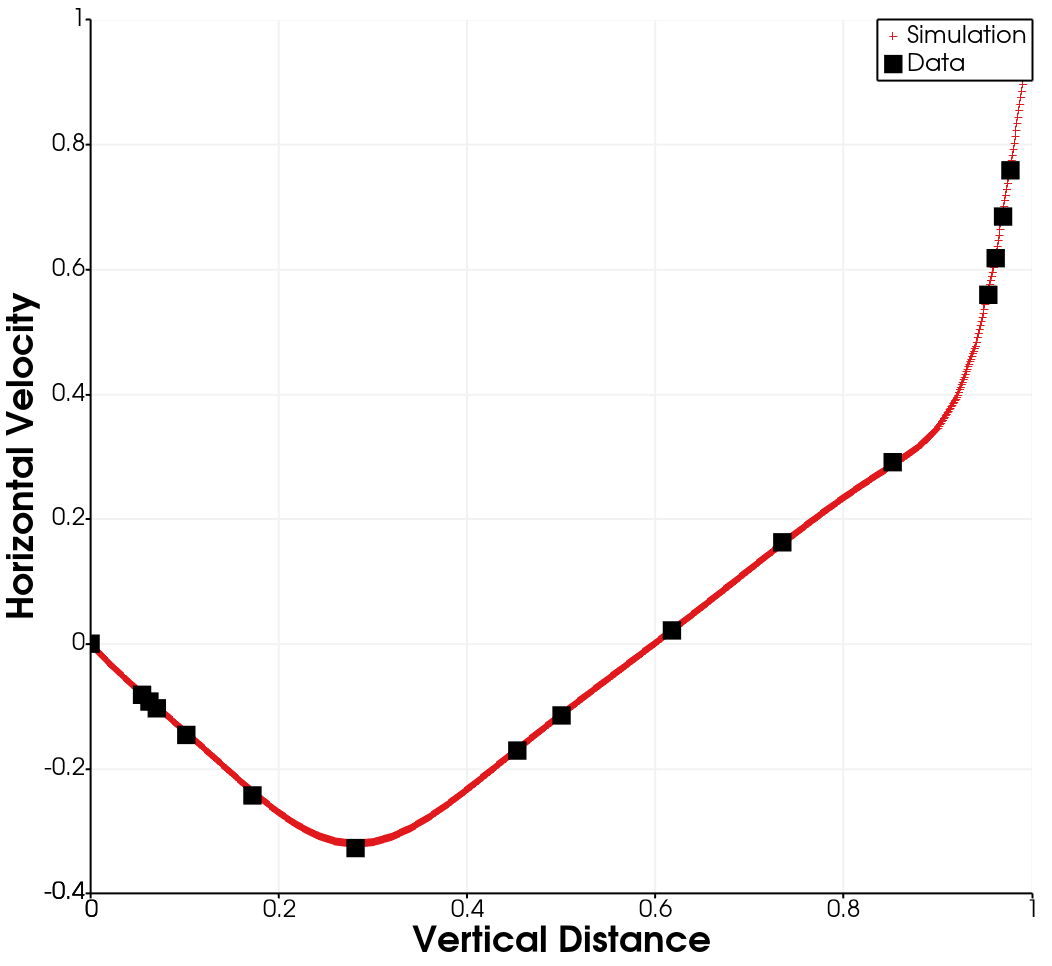}
\end{subfigure}
\caption{Lid Driven Cavity: Horizontal velocity from DG-N with $Re=100$ (left) and $Re=400$ (right) when $k=3$, $h_{\text{max}}=0.0283$, $\gamma=10$ {at $x_1=0.5$ (with $x_2$ varying)}}
\label{fig:L_Horizontal_Velocity}
\end{figure}
%%%%%%%%%%%%%%%%%%%%%%%%%%%
%%%%%%%%%%%%%%%%%%%%%%%%%%%
\begin{figure}[ht]
\centering
\begin{subfigure}{0.4\textwidth}
  \centering
  \includegraphics[width=1.0\linewidth,trim={0cm 0cm 0cm 0cm},clip]{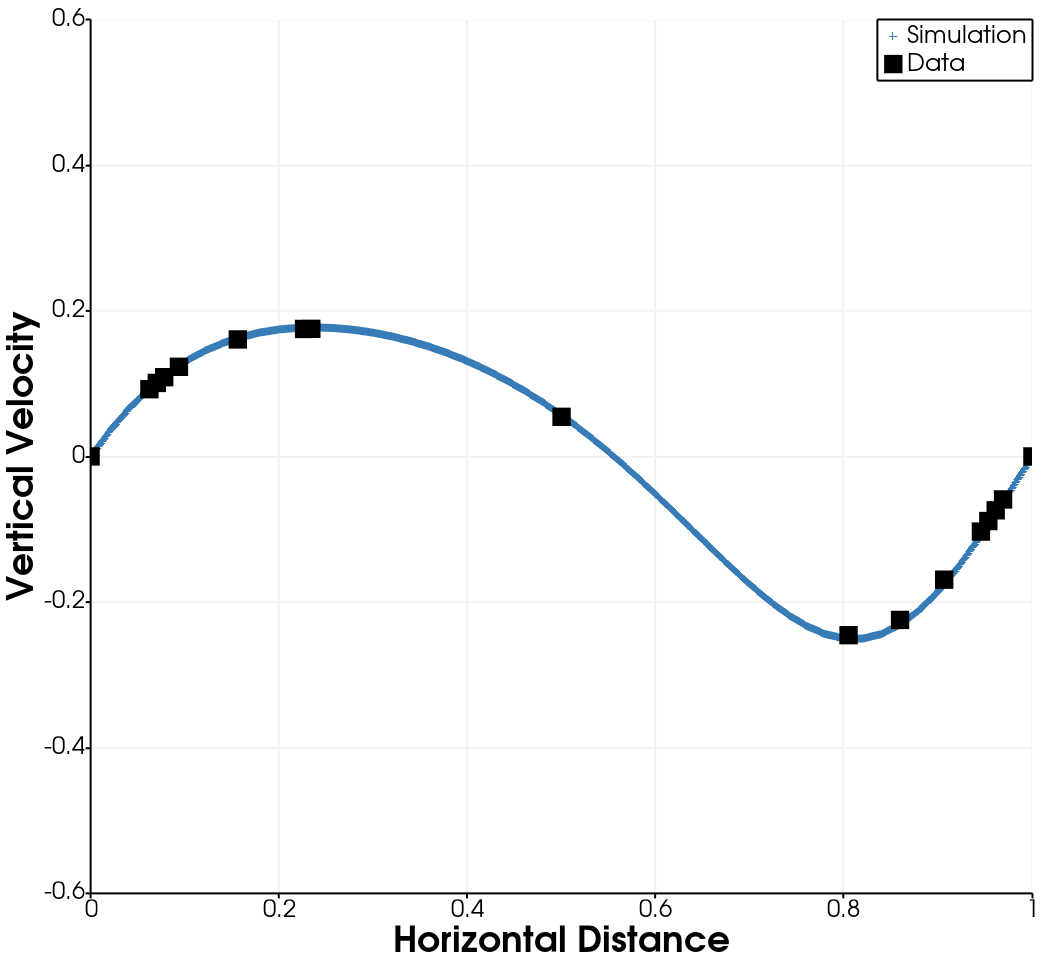}
\end{subfigure}%
\begin{subfigure}{0.4\textwidth}
  \centering
  \includegraphics[width=1.0\linewidth,trim={0cm 0cm 0cm 0cm},clip]{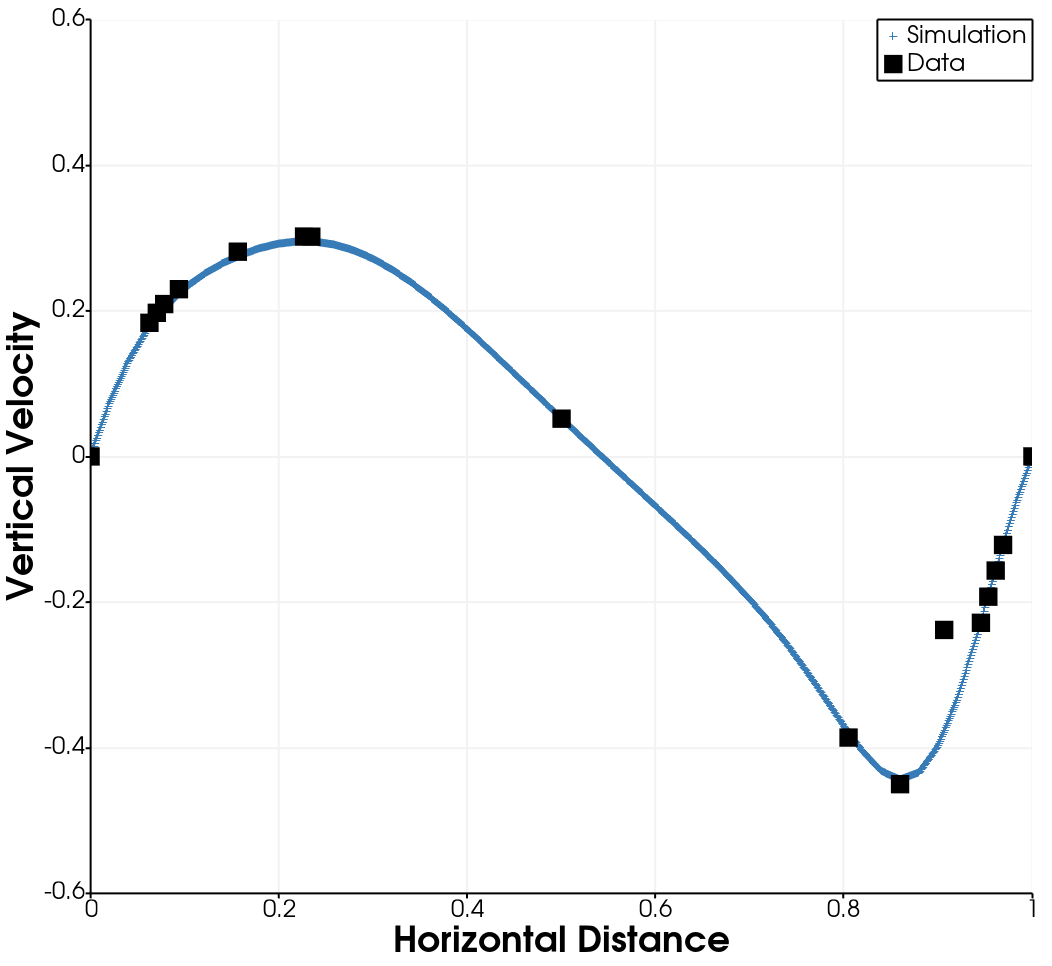}
\end{subfigure}
\caption{Lid Driven Cavity: Vertical velocity from DG-N with $Re=100$ (left) and $Re=400$ (right) when $k=3$, $h_{\text{max}}=0.0283$, $\gamma=10$ {at $x_2=0.5$ (with $x_1$ varying)}}
\label{fig:L_Vertical_Velocity}
\end{figure}
%%%%%%%%%%%%%%%%%%%%%%%%%%%
\begin{figure}
      \includegraphics[width=1.0\textwidth,trim={0cm 12cm 0cm 10cm},clip]{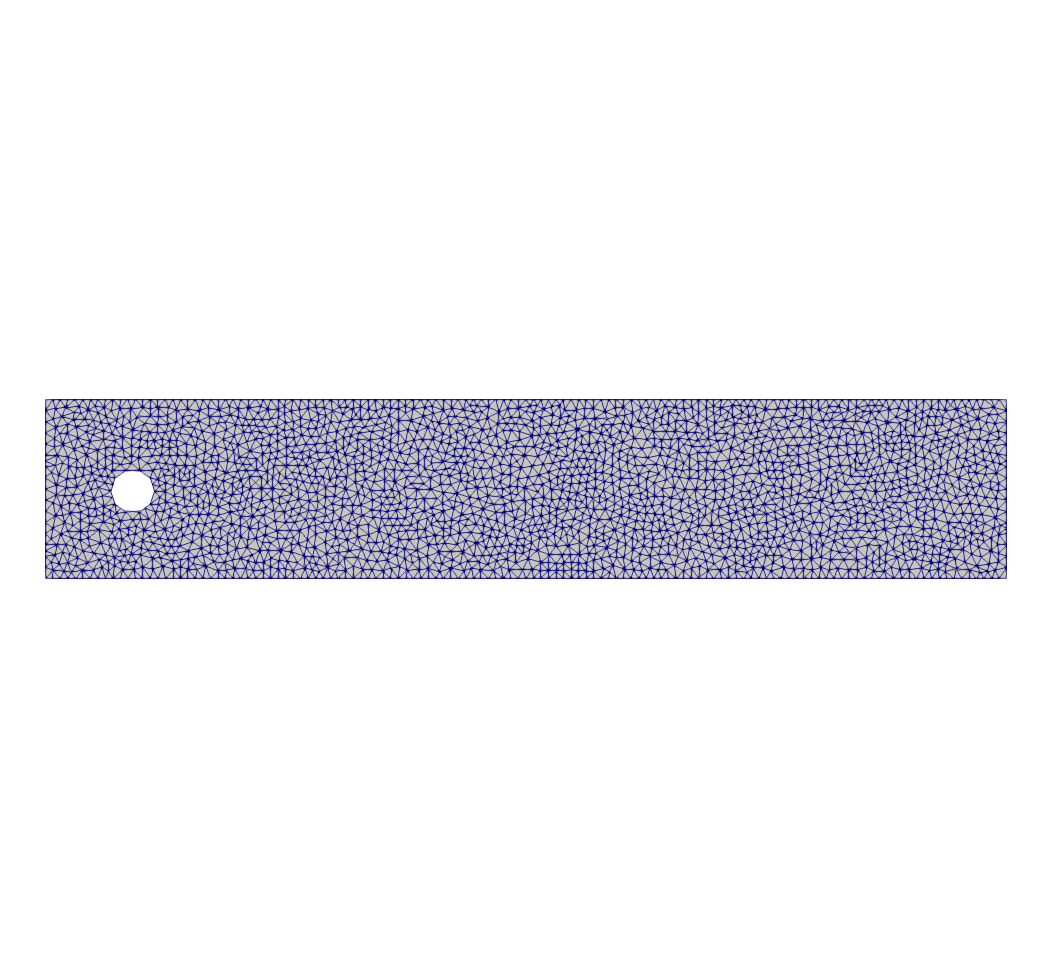}
      \vspace{4pt}
  \caption{Mesh for flow around  a cylinder}
  \label{cylinder_mesh}
\end{figure}
%%%%%%%%%%%%%%%%%%%%%%%%%
\clearpage
%%%%%%%%%%%%%%%%%%%%%%%%%
\begin{figure}
      \includegraphics[width=1.0\textwidth,trim={0cm 12cm 0cm 10cm},clip]{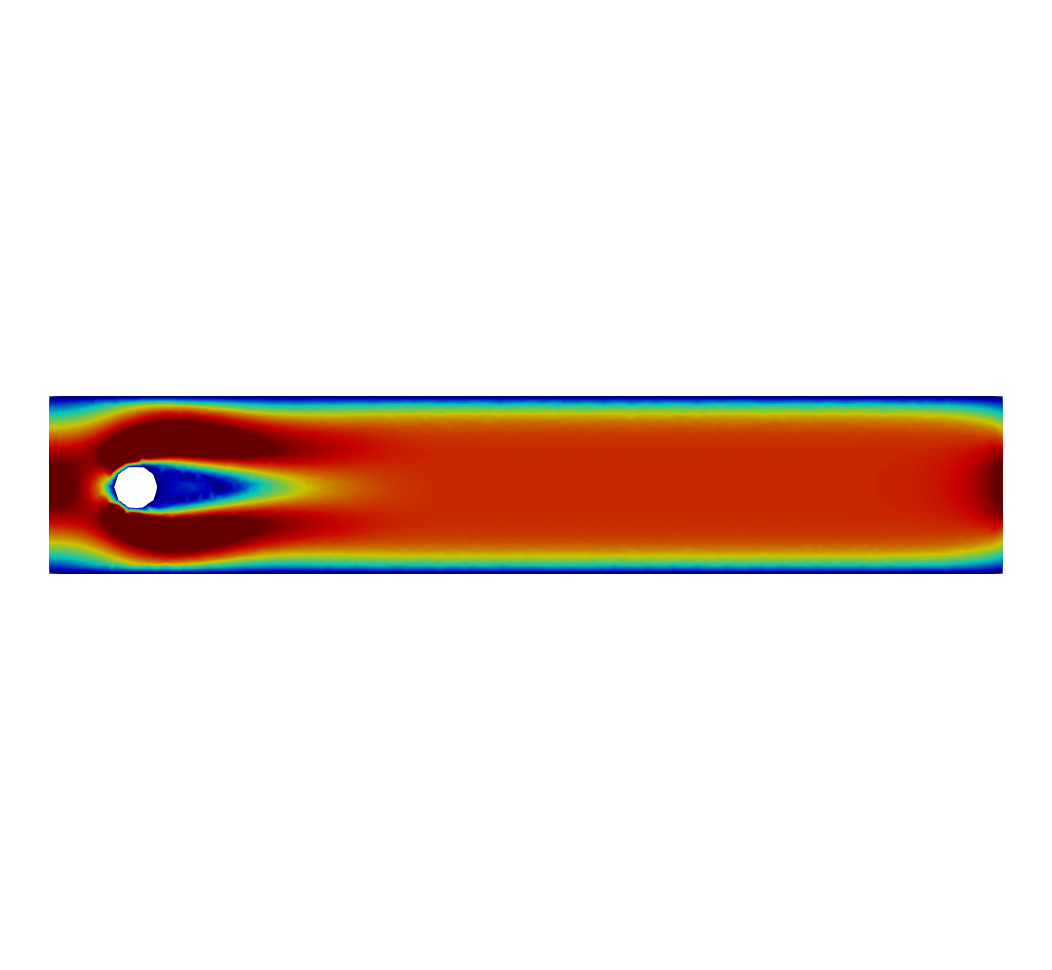}
      \includegraphics[width=1.0\textwidth,trim={0cm 12cm 0cm 10cm},clip]{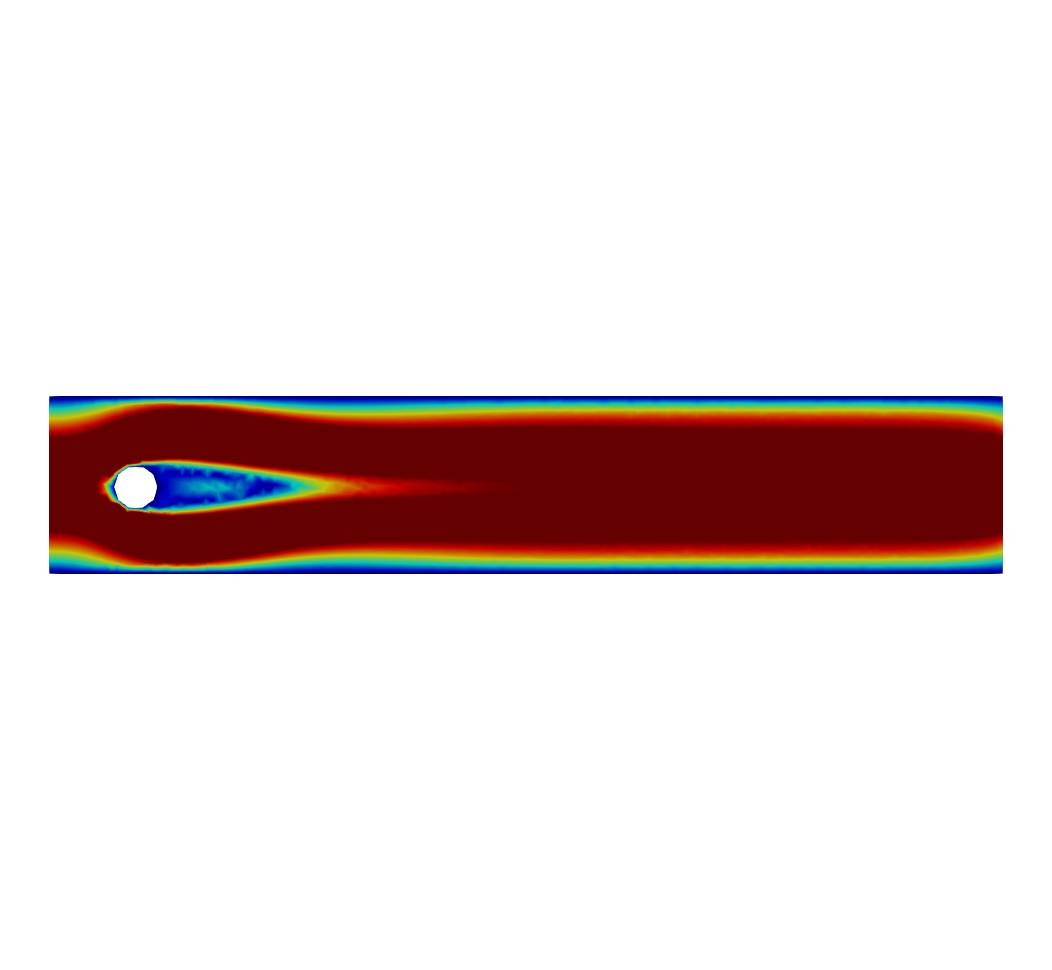}
      \includegraphics[width=1.0\textwidth,trim={0cm 12cm 0cm 10cm},clip]{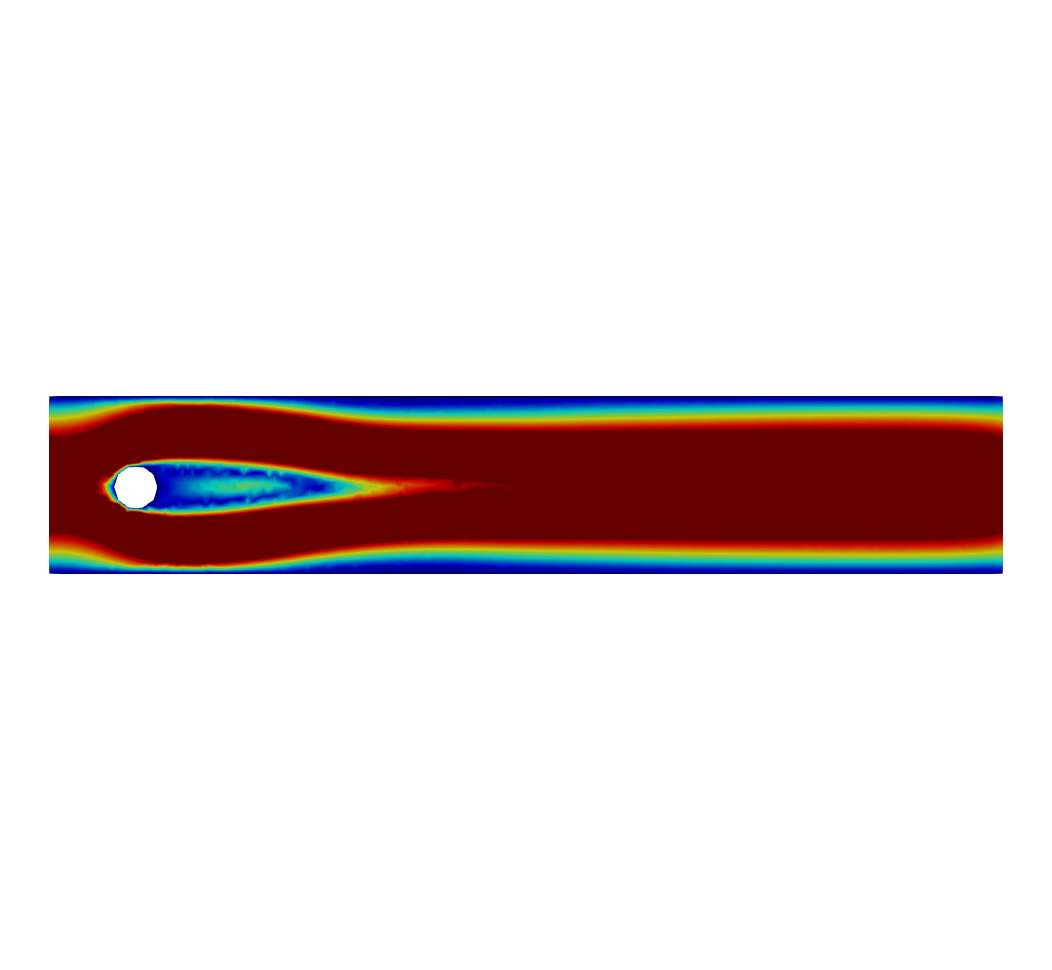}
      \includegraphics[width=1.0\textwidth,trim={0cm 12cm 0cm 10cm},clip]{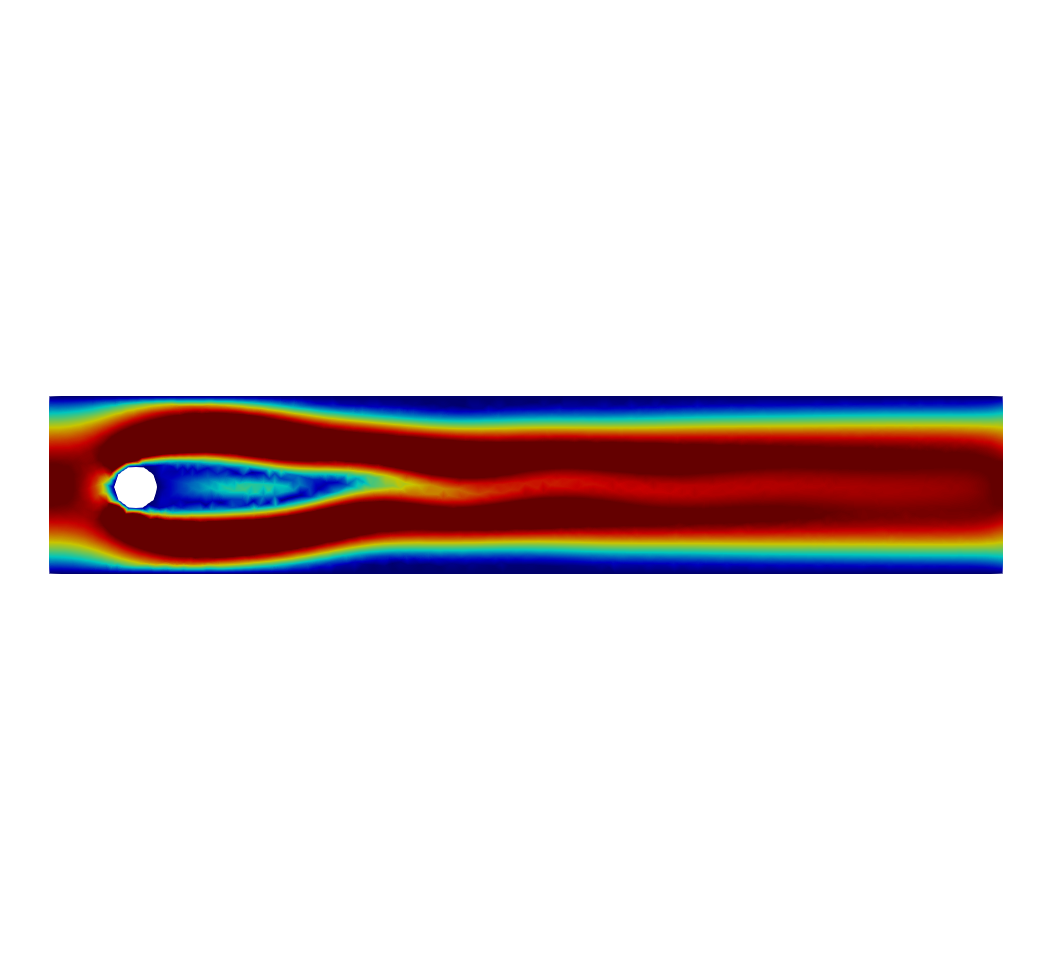}
  \caption{Contour of velocity magnitude when $t=2s, 3s, 5s, 6s$ from top to bottom---BDF2 time discretization}
  \label{DG_BDF2_cylinder_velocity_contour}
\end{figure}
%%%%%%%%%%%%%%%%%%%%%%%%%
\begin{figure}
      \includegraphics[width=1.0\textwidth,trim={0cm 12cm 0cm 10cm},clip]{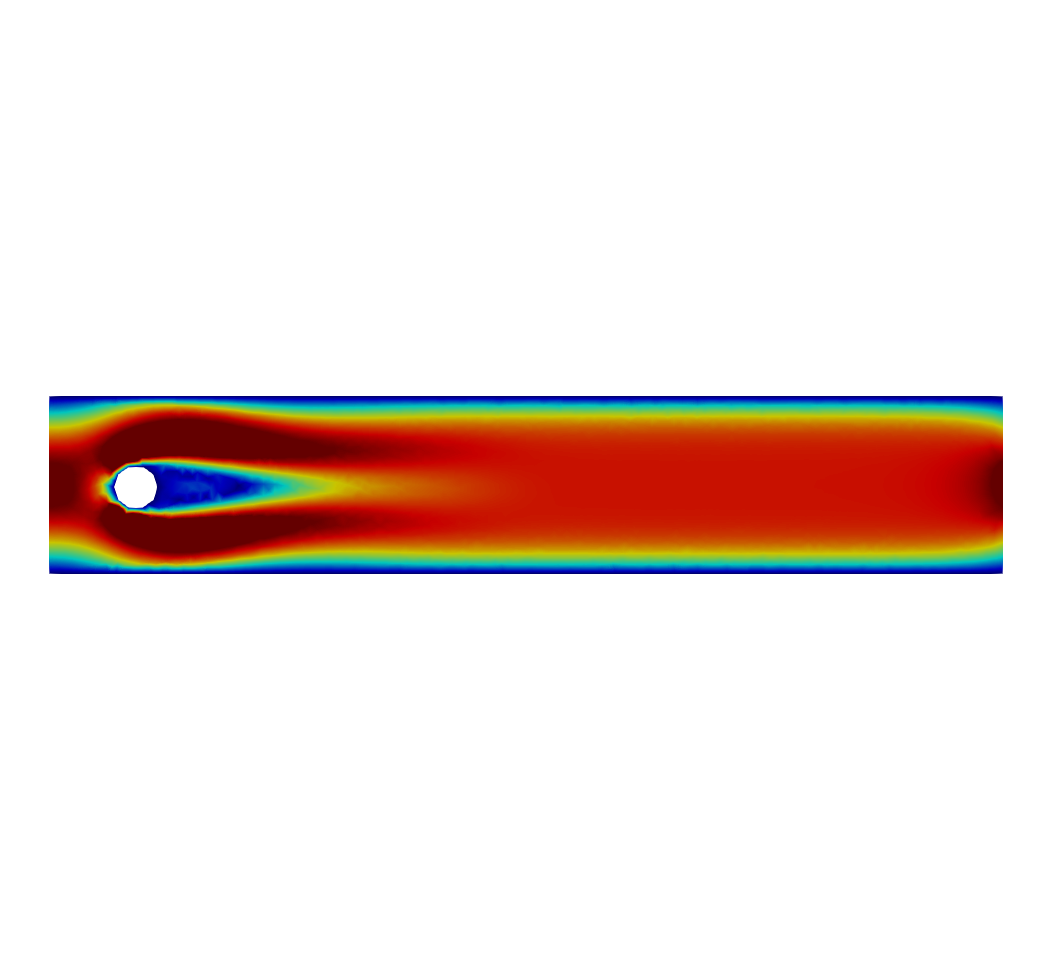}
      \includegraphics[width=1.0\textwidth,trim={0cm 12cm 0cm 10cm},clip]{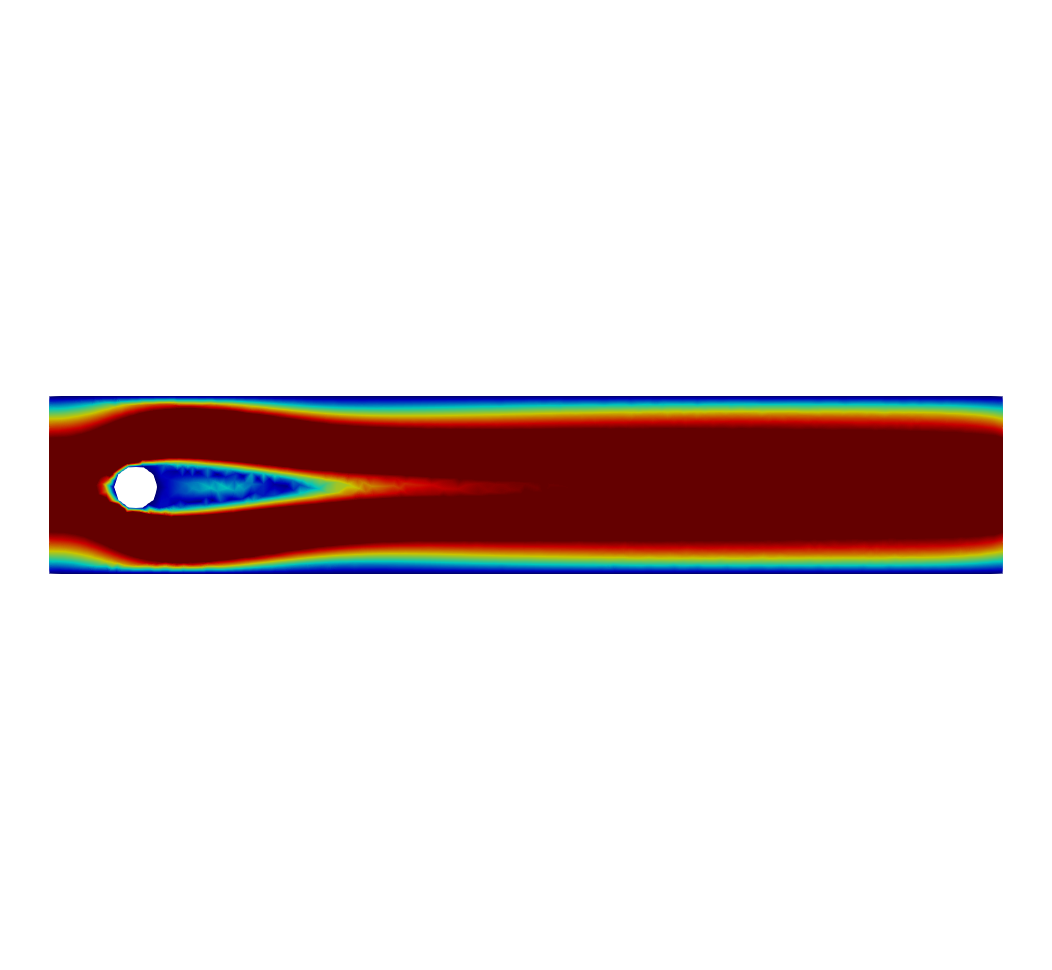}
      \includegraphics[width=1.0\textwidth,trim={0cm 12cm 0cm 10cm},clip]{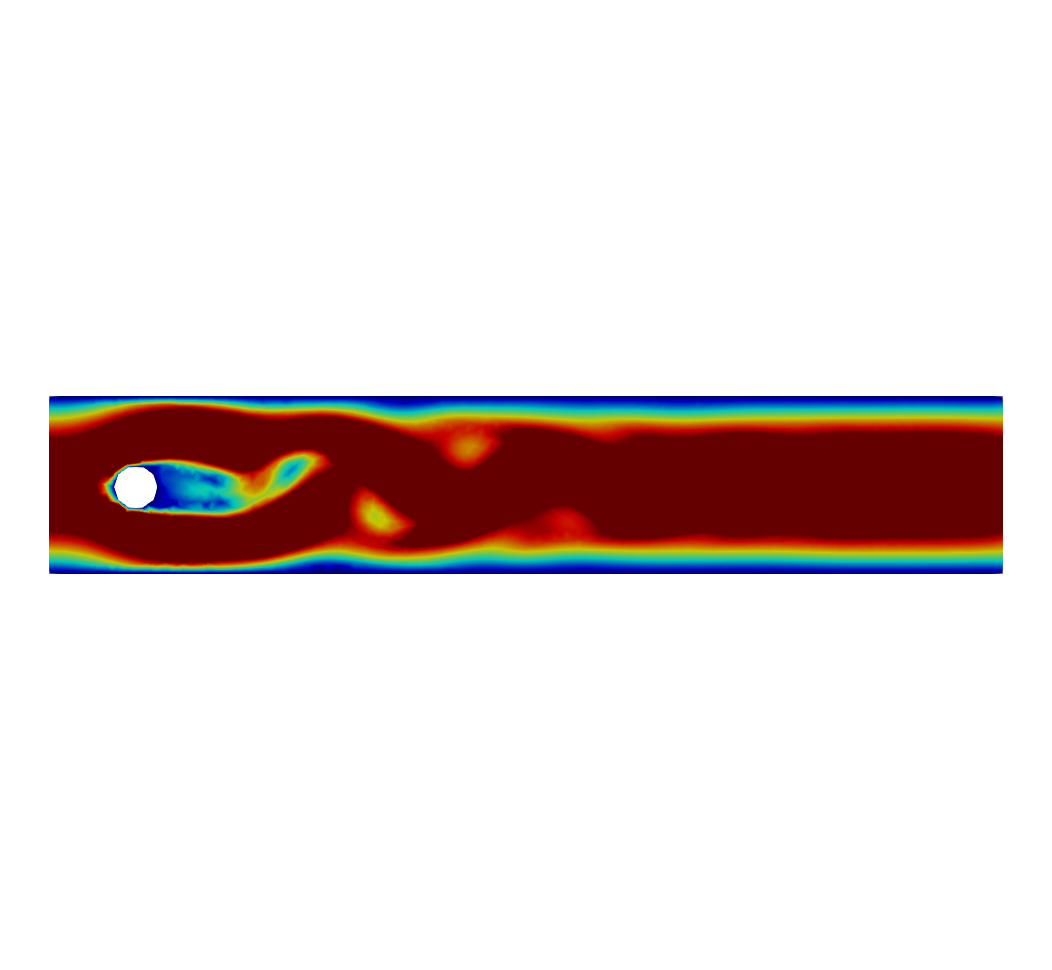}
      \includegraphics[width=1.0\textwidth,trim={0cm 12cm 0cm 10cm},clip]{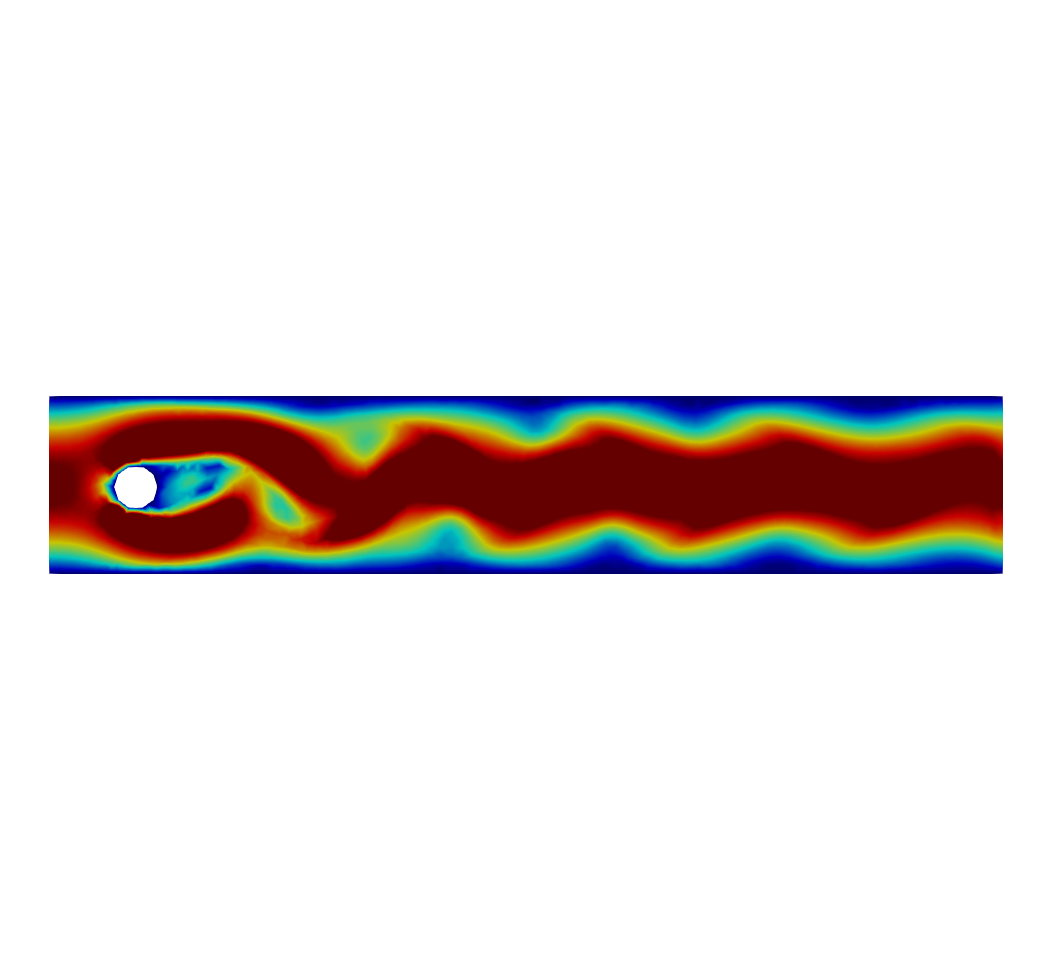}
      \vspace{4pt}
  \caption{Contour of velocity magnitude when $t=2s, 3s, 5s, 6s$ from top to bottom---Crank--Nicolson time discretization}
  \label{cylinder_velocity_contour}
\end{figure}
%%%%%%%%%%%%%%%%%%%%%%%%%
\clearpage
%%%%%%%%%%%%%%%%%%%%%%%%
\subsection{Flow around a cylinder}
%%%%%%%%%%%%%%%%%%%%%%%%
In the last example, we follow \cite{layton2009accuracy,schafer1996benchmark,LangtangenLogg2017} and consider the flow over a cylinder with our DG-N scheme when $\gamma=10$ using both BDF2 and Crank--Nicolson time discretization. We follow the example in \cite{LangtangenLogg2017} and use the FEniCS \textsf{mshr} tool to generate a fixed mesh, see Figure~\ref{cylinder_mesh}. Note that the BDF2 method is not necessarily stability preserving. The simulation domain $\Omega:=\left([0,2.2]\times[0,0.41]\right)\backslash B$, where $B$ is the disk centered at $(0.2,0.2)$ with radius $0.05$. The simulation time interval is $[0,T]$ with $T=8s$ and the time step $\tau=0.01.$ A primary feature of this benchmark is the formation of von K\'arm\'an vortex street. Our goal is to study the influence of time discretization on the formulation of the vortex. The inflow and outflow profile is given (cf.~\cite{layton2009accuracy}) as
%%%%%%%%%%%%%%%%%%%%%%%%
\begin{align}
    &\bm{u}(t,0,x_2)=\bm{u}(t,2.2,x_2)=\frac{6}{0.41^{2}}\sin(\pi t/8)x_2(0.41-x_2),\\
    &\bm{v}(t,0,x_2)=\bm{v}(t,2.2,x_2)=\bm{0},
\end{align}
and the boundary condition on the rest of $\partial\Omega$ is set to be $\bm{u}=\bm{0}.$
%%%%%%%%%%%%%%%%%%%%%
The Reynolds number $Re$ corresponding to the mean velocity inflow ranges from $0$ to $100$. From Figures~\ref{DG_BDF2_cylinder_velocity_contour} and \ref{cylinder_velocity_contour}, we observe that the vortex forms gradually over time when Crank--Nicolson is used for the time discretization, while there is no apparent vortex formulation when using BDF2, which indicates the importance of using stability preserving time discretization. 
%%%%%%%%%%%%%%%%%%%%
%%%%%%%%%%%%%%%%%%%%%
\section{Conclusion}\label{conclusion}
We have developed a general framework including the $H^{1}$, $H(\text{div})$ and DG methods with the use of different stress tensors. We proved the stability in general and discussed the expressions for penalty terms for each of the three cases. For Taylor--Green vortex and Kovasznay flow, our DG schemes are comparable to classical schemes in the literature, while the $H^{1}$ scheme from the framework is less accurate but with much less runtime with the Taylor-Green vortex. The $H(\text{div})$ scheme has the longest runtime among four schemes for Taylor-Green vortex implemented in FEniCS. We also show through examples that penalizing normal component of the velocity in DG schemes may fail to decrease absolute errors. In general, we are not able to demonstrate it rigorously and the choice of $\gamma$ is  empirical. In addition, we show that our DG scheme agrees very well with the features and data of lid driven flow. Finally, the importance of stability preserving time discretization has been shown by comparing the BDF2 and Crank--Nicolson scheme for the flow around a cylinder.
%%%%%%%%%%%%%%%%%%%%

% \section*{References}

%\bibliography{mybibfile}
%%%%%%%%%%%%%%%%%%%%%%%%%%%%%%%%%%%%%%%%%%%%%%%%%%%%%%%%%%%%%%%%%%%%%%
\providecommand{\bysame}{\leavevmode\hbox to3em{\hrulefill}\thinspace}
\providecommand{\MR}{\relax\ifhmode\unskip\space\fi MR }
% \MRhref is called by the amsart/book/proc definition of \MR.
\providecommand{\MRhref}[2]{%
  \href{http://www.ams.org/mathscinet-getitem?mr=#1}{#2}
}
\providecommand{\href}[2]{#2}

\end{document}